\newtheoremstyle{exampstyle}
  {\topsep 3pt} 
  {\topsep 3pt} 
  {} 
  {} 
  {\bfseries} 
  {.} 
  {.5em} 
  {} 
\theoremstyle{exampstyle} \newtheorem{assumption}{Assumption}
\newtheorem{theorem}{Theorem}
\newtheorem{lemma}{Lemma}
\newtheorem{proposition}{Proposition}
\let\Algorithm\algorithm
\renewcommand\algorithm[1][]{\Algorithm[#1]\setstretch{2}}
\newcommand{\beginsupplement}{%
        \setcounter{lemma}{0}
        \renewcommand{\thelemma}{S\arabic{lemma}}%
        \setcounter{equation}{0}
        \renewcommand{\theequation}{S\arabic{equation}}%
        \setcounter{section}{0}
        \renewcommand{\thesection}{S\arabic{section}}%
     }
\title{{\LARGE{\bf Distributed Deep Learning with Event-Triggered Communication}}}
\author{Jemin George\textsuperscript{\rm 1}, Prudhvi Gurram\textsuperscript{\rm 1, \rm 2}\\
\textsuperscript{\rm 1}CCDC Army Research Laboratory\\ 
Adelphi, MD 20783\\
jemin.george.civ@mail.mil \\ 
\textsuperscript{\rm 2}Booz Allen Hamilton\\ 
McLean, VA 22102\\
gurram\_prudhvi@bah.com 
}
\begin{document}

\maketitle

\begin{abstract}
 We develop a Distributed Event-Triggered Stochastic GRAdient Descent (DETSGRAD) algorithm for solving non-convex optimization problems typically encountered in distributed deep learning. We propose a novel communication triggering mechanism that would allow the networked agents to update their model parameters aperiodically and provide sufficient conditions on the algorithm step-sizes that guarantee the asymptotic mean-square convergence. The algorithm is applied to a distributed supervised-learning problem, in which a set of networked agents collaboratively train their individual neural networks to recognize handwritten digits in images, while aperiodically sharing the model parameters with their one-hop neighbors. Results indicate that all agents report similar performance that is also comparable to the performance of a centrally trained neural network, while the event-triggered communication provides significant reduction in inter-agent communication. Results also show that the proposed algorithm allows the individual agents to recognize the digits even though the training data corresponding to all the digits are not locally available to each agent.
\end{abstract}

\section{Introduction}\label{sec:Intro}

With the advent of smart devices, there has been an exponential growth in the amount of data collected and stored locally on individual devices. Applying machine learning to extract value from such massive data to provide data-driven insights, decisions, and predictions has been a popular research topic as well as the focus of numerous businesses. However, porting these vast amounts of data to a data center to conduct traditional machine learning has raised two main issues: (i) the communication challenge associated with transferring vast amounts of data from a large number of devices to a central location and (ii) the privacy issues associated with sharing raw data. Distributed machine learning techniques based on the server-client architecture~\citep{Mu2014, NIPS2014_5597, ICCCN2017} have been proposed as solutions to this problem. On one extreme end of this architecture, we have the \emph{parameter server} approach, where a server or group of servers initiate distributed learning by pushing the current model to a set of client nodes that host the data. The client nodes compute the local gradients or parameter updates and communicate them to the server nodes. Server nodes aggregate these values and update the current model~\citep{Zhang2018, NIPS2014_5597}. On the other extreme, we have \emph{federated learning}, where each client node obtains a local solution to the learning problem and the server node computes a global model by averaging the local models~\citep{Jakub2016, McMahan2017}. Besides the server-client architecture, a \emph{shared-memory} (multicore/multiGPU) architecture, where different processors independently compute the gradients and update the global model parameter using a shared memory, has also been proposed as a solution to the distributed machine learning problem~\citep{NIPS2011_4390, NIPS2015_5717, NIPS2015_6031, Feyzmahdavian2016}. However, none of the above-mentioned learning techniques are truly distributed since they follow a master-slave architecture and do not involve any peer-to-peer communication. Furthermore, these techniques are not always robust and they are rendered useless if the master/server node or the shared-memory fails. Therefore, we aim to develop a fully distributed machine learning architecture enabled by client-to-client interaction.

For large-scale machine learning, stochastic gradient descent (SGD) methods are often preferred over batch gradient methods~\citep{Bottou2018SIAM} because (i) in many large-scale problems, there is a good deal of redundancy in data and therefore it is inefficient to use all the data in every optimization iteration, (ii) the computational cost involved in computing the batch gradient is much higher than that of the stochastic gradient, and (iii) stochastic methods are more suitable for online learning where data are arriving sequentially. Since most machine learning problems are non-convex, there is a need for distributed stochastic gradient methods for non-convex problems. Therefore, here we present a communication efficient, distributed stochastic gradient algorithm for non-convex problems and demonstrate its utility for distributed machine learning.

\section{Related work}

\subsection{Distributed Non-Convex Optimization} 

A few early examples of (non-stochastic or deterministic) distributed non-convex optimization algorithms include the Distributed Approximate Dual Subgradient (DADS) Algorithm \citep{Zhu2013}, NonconvEx primal-dual SpliTTing (NESTT) algorithm \citep{NESTT2016}, and the Proximal Primal-Dual Algorithm (Prox-PDA) \citep{pmlr-v70-hong17a}. More recently, a non-convex version of the accelerated distributed augmented Lagrangians (ADAL) algorithm is presented in \citet{Chatzipanagiotis2017} and successive convex approximation (SCA)-based algorithms such as iNner cOnVex Approximation (NOVA) and in-Network succEssive conveX approximaTion algorithm (NEXT) are given in \citet{Scutari2017} and \citet{Lorenzo2016}, respectively. References \citep{Hong2018, Guo2017, Hong2016SIAM} provide several distributed alternating direction method of multipliers (ADMM) based non-convex optimization algorithms. Non-convex versions of Decentralized Gradient Descent (DGD) and Proximal Decentralized Gradient Descent (Prox-DGD) are given in \citet{Zeng2018TSP}. Finally, Zeroth-Order NonconvEx (ZONE) optimization algorithms for mesh network (ZONE-M) and star network (ZONE-S) are presented in \citet{Hajinezhad2019}. However, almost all aforementioned \emph{consensus optimization} algorithms focus on non-stochastic problems and are extremely communication heavy because they require constant communication among the agents.

\subsection{Distributed Convex SGD} 

Within the consensus optimization literature, there exist several works on distributed stochastic gradient methods, but mainly for strongly convex optimization problems. These include the stochastic subgradient-push method for distributed optimization over time-varying directed graphs  given in \citet{Nedic2016}, distributed stochastic optimization over random networks given in \citet{2018arXiv180307836J}, the Stochastic Unbiased Curvature-aided Gradient (SUCAG) method given in \citet{SUCAG}, and distributed stochastic gradient tracking methods \citet{Pu2018ArxiV}. There are very few works on distributed stochastic gradient methods for non-convex optimization~\citep{Tatarenko2017,Bianchi2013}; however, the push-sum algorithm given in \citet{Tatarenko2017} assumes there are no saddle-points and it often requires up to 3 times as many internal variables as the proposed algorithm. Compared to \citet{Bianchi2013} and \citet{Tatarenko2017}, the proposed algorithm provides an explicit consensus rate and allows the parallel execution of the consensus communication and gradient computation steps.

\subsection{Parallel SGD} 

There exist numerous asynchronous SGD algorithms aimed at parallelizing the data-intensive machine learning tasks. The two popular asynchronous parallel implementations of SGD are the computer network implementation originally proposed in \citet{NIPS2011_4247} and the shared memory implementation introduced in \citet{NIPS2011_4390}. Computer network implementation follows the master-slave architecture and \citet{NIPS2011_4247} showed that for smooth convex problems, the delays due to asynchrony are asymptotically negligible. \citet{Feyzmahdavian2016} extend the results in \citet{NIPS2011_4247} for regularized SGD.  Extensions of the computer network implementation of asynchronous SGD with variance reduction and polynomially growing delays are given in \citet{2016arXiv160403584H} and \citet{pmlr-v80-zhou18b}, respectively. \citet{NIPS2011_4390} proposed a lock-free asynchronous parallel implementation of SGD on a shared memory system and proved a sublinear convergence rate for strongly convex smooth objectives. The lock-free algorithm, HOGWILD!, proposed in \citet{NIPS2011_4390} has been applied to PageRank approximation \citep{Mitliagkas2015}, deep learning~\citep{noel2014dogwild}, and recommender systems~\citep{6413853}. In \citet{NIPS2013_4939}, authors extended the HOGWILD! algorithm to a dual averaging algorithm that works for non-smooth, non-strongly convex problems with sparse gradients. An extension of HOGWILD! called BUCKWILD! is introduced in \citet{NIPS2015_5717} to account for quantization errors introduced by fixed-point arithmetic. In \citet{NIPS2015_6031}, the authors show that because of the noise inherent to the sampling process within SGD, the errors introduced by asynchrony in the shared-memory implementation are asymptotically negligible. A detailed comparison of both computer network and shared memory implementation is given in \citet{2015arXiv150608272L}. Again, the aforementioned asynchronous algorithms are not distributed since they rely on a shared-memory or central coordinator.

\subsection{Decentralized SGD} 

Recently, numerous \emph{decentralized SGD} algorithms for non-convex optimization have been proposed as a solution to the communication bottleneck often encountered in the server-client architecture~\citep{LianNIPS2017,JiangNIPS2017,Tang2018ICML,Lian2018ICML,Wang2018arXiv,Haddadpour2019ICML,Mahmoud2019ICML,Wang2019arXiv}. However almost all these works primarily focus on the performance of the algorithm during a fixed time interval, and the constant algorithm step-size, which often depends on the final time, is selected to speed-up the convergence rate. These SGD algorithms with constant step-size can only guarantee convergence to some $\epsilon$-ball of the stationary point. Furthermore, most of the aforementioned decentralized SGD algorithms provide convergence rates in terms of the average of all local estimates of the global minimizer without ever proving a similar or faster consensus rate. In fact, most decentralized SGD algorithms can only provide bounded consensus and they require a centralized averaging step after running the algorithm until the final-time~\cite{LianNIPS2017,Tang2018ICML,Lian2018ICML,Haddadpour2019ICML,Wang2019arXiv}. Finally, most application of decentralized SGD focus on distributed learning scenarios where the data is distributed identically across all agents.

\subsection{Contribution} 

Currently, there exists no distributed SGD algorithm for the non-convex problems that doesn't require constant or periodic communication among the agents. In fact, algorithms in \citep{LianNIPS2017,JiangNIPS2017,Tang2018ICML,Lian2018ICML,Wang2018arXiv,Haddadpour2019ICML,Mahmoud2019ICML,Wang2019arXiv} all rely on periodic communication despite the local model has not changed from previously communicated model. This is a waste of resources, especially in wireless setting and therefore we propose an approach that would allow the nodes to transmit only if the local model has significantly changed from previously communicated model. The contributions of this paper are three-fold: (i) we propose a fully distributed machine learning architecture, (ii) we present a distributed SGD algorithm built on a novel communication triggering mechanism, and provide sufficient conditions on step-sizes such that the algorithm is mean-square convergent, and (iii) we demonstrate the efficacy of the proposed event-triggered SGD algorithm for distributed supervised learning with i.i.d. and more importantly, non-i.i.d. data.


\subsection*{Notation}

Let $\mathbb{R}^{n\times m}$ denote the set of $n\times m$ real matrices. For a vector $\bm{\phi}$, $\phi_i$ is the $i-{\text{th}}$ entry of $\bm{\phi}$.  An $n\times n$ identity matrix is denoted as $I_n$ and $\mathbf{1}_n$ denotes an $n$-dimensional vector of all ones. For $p\in[1,\,\infty]$, the $p$-norm of a vector $\mathbf{x}$ is denoted as $\left\| \mathbf{x} \right\|_p$. For matrices $A \in \mathbb{R}^{m\times n}$ and $B \in \mathbb{R}^{p \times q}$, $A \otimes B \in \mathbb{R}^{mp \times nq}$ denotes their Kronecker product.

For a graph $\mathcal{G}\left(\mathcal{V},\mathcal{E}\right)$ of order $n$, $\mathcal{V} \triangleq \left\{1, \ldots, n\right\}$ represents the agents or nodes and the communication links between the agents are represented as $\mathcal{E} \triangleq \left\{e_1, \ldots, e_{\ell}\right\} \subseteq \mathcal{V} \times \mathcal{V}$. Let $\mathcal{N}_i \triangleq \left\{j \in \mathcal{V}~:~(i,j)\in\mathcal{E}\right\}$ denote the set of neighbors of node $i$. Let $\mathcal{A} = \left[a_{ij}\right]\in \mathbb{R}^{n\times n}$ be the \emph{adjacency matrix} with entries of $a_{ij} = 1 $ if $(i,j)\in\mathcal{E}$ and zero otherwise. Define $\Delta = \text{diag}\left(\mathcal{A}\mathbf{1}_n\right)$ as the in-degree matrix and $\mathcal{L} = \Delta - \mathcal{A}$ as the graph \emph{Laplacian}. 

\section{Distributed machine learning}\label{sec:Problem}

Our problem formulation closely follows the centralized machine learning problem discussed in \citet{Bottou2018SIAM}. Consider a networked set of $n$ agents, each with a set of $m_i$, $i=1,\ldots,n$, independently drawn input-output samples $\{\bm{x}_i^{j},\,\bm{y}_i^j\}_{j=1}^{j=m_i}$, where $\bm{x}_i^j\in\mathbb{R}^{d_x}$ and $\bm{y}_i^j\in\mathbb{R}^{d_y}$ are the $j$-th input and output data, respectively, associated with the $i$-th agent. For example, the input data could be images and the outputs could be labels. Let $h\left(\cdot\,;\,\cdot\right):\mathbb{R}^{d_x}\times\mathbb{R}^{d_w} \mapsto \mathbb{R}^{d_y}$ denote the prediction function, fully parameterized by the vector $\bm{w}\in\mathbb{R}^{d_w}$. Each agent aims to find the parameter vector that minimizes the losses, $\ell\left(\cdot\,;\,\cdot\right):\mathbb{R}^{d_y}\times\mathbb{R}^{d_y} \mapsto \mathbb{R}$, incurred from inaccurate predictions. Thus, the loss function $\ell\left(h\left(\bm{x}_i;\bm{w}\right),\bm{y}_i\right)$ yields the loss incurred by the $i$-th agent, where $h\left(\bm{x}_i;\bm{w}\right)$ and $\bm{y}_i$ are the predicted and true outputs, respectively, for the $i$-th node.


Assuming the input output space $\mathbb{R}^{d_x}\times\mathbb{R}^{d_y}$ associated with the $i$-th agent is endowed with a probability measure $P_i~:~\mathbb{R}^{d_x}\times\mathbb{R}^{d_y}\mapsto[0,\,1]$, the objective function an agent wishes to minimize is
\begin{align}\label{Eq:Risk}
\begin{split}
R_i(\bm{w}) &= \int_{\mathbb{R}^{d_x}\times\mathbb{R}^{d_y}}\, \ell\left(h\left(\bm{x}_i;\bm{w}\right),\bm{y}_i\right)\,dP_i\left(\bm{x}_i,\bm{y}_i\right) \\
&= \mathbb{E}_{P_i} \left[ \ell\left(h\left(\bm{x}_i;\bm{w}\right),\bm{y}_i\right) \right].
\end{split}
\end{align}
Here $R_i(\bm{w})$ denotes the expected risk given a parameter vector $\bm{w}$ with respect to the probability distribution $P_i$. The total expected risk across all networked agents is given as
\begin{equation}\label{Eq:TotalRisk}
R(\bm{w}) = \sum_{i=1}^{n}R_i(\bm{w}) = \sum_{i=1}^{n} \mathbb{E}_{P_i} \left[ \ell\left(h\left(\bm{x}_i;\bm{w}\right),\bm{y}_i\right) \right].
\end{equation}
Minimizing the expected risk is desirable but often unattainable since the distributions $P_i$ are unknown. Thus, in practice, each agent chooses to minimize the empirical risk $\bar{R}_i(\bm{w})$ defined as
\begin{equation}\label{Eq:Risk1}
\bar{R}_i(\bm{w}) = \frac{1}{m_i}\sum_{j=1}^{m_i} \ell\left(h\left(\bm{x}_i^j;\bm{w}\right),\bm{y}_i^j\right).
\end{equation}
Here, the assumption is that $m_i$ is large enough so that $\bar{R}_i(\bm{w}) \approx R_i(\bm{w})$. The total empirical risk across all networked agents is
\begin{equation}\label{Eq:TotalRisk1}
\bar{R}(\bm{w}) = \sum_{i=1}^{n}\bar{R}_i(\bm{w}) = \sum_{i=1}^{n}  \frac{1}{m_i}\sum_{j=1}^{m_i} \ell\left(h(\bm{x}_i^j;\bm{w}),\bm{y}_i^j\right)
\end{equation}

To simplify the notation, let us represent a sample input-output pair $(\bm{x}_i,\,\bm{y}_i)$ by a random seed $\bm{\xi}_i$ and let $\bm{\xi}_i^j$ denote the $j$-th sample associated with the $i$-th agent. Define the loss incurred for a given $\left(\bm{w},\bm{\xi}_i^j\right)$ as $\ell\left(\bm{w},\bm{\xi}_i^j\right)$. Now, the distributed learning problem can be posed as an optimization involving sum of local empirical risks, i.e.,
\begin{align}\label{eq:Opt1}
  \min_{\bm{w}}\, f(\bm{w}) = \min_{\bm{w}}\,\sum_{i=1}^{n} \, f_i\left( \bm{w} \right),
\end{align}
where $f_i\left( \bm{w} \right) = \frac{1}{m_i}\sum_{j=1}^{m_i} \ell\left(\bm{w},\bm{\xi}_i^{j}\right)$.

\section{Distributed event-triggered SGD}\label{sec:SGD}

Here we propose a distributed event-triggered stochastic gradient method to solve \eqref{eq:Opt1}. Let $\bm{w}_i(k) \in \mathbb{R}^{d_w}$ denote agent $i$'s estimate of the optimizer at time instant $k$. Thus, for an arbitrary initial condition $\bm{w}_i(0)$, the update rule at node $i$ is as follows:
\begin{align}\label{Eq:DSG}
\begin{split}
  \bm{w}_i(k + 1) = \bm{w}_i(k) &- \beta_k \, \sum_{j=1}^{n}\,a_{ij} \left( \hat{\bm{w}}_i(k) - \hat{\bm{w}}_j(k) \right) \\ &- \alpha_k \,\mathbf{ g}_i\left( \bm{w}_i(k), \bm{\xi}_i(k) \right),
\end{split}
\end{align}
where $\alpha_k$ and $\beta_k$ are hyper parameters to be specified, $a_{ij}$ are the entries of the adjacency matrix and $\mathbf{g}_i\left( \bm{w}_i(k), \bm{\xi}_i(k) \right)$ represents either a simple stochastic gradient, mini-batch stochastic gradient or a stochastic quasi-Newton direction, i.e.,
\begin{align*}
  \mathbf{g}_i\left( \bm{w}_i(k), \bm{\xi}_i(k) \right) = \left\{
                                              \begin{array}{l}
                                                \nabla \ell\left(\bm{w}_i(k),\bm{\xi}_i^{k}\right), ~~\mbox{or} \\
                                                \frac{1}{n_i(k)}\sum\limits_{s=1}^{n_i(k)}\,\nabla \ell\left(\bm{w}_i(k),\bm{\xi}_i^{k,s}\right), ~~\mbox{or} \\
                                                H_i(k) \frac{1}{n_i(k)}\sum\limits_{s=1}^{n_i(k)}\,\nabla \ell\left(\bm{w}_i(k),\bm{\xi}_i^{k,s}\right),
                                              \end{array}
                                            \right.
\end{align*}
\noindent where $n_i(k)$ denotes the mini-batch size, $H_i(k)$ is a positive definite scaling matrix, $\bm{\xi}_i^{k}$ represents the single random input-output pair sampled at time instant $k$, and $(\bm{\xi}_i^{k,s})$ denotes the $s$-th input-output pair out of the $n_i(k)$ random input-output pairs sampled at time instant $k$. For $i=1,\ldots,n$, the piece-wise constant signal $\hat{\bm{w}}_i(k)$ defined as
\begin{equation}
    \hat{\bm{w}}_i(k) = \bm{w}_i(t_q^i), \,\,\forall \,k\in \left\{ t_q^i,\,t_q^i+1,\,\ldots,\,t_{q+1}^i-1 \right\},
\end{equation}
denote agent $i$'s last broadcasted estimate of the optimizer. Here $\left\{t_q^i,\,\,\,q=0,1,\ldots\right\}$ with $t_0^i=0$ denotes triggering instants, i.e., the time instants when agent $i$ broadcasts $\bm{w}_i$ to its neighbors. Define $\mathbf{w}(k) \triangleq \begin{bmatrix} \bm{w}_1^\top(k) & \ldots & \bm{w}_n^\top(k) \end{bmatrix}^\top \in \mathbb{R}^{nd_w}$ and
$\hat{\mathbf{w}}(k) \triangleq \begin{bmatrix} \hat{\bm{w}}_1^\top(k) & \ldots & \hat{\bm{w}}_n^\top(k) \end{bmatrix}^\top \in \mathbb{R}^{nd_w}$. Now \eqref{Eq:DSG} can be written as
\begin{align}\label{Eq:DSG1}
\begin{split}
  \mathbf{w}(k + 1) = \mathbf{w}(k) &-\beta_k \left(\mathcal{L} \otimes I_{d_w}\right) \hat{\mathbf{w}}(k) \\&- \alpha_k \, \mathbf{g}(\mathbf{w}(k),\bm{\xi}(k)),
  \end{split}
\end{align}
where $\mathcal{L}$ is the network Laplacian and
$$\mathbf{g}(\mathbf{w}(k),\bm{\xi}(k)) \triangleq \begin{bmatrix}
                             \mathbf{g}_1\left( \bm{w}_1(k), \bm{\xi}_1(k) \right) \\
                             \vdots \\
                             \mathbf{g}_n\left( \bm{w}_n(k), \bm{\xi}_n(k) \right)
                           \end{bmatrix} \in \mathbb{R}^{n d_w}.$$
Let $\bm{e}_i(k) = {\bm{w}}_i(k)-\hat{\bm{w}}_i(k)$ and $\mathbf{e}(k) = \mathbf{w}(k) - \hat{\mathbf{w}}(k)$. Now \eqref{Eq:DSG1} can be written as
\begin{align}\label{Eq:DSG1a}
\begin{split}
  \mathbf{w}(k + 1) = \left(\mathcal{W}_k \otimes I_{d_w}\right) \mathbf{w}(k) &+\beta_k\left(\mathcal{L} \otimes I_{d_w}\right) {\mathbf{e}}(k) \\
	&- \alpha_k \, \mathbf{g}(\mathbf{w}(k),\bm{\xi}(k)),
\end{split}
\end{align}
where $\mathcal{W}_k = \left(I_{n}-\beta_k\mathcal{L}\right)$. The event instants are defined as
\begin{equation}\label{TriggCond}
    t^i_{q+1} = \inf\left\{ k > t^i_q \,\, | \,\, \left\|\bm{e}_i(k)\right\|_1 \geq \upsilon_0\,\alpha_k \right\},
\end{equation}
where $\upsilon_0$ is a positive constant to be defined. Pseudo-code of the proposed distributed event-triggered SGD is given in Algorithm~\ref{Algorithm1} (see supplementary material).


Now we state the following assumption on the individual objective functions:
\begin{assumption}\label{Assump:Lipz}
  Objective functions $f_i(\,\cdot\,)$ and its gradients $\nabla f_i(\,\cdot\,)$ $:\mathbb{R}^{d_w}\mapsto \mathbb{R}^{d_w}$ are Lipschitz continuous with Lipschitz constants $L^0_i > 0$ and $L_i > 0$, respectively, i.e., $\forall \,\bm{w}_a,\,\bm{w}_b\in\mathbb{R}^{d_w}, \, i=1,\ldots,n$, we have
\begin{align*}
  \| f_i(\,\bm{w}_a\,) -  f_i(\,\bm{w}_b\,) \|_2 &\leq L_i^0 \|\bm{w}_a-\bm{w}_b\|_2\,\,\textnormal{and} \\
	\,\,\| \nabla f_i(\,\bm{w}_a\,) - \nabla f_i(\,\bm{w}_b\,) \|_2 &\leq L_i \|\bm{w}_a-\bm{w}_b\|_2.
\end{align*}
\end{assumption}
Now we introduce $F(\cdot):\mathbb{R}^{nd_w}\mapsto \mathbb{R}$, an aggregate objective function of local variables
\begin{align}\label{eq:obj}
   F(\mathbf{w}(k)) = \sum_{i=1}^{n} \, f_i\left( \bm{w}_i(k) \right).
\end{align}
Following Assumption \ref{Assump:Lipz}, the function $F(\cdot)$ is Lipschitz continuous with Lipschitz continuous gradient $\nabla F(\cdot)$, i.e., $\forall \,\mathbf{w}_a,\,\mathbf{w}_b\in\mathbb{R}^{n d_w}$, we have $\| \nabla F(\,\mathbf{w}_a\,) - \nabla F(\,\mathbf{w}_b\,) \|_2 \leq L \|\mathbf{w}_a-\mathbf{w}_b\|_2$,
with constant $L = \max\limits_i\{L_i\}$ and $\nabla F(\,\mathbf{w}\,) \triangleq \begin{bmatrix} \nabla f_1(\,\bm{w}_1\,)^\top &\ldots & \nabla f_n(\,\bm{w}_n\,)^\top \end{bmatrix}^\top \in \mathbb{R}^{n d_w}$.
\begin{lemma}\label{Lemma:Lipz}
  Given Assumption \ref{Assump:Lipz}, we have $\forall \,\mathbf{w}_a,\,\mathbf{w}_b\in\mathbb{R}^{n d_w}$,
\begin{align}\label{Eqn:lemma1}
\begin{split}
  F(\,\mathbf{w}_b\,)
\leq F(\,\mathbf{w}_a\,) &+ \nabla F\left(\mathbf{w}_a\right)^\top(\mathbf{w}_b-\mathbf{w}_a) \\
&+ \frac{1}{2} L  \|\mathbf{w}_b-\mathbf{w}_a\|_2^2.
\end{split}
\end{align}
\end{lemma}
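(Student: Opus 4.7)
The statement is the standard descent lemma (quadratic upper bound) applied to the aggregate objective $F$, so my plan is to invoke the Lipschitz continuity of $\nabla F$ already established right before the lemma and then do the usual integral-remainder argument.

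First, I would note that the global Lipschitz constant $L = \max_i L_i$ for $\nabla F$ is already in hand from the paragraph preceding the lemma; it follows from the block-diagonal structure of $\nabla F$, since
\begin{align*}
\|\nabla F(\mathbf{w}_a) - \nabla F(\mathbf{w}_b)\|_2^2 &= \sum_{i=1}^n \|\nabla f_i(\bm{w}_{a,i}) - \nabla f_i(\bm{w}_{b,i})\|_2^2 \\
&\leq \sum_{i=1}^n L_i^2 \|\bm{w}_{a,i}-\bm{w}_{b,i}\|_2^2 \leq L^2 \|\mathbf{w}_a - \mathbf{w}_b\|_2^2.
\end{align*}
So the remaining task is purely the passage from gradient-Lipschitzness to the quadratic upper bound.

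The plan is to apply the fundamental theorem of calculus along the segment joining $\mathbf{w}_a$ and $\mathbf{w}_b$. Let $\bm{\Delta} = \mathbf{w}_b - \mathbf{w}_a$ and write
\begin{equation*}
F(\mathbf{w}_b) - F(\mathbf{w}_a) = \int_0^1 \nabla F\bigl(\mathbf{w}_a + t\bm{\Delta}\bigr)^\top \bm{\Delta}\, dt.
\end{equation*}
Subtracting and adding $\nabla F(\mathbf{w}_a)^\top \bm{\Delta}$ inside the integrand gives
\begin{equation*}
F(\mathbf{w}_b) - F(\mathbf{w}_a) - \nabla F(\mathbf{w}_a)^\top \bm{\Delta} = \int_0^1 \bigl[\nabla F(\mathbf{w}_a + t\bm{\Delta}) - \nabla F(\mathbf{w}_a)\bigr]^\top \bm{\Delta}\, dt.
\end{equation*}

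Then I would bound the right-hand side by Cauchy--Schwarz and the Lipschitz bound on $\nabla F$:
\begin{equation*}
\int_0^1 \|\nabla F(\mathbf{w}_a + t\bm{\Delta}) - \nabla F(\mathbf{w}_a)\|_2\,\|\bm{\Delta}\|_2\, dt \leq \int_0^1 L\, t\, \|\bm{\Delta}\|_2^2\, dt = \tfrac{1}{2} L \|\bm{\Delta}\|_2^2,
\end{equation*}
which yields \eqref{Eqn:lemma1}. There is no real obstacle here; the only care needed is justifying the differentiability/integrability of $t \mapsto F(\mathbf{w}_a + t\bm{\Delta})$, which is immediate from Assumption~\ref{Assump:Lipz} (each $f_i$ is $C^1$ with Lipschitz gradient, hence so is $F$). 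This is a textbook argument and I would write it up essentially verbatim as above.
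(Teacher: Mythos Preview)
Your argument is correct and is exactly the standard integral-remainder proof of the descent lemma; the paper itself just writes ``Proof follows from the mean value theorem'' without further detail, so your write-up is a fleshed-out version of the same one-line approach.
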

\begin{proof}
  Proof follows from the mean value theorem.
\end{proof}
\begin{assumption}\label{Assump:Fmin}
  The function $F(\cdot)$ is lower bounded by $F_{\inf}$, i.e., $F_{\inf} \leq F(\mathbf{w}), \,\, \forall\,\mathbf{w}\in\mathbb{R}^{nd_w}$.
\end{assumption}
Without loss of generality, we assume that $F_{\inf} \geq 0$.
\noindent Now we make the following assumption regarding $\{\alpha_k\}$ and $\{\beta_k\}$:
\begin{assumption}\label{Assump:AlphaBeta}
Sequences $\{\alpha_k\}$ and $\{\beta_k\}$ are selected as
\begin{align}\label{Eqn:AlphaBeta}
\alpha_k = \frac{a}{(k+1)^{\delta_2}} \quad \textnormal{and} \quad \beta_k = \frac{b}{(k+1)^{\delta_1}},
\end{align}
where $a > 0$, $b>0$, $0 < 3\delta_1 < \delta_2 \leq 1$, $\delta_1/2 + \delta_2 > 1$, and $\delta_2 > 1/2$.
\end{assumption}
For sequences $\{\alpha_k\}$ and $\{\beta_k\}$ that satisfy Assumption \ref{Assump:AlphaBeta}, we have $\sum_{k=1}^{\infty}\,\alpha_k = \infty$, $\sum_{k=1}^{\infty}\,\beta_k = \infty$, $\sum_{k=1}^{\infty}\,\alpha_k^2 < \infty$ and $\sum_{k=1}^{\infty}\,\alpha_k\beta_k^{1/2} < \infty$. Thus $\alpha_k$ and $\beta_k$ are not summable sequences. However, $\alpha_k$ is square-summable and $\alpha_k\sqrt{\beta_k}$ is summable.
\begin{assumption}\label{Assump:Graph}
  The interaction topology of $n$ networked agents is given as a connected undirected graph $\mathcal{G}\left(\mathcal{V},\mathcal{E}\right)$.
\end{assumption}
\begin{lemma}\label{Lemma2}
  Given Assumption \ref{Assump:Graph}, for all $\mathbf{x}\in\mathbb{R}^n$ we have $\mathbf{x}^\top\mathcal{L}\mathbf{x} = \tilde{\mathbf{x}}^\top\mathcal{L}\tilde{\mathbf{x}}  \geq  \lambda_2(\mathcal{L})\|\tilde{\mathbf{x}}\|_2^2$,
where $\tilde{\mathbf{x}} = \left(I_n - \frac{1}{n}\mathbf{1}_n\mathbf{1}^\top_n\right) \mathbf{x}$ is the average-consensus error and $\lambda_2(\cdot)$ denotes the smallest non-zero eigenvalue.
\end{lemma}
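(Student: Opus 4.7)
The plan is to exploit two classical facts about the Laplacian of a connected undirected graph: first, $\mathcal{L}$ is symmetric positive semidefinite; second, its null space is exactly $\textnormal{span}(\mathbf{1}_n)$, so $0 = \lambda_1(\mathcal{L}) < \lambda_2(\mathcal{L}) \le \ldots \le \lambda_n(\mathcal{L})$ with $\mathbf{1}_n$ spanning the kernel. I will use this to show both the equality and the subsequent eigenvalue bound.

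For the equality $\mathbf{x}^\top\mathcal{L}\mathbf{x} = \tilde{\mathbf{x}}^\top\mathcal{L}\tilde{\mathbf{x}}$, I decompose $\mathbf{x}$ using the orthogonal projector onto $\textnormal{span}(\mathbf{1}_n)$: writing $\mathbf{x} = \bar{\mathbf{x}} + \tilde{\mathbf{x}}$ with $\bar{\mathbf{x}} = \frac{1}{n}\mathbf{1}_n\mathbf{1}_n^\top\mathbf{x}$ and $\tilde{\mathbf{x}} = \left(I_n - \frac{1}{n}\mathbf{1}_n\mathbf{1}_n^\top\right)\mathbf{x}$, I note that $\bar{\mathbf{x}}$ is a scalar multiple of $\mathbf{1}_n$, hence $\mathcal{L}\bar{\mathbf{x}} = 0$. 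Expanding the quadratic form $\mathbf{x}^\top\mathcal{L}\mathbf{x} = (\bar{\mathbf{x}}+\tilde{\mathbf{x}})^\top\mathcal{L}(\bar{\mathbf{x}}+\tilde{\mathbf{x}})$ and using both $\mathcal{L}\bar{\mathbf{x}}=0$ and $\bar{\mathbf{x}}^\top\mathcal{L}=0$ (by symmetry) collapses all cross terms, leaving $\tilde{\mathbf{x}}^\top\mathcal{L}\tilde{\mathbf{x}}$.

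For the inequality, I verify that $\tilde{\mathbf{x}}$ is orthogonal to $\mathbf{1}_n$: indeed, $\mathbf{1}_n^\top \tilde{\mathbf{x}} = \mathbf{1}_n^\top\mathbf{x} - \frac{1}{n}(\mathbf{1}_n^\top\mathbf{1}_n)(\mathbf{1}_n^\top\mathbf{x}) = 0$. Since $\mathcal{L}$ is symmetric, it admits an orthonormal eigenbasis, with $\mathbf{1}_n/\sqrt{n}$ as the eigenvector for the eigenvalue $0 = \lambda_1(\mathcal{L})$. The Courant--Fischer (Rayleigh--Ritz) variational characterization then gives
\begin{align*}
\lambda_2(\mathcal{L}) = \min_{\substack{\mathbf{v} \ne 0\\ \mathbf{v} \perp \mathbf{1}_n}} \frac{\mathbf{v}^\top\mathcal{L}\mathbf{v}}{\|\mathbf{v}\|_2^2},
\end{align*}
so applying this to $\tilde{\mathbf{x}}$ yields $\tilde{\mathbf{x}}^\top\mathcal{L}\tilde{\mathbf{x}} \ge \lambda_2(\mathcal{L})\|\tilde{\mathbf{x}}\|_2^2$ (trivially when $\tilde{\mathbf{x}}=0$, and by Courant--Fischer otherwise).

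There is essentially no obstacle here; the result is a standard consequence of spectral properties of the Laplacian of a connected undirected graph. The only subtlety worth flagging is the strict positivity $\lambda_2(\mathcal{L}) > 0$, which requires connectedness from Assumption~\ref{Assump:Graph} (otherwise the inequality would still hold but with a vacuous constant, and $\lambda_2$ as defined in the statement, the smallest \emph{non-zero} eigenvalue, would not coincide with the second-smallest one). I would state this observation explicitly so that the bound is meaningful when it is invoked later in the convergence analysis.
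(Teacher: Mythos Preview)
Your proposal is correct and follows essentially the same approach as the paper, which simply states that the lemma follows from the Courant--Fischer Theorem. You have spelled out the details---the decomposition $\mathbf{x}=\bar{\mathbf{x}}+\tilde{\mathbf{x}}$ with $\mathcal{L}\bar{\mathbf{x}}=0$ for the equality, and the Rayleigh--Ritz characterization over $\mathbf{1}_n^\perp$ for the inequality---that the paper leaves implicit.
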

\begin{proof}
  This Lemma follows from the Courant-Fischer Theorem~\citep{Horn}.
\end{proof}
\begin{assumption}\label{Assump:Beta}
Parameter $b$ in sequence $\{\beta_k\}$ is selected such that
\begin{align}
\mathcal{W}_0 = \left(I_{n}-b\mathcal{L}\right)
\end{align}
has a single eigenvalue at $1$ corresponding to the right eigenvector $\mathbf{1}_n$ and the remaining $n-1$ eigenvalues of $\mathcal{W}_0$ are strictly inside the unit circle.
\end{assumption}
In other words, $b$ is selected such that $b < 1/\sigma_{\max}(\mathcal{L})$, where $\sigma_{\max}(\cdot)$ denotes the largest singular value. Thus, $b\sigma_{\max}(\mathcal{L}) < 1$.
Let $\mathbb{E}_{\xi}[\cdot]$ denote the expected value taken with respect to the distribution of the random variable $\bm{\xi}_k$ given the filtration $\mathcal{F}_{k}$ generated by the sequence $\{\mathbf{w}_{0},\ldots,\mathbf{w}_{k}\}$, i.e.,
\begin{align*}
   \mathbb{E}_{\xi}[\,\mathbf{w}_{k+1}\,] &= \mathbb{E}[\,\mathbf{w}_{k+1}\,|\mathcal{F}_{k}] \\
	&= \left(\mathcal{W}_k \otimes I_{d_w} \right) \mathbf{w}_k - \alpha_k \mathbb{E}[\,\mathbf{g}(\mathbf{w}_k,\bm{\xi}_k)\,|\mathcal{F}_{k}]\,\,\textnormal{a.s.},
\end{align*}
where $\textnormal{a.s.}$ (almost surely) denote events that occur with probability one. Now we make the following assumptions regarding the stochastic gradient term $\mathbf{g}(\mathbf{w}(k),\bm{\xi}(k))$.
\begin{assumption}\label{Assump:Grad1}
  Stochastic gradients are unbiased such that
\begin{equation}
  \mathbb{E}_{\xi}\left[\,  \mathbf{g}(\mathbf{w}_k,\bm{\xi}_k) \,\right] = \nabla F(\mathbf{w}_k), \,\, \textnormal{a.s.}
\end{equation}
That is to say
$
  \mathbb{E}_{\xi}\left[\,  \mathbf{g}(\mathbf{w}_k,\bm{\xi}_k) \,\right] =
\begin{bmatrix}
\mathbb{E}_{\xi_1}\left[\,\mathbf{g}_1\left( \bm{w}_1(k), \bm{\xi}_1(k) \right) \,\right]\\
\vdots \\
\mathbb{E}_{\xi_n}\left[\,\mathbf{g}_n\left( \bm{w}_n(k), \bm{\xi}_n(k) \right) \,\right]
\end{bmatrix}
=
\begin{bmatrix}
  \nabla f_1(\,\bm{w}_1(k)\,) \\
  \vdots \\
  \nabla f_n(\,\bm{w}_n(k)\,)
\end{bmatrix}
$
\end{assumption}
\begin{assumption}\label{Assump:Grad2}
 Stochastic gradients have conditionally bounded second moment, i.e., there exist scalars $\bar{\mu}_{v_1} \geq 0$ and $\bar{\mu}_{v_2} \geq 0$ such that
\begin{align}
\begin{split}
\mathbb{E}_{\xi} \left[ \| \mathbf{g}(\mathbf{w}_k,\bm{\xi}_k) \|_2^2 \right] \leq
\bar{\mu}_{v_1} + \bar{\mu}_{v_2} \left\|  \nabla F(\mathbf{w}_k) \right\|^2_2,\,\, \textnormal{a.s.}
\end{split}\label{Eq:2ndGrad}
\end{align}
\end{assumption}
Assumption~\ref{Assump:Grad2} is the bounded variance assumption typically made in all SGD literature.



\section{Convergence analysis}
Define the average-consensus error as $\tilde{\mathbf{w}}_k = \left(M \otimes I_{d_w}\right)\mathbf{w}_k$, where $M = I_{n} - \frac{1}{n}\mathbf{1}_{n}\mathbf{1}_{n}^\top$. Note that $M\mathcal{L} = \mathcal{L}$ and $\left(\mathcal{L} \otimes I_{d_w}\right)\tilde{\mathbf{w}}_k = \left(\mathcal{L} \otimes I_{d_w}\right)\mathbf{w}_k$. Thus from \eqref{Eq:DSG1a} we have
\begin{align}\label{Eqn:wTilde}
\begin{split}
    \tilde{\mathbf{w}}_{k+1} = &\left(\mathcal{W}_k \otimes I_{d_w} \right) \tilde{\mathbf{w}}_k +\beta_k \left(\mathcal{L} \otimes I_{d_w}\right) {\mathbf{e}}(k)\\ &\qquad- \alpha_k \left(M \otimes I_{d_w}\right) \mathbf{g}(\mathbf{w}_k,\bm{\xi}_k).
\end{split}
\end{align}
Our strategy for proving the convergence of the proposed distributed event-triggered SGD algorithm to a critical point is as follows. First we show that the consensus error among the agents are diminishing at the rate of $O\left(\frac{1}{(k+1)^{\delta_2}}\right)$ (see Theorem~\ref{Theorem:Consensus}). Asymptotic convergence of the algorithm is then proved in Theorem~\ref{Theorem:Convergence}. Theorem~\ref{Theorem:SummableGrad} then establishes that the weighted expected average gradient norm is a summable sequence. Convergence rate of the algorithm in the typical weak sense is given in Theorem~\ref{Theorem:ConvergenceRate}. Finally, Theorem~\ref{Theorem:OptCond} proves the asymptotic mean-square convergence of the algorithm to a critical point.
\begin{theorem}\label{Theorem:Consensus}
  Consider the event-triggered SGD algorithm \eqref{Eq:DSG} under Assumptions~1-7. Then, there holds:
  \begin{align}\label{Eq:MSbound1a}
      \mathbb{E} \left[ \| \tilde{\mathbf{w}}_k \|_2^2 \right] = O\left(\displaystyle\frac{1}{(k+1)^{\delta_2}}\right).
  \end{align}
\end{theorem}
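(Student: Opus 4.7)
The strategy is to derive a scalar recursion for $V_k := \mathbb{E}[\|\tilde{\mathbf{w}}_k\|_2^2]$ of the form $V_{k+1} \le (1-c\beta_k)V_k + D\,\alpha_k^2/\beta_k$, valid for all sufficiently large $k$, and then extract the claimed rate from a standard Chung/Polyak polynomial-decay lemma. Starting from \eqref{Eqn:wTilde} I would square both sides, expand into three squared terms and three inner-product cross terms, and take conditional expectation given $\mathcal{F}_k$; by Assumption~\ref{Assump:Grad1} every factor of $\mathbf{g}(\mathbf{w}_k,\bm{\xi}_k)$ that appears linearly is replaced by $\nabla F(\mathbf{w}_k)$.

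Three ingredients then supply the deterministic bounds. First, expanding $(I-\beta_k\mathcal{L})^2 = I - 2\beta_k\mathcal{L} + \beta_k^2\mathcal{L}^2$ and invoking Lemma~\ref{Lemma2} on the consensus-complement subspace gives $\|(\mathcal{W}_k\otimes I_{d_w})\tilde{\mathbf{w}}_k\|_2^2 \le (1 - 2\beta_k\lambda_2(\mathcal{L}) + \beta_k^2\sigma_{\max}^2(\mathcal{L}))\|\tilde{\mathbf{w}}_k\|_2^2$. Second, the triggering rule \eqref{TriggCond} enforces $\|\bm{e}_i(k)\|_1 \le \upsilon_0\alpha_k$ componentwise, so that $\|(\mathcal{L}\otimes I_{d_w})\mathbf{e}(k)\|_2 \le n\upsilon_0\sigma_{\max}(\mathcal{L})\,\alpha_k$. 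Third, Assumption~\ref{Assump:Lipz} makes $\|\nabla F(\mathbf{w}_k)\|_2$ uniformly bounded by some $C_F^{1/2}$, and combining with Assumption~\ref{Assump:Grad2} yields $\mathbb{E}_\xi[\|\mathbf{g}(\mathbf{w}_k,\bm{\xi}_k)\|_2^2] \le \bar{\mu}_{v_1}+\bar{\mu}_{v_2}C_F =: C_g$.

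The three cross terms are handled by Young's inequality with weights tuned so that the contraction factor is preserved. The critical one is $-2\alpha_k\langle(\mathcal{W}_k\otimes I_{d_w})\tilde{\mathbf{w}}_k,(M\otimes I_{d_w})\nabla F(\mathbf{w}_k)\rangle$; applying $2|ab|\le\eta a^2+\eta^{-1}b^2$ with $\eta\propto\beta_k$ produces an $O(\beta_k)V_k$ contribution that is absorbed into the contraction and a residual noise term of order $\alpha_k^2/\beta_k$. The other two cross terms contribute only $O(\beta_k\alpha_k^2)$, which is of lower order. Collecting everything, for all $k\ge K_0$ with $K_0$ large enough that $\beta_k\sigma_{\max}(\mathcal{L})$ is small, one obtains
\begin{align*}
V_{k+1} \le \left(1 - \tfrac{1}{2}\lambda_2(\mathcal{L})\beta_k\right)V_k + D\,\frac{\alpha_k^2}{\beta_k}.
\end{align*}

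The closing step is a standard polynomial-rate lemma: if $V_{k+1}\le(1-c/(k+1)^{\delta_1})V_k + D/(k+1)^{2\delta_2-\delta_1}$ with $2\delta_2-\delta_1>\delta_1$, then induction on $k$ yields $V_k = O((k+1)^{-(2\delta_2-2\delta_1)})$, provided the base constant is chosen large enough to close the inductive step. Since Assumption~\ref{Assump:AlphaBeta} enforces $\delta_2 > 3\delta_1$, we have $2\delta_2 - 2\delta_1 > \delta_2$, which gives the claimed $O((k+1)^{-\delta_2})$. The main obstacle is the Young's-inequality weighting on the $\alpha_k$-cross term: too small a weight destroys the contraction, while too large a weight produces a noise contribution of order $\alpha_k$ rather than $\alpha_k^2/\beta_k$, which breaks the rate. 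Once that weight is chosen correctly, the Chung/Polyak polynomial-decay induction is routine.
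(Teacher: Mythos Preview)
Your argument is correct and uses the same high-level skeleton as the paper (derive a one-step scalar recursion for $\mathbb{E}[\|\tilde{\mathbf w}_k\|_2^2]$ using the triggering bound $\|\mathbf e(k)\|_1\le n\upsilon_0\alpha_k$, the spectral contraction of $\mathcal W_k$ on the consensus complement, and the uniform bound on $\mathbb{E}[\|\mathbf g\|_2^2]$, then feed that recursion into a polynomial-decay lemma). The implementation, however, differs in a way worth noting. The paper first takes the $\ell_2$-norm of \eqref{Eqn:wTilde}, applies the triangle inequality, and only then squares via $(x+y)^2\le(1+\theta)x^2+(1+1/\theta)y^2$ with $\theta=\beta_k\lambda_2(\mathcal L)$; this collapses all perturbation terms into a single ``$y$'' and produces the contraction factor $1-\beta_k^2\lambda_2(\mathcal L)^2$. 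With $r_1(k)\asymp\beta_k^2$ and $r_2(k)\asymp\alpha_k^2/\beta_k$, Lemma~4.1 of \citet{Kar2013SIAM} then gives $\mathbb{E}[\|\tilde{\mathbf w}_k\|_2^2]=O((k+1)^{-\delta_0})$ for every $\delta_0<2\delta_2-3\delta_1$, and Assumption~\ref{Assump:AlphaBeta} ($\delta_2>3\delta_1$) closes the gap to $\delta_2$. Your route---square first, then apply Young's inequality separately to each of the three cross terms with weight proportional to $\beta_k$---preserves the stronger contraction factor $1-c\beta_k$ rather than $1-c\beta_k^2$, so in the decay lemma you get $\epsilon_1=\delta_1$ instead of $2\delta_1$ and hence the sharper exponent range $\delta_0<2\delta_2-2\delta_1$. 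The price is bookkeeping three cross terms individually and tuning each Young weight so as not to destroy the contraction, exactly the obstacle you flag; the paper's norm-then-square approach sidesteps that at the cost of a weaker (but still sufficient) exponent. Either way, the $O((k+1)^{-\delta_2})$ conclusion follows.
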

%
Proof of Theorem~\ref{Theorem:Consensus} is given in supplementary material. Define
\begin{equation}
    \gamma_k = \frac{\alpha_k}{\beta_k} = \frac{a/b}{(k+1)^{\delta_2-\delta_1}}.
\end{equation}
Now define a non-negative function $V(\gamma_k,\mathbf{w}_k)$ as
\begin{equation}\label{Vk}
  V(\gamma_k,\mathbf{w}_k)
  = F(\mathbf{w}_k) + \frac{1}{2\gamma_k}\, {\mathbf{w}}^\top_k \left(\mathcal{L}\otimes I_{d_w}\right){\mathbf{w}}_k.
\end{equation}
Taking the gradient with respect to $\mathbf{w}_k$ yields
\begin{equation}\label{dVk}
  \nabla V(\gamma_k,\mathbf{w}_k) = \nabla F(\mathbf{w}_k) + \frac{1}{\gamma_k}\, \left(\mathcal{L}\otimes I_{d_w}\right){\mathbf{w}}_k.
\end{equation}
\begin{theorem}\label{Theorem:InfSum}
  Consider the distributed event-triggered SGD algorithm \eqref{Eq:DSG} under Assumptions~1-7. Then, for the gradient $\nabla V(\gamma_k,\mathbf{w}_k)$ given in \eqref{dVk}, there holds:
  \begin{align}\label{Eqn:SummableGrad}
      \sum\limits_{k=0}^{\infty} \, \alpha_k \mathbb{E}\left[ \left\|  \nabla V(\gamma_k,\mathbf{w}_k)  \right\|^2_2 \right] < \infty.
  \end{align}
\end{theorem}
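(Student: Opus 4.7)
The plan is to treat $V(\gamma_k,\mathbf{w}_k)$ as a Lyapunov function and to derive a one-step descent inequality of the form $\mathbb{E}[V(\gamma_{k+1},\mathbf{w}_{k+1})\mid\mathcal{F}_k]\le V(\gamma_k,\mathbf{w}_k) - \tfrac{\alpha_k}{2}\|\nabla V(\gamma_k,\mathbf{w}_k)\|_2^2 + r_k$, where the residual $r_k$ is non-negative and satisfies $\sum_k\mathbb{E}[r_k]<\infty$. Once this is in hand, taking total expectation, telescoping from some index $K_0$ onward, and using $V\ge F_{\inf}\ge 0$ (Assumption~\ref{Assump:Fmin}) will immediately yield $\sum_k\alpha_k\mathbb{E}[\|\nabla V(\gamma_k,\mathbf{w}_k)\|_2^2]<\infty$.

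The first step is algebraic: using $\alpha_k/\gamma_k=\beta_k$, the update \eqref{Eq:DSG1a} rewrites as
\begin{align*}
\mathbf{w}_{k+1}-\mathbf{w}_k = -\alpha_k\nabla V(\gamma_k,\mathbf{w}_k) - \alpha_k\mathbf{z}_k + \beta_k(\mathcal{L}\otimes I_{d_w})\mathbf{e}_k,
\end{align*}
with $\mathbf{z}_k\triangleq\mathbf{g}(\mathbf{w}_k,\bm{\xi}_k)-\nabla F(\mathbf{w}_k)$, a martingale difference by Assumption~\ref{Assump:Grad1}. Since Lipschitz continuity of $f_i$ (Assumption~\ref{Assump:Lipz}) implies $\|\nabla f_i\|_2\le L_i^0$, Assumption~\ref{Assump:Grad2} upgrades to a uniform bound $\mathbb{E}[\|\mathbf{z}_k\|_2^2\mid\mathcal{F}_k]\le C_1$. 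I then split
\begin{align*}
V(\gamma_{k+1},\mathbf{w}_{k+1}) - V(\gamma_k,\mathbf{w}_k) = \bigl[V(\gamma_k,\mathbf{w}_{k+1})-V(\gamma_k,\mathbf{w}_k)\bigr] + \bigl(\tfrac{1}{2\gamma_{k+1}}-\tfrac{1}{2\gamma_k}\bigr)\mathbf{w}_{k+1}^\top(\mathcal{L}\otimes I_{d_w})\mathbf{w}_{k+1},
\end{align*}
apply Lemma~\ref{Lemma:Lipz} to the $F$-component of the bracketed difference, and expand the Laplacian quadratic exactly, producing the upper bound $\nabla V(\gamma_k,\mathbf{w}_k)^\top(\mathbf{w}_{k+1}-\mathbf{w}_k) + \tfrac{1}{2}(\mathbf{w}_{k+1}-\mathbf{w}_k)^\top\bigl(L\,I_{nd_w}+\tfrac{1}{\gamma_k}(\mathcal{L}\otimes I_{d_w})\bigr)(\mathbf{w}_{k+1}-\mathbf{w}_k)$ for the bracket.

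Substituting the three-term decomposition into this bound and taking $\mathbb{E}[\cdot\mid\mathcal{F}_k]$, the linear term in $\mathbf{z}_k$ vanishes. The linear term $\beta_k\nabla V_k^\top(\mathcal{L}\otimes I_{d_w})\mathbf{e}_k$ is handled by Young's inequality, yielding $\tfrac{\alpha_k}{4}\|\nabla V_k\|_2^2 + O(\beta_k^2\|\mathbf{e}_k\|_2^2/\alpha_k)$, and the triggering rule \eqref{TriggCond} enforces $\|\mathbf{e}_k\|_2\le n\upsilon_0\alpha_k$, so the residual is $O(\alpha_k\beta_k^2)=O((k+1)^{-(\delta_2+2\delta_1)})$, summable because Assumption~\ref{Assump:AlphaBeta} gives $\delta_2+2\delta_1>1+\tfrac{3\delta_1}{2}>1$. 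The quadratic term carries a spectral prefactor of order $1/\gamma_k$, and the splitting $\|\mathbf{w}_{k+1}-\mathbf{w}_k\|_2^2\le 3\alpha_k^2\|\nabla V_k\|_2^2 + 3\alpha_k^2 C_1 + O(\alpha_k^2\beta_k^2)$ generates an $\|\nabla V_k\|_2^2$ coefficient of order $\alpha_k^2+\alpha_k\beta_k$ that vanishes as $k\to\infty$ (absorbable into a second $\tfrac{\alpha_k}{4}\|\nabla V_k\|_2^2$), together with deterministic residuals of orders $\alpha_k^2$, $\alpha_k\beta_k$, and $\alpha_k\beta_k^3$, all summable under Assumption~\ref{Assump:AlphaBeta}. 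The $\gamma$-shift term is controlled by the mean-value estimate $\tfrac{1}{\gamma_{k+1}}-\tfrac{1}{\gamma_k}=O((k+1)^{\delta_2-\delta_1-1})$ multiplied by $\mathbb{E}[\mathbf{w}_k^\top(\mathcal{L}\otimes I_{d_w})\mathbf{w}_k]\le\sigma_{\max}(\mathcal{L})\,\mathbb{E}[\|\tilde{\mathbf{w}}_k\|_2^2]=O((k+1)^{-\delta_2})$ from Theorem~\ref{Theorem:Consensus}, giving an $O((k+1)^{-(\delta_1+1)})$ summable contribution.

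Assembling the pieces, for $k\ge K_0$ with $K_0$ chosen so that the absorbable prefactor stays below $\tfrac{\alpha_k}{4}$, the desired descent inequality holds with $\sum_{k\ge K_0}\mathbb{E}[r_k]<\infty$. Telescoping and letting the upper limit tend to infinity, together with $\mathbb{E}[V(\gamma_{K+1},\mathbf{w}_{K+1})]\ge F_{\inf}\ge 0$, gives $\sum_{k\ge K_0}\alpha_k\mathbb{E}[\|\nabla V_k\|_2^2]<\infty$; the finitely many pre-$K_0$ terms add a finite quantity, completing the proof. The main obstacle is bookkeeping: several residuals carry a growing factor $1/\gamma_k=\beta_k/\alpha_k$, and one must verify that each is compensated either by an $\mathbb{E}[\|\tilde{\mathbf{w}}_k\|_2^2]=O((k+1)^{-\delta_2})$ factor from Theorem~\ref{Theorem:Consensus}, by an $\alpha_k^2$ from the stochastic noise, or by the $\alpha_k^2$ penalty forced by the triggering rule \eqref{TriggCond}. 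The exponent arithmetic only closes because Assumption~\ref{Assump:AlphaBeta} simultaneously imposes $\delta_2+\delta_1/2>1$, $\delta_2>1/2$, and $3\delta_1<\delta_2$.
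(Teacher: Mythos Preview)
Your proposal is correct and follows essentially the same route as the paper: both treat $V(\gamma_k,\mathbf{w}_k)$ as a Lyapunov function, split $V(\gamma_{k+1},\mathbf{w}_{k+1})-V(\gamma_k,\mathbf{w}_k)$ into a fixed-$\gamma$ descent term plus the $\gamma$-shift $(\tfrac{1}{2\gamma_{k+1}}-\tfrac{1}{2\gamma_k})\mathbf{w}_{k+1}^\top(\mathcal{L}\otimes I_{d_w})\mathbf{w}_{k+1}$, bound the descent via Lemma~\ref{Lemma:Lipz} with effective Lipschitz constant $L+\sigma_{\max}(\mathcal{L})/\gamma_k$, control the event-trigger residuals through \eqref{TriggCond}, and handle the $\gamma$-shift via the consensus rate of Theorem~\ref{Theorem:Consensus}. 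The only cosmetic differences are that you invoke Young's inequality explicitly where the paper bounds the cross terms directly, and you telescope by hand whereas the paper packages the conclusion through the Robbins--Siegmund lemma; the exponent bookkeeping and the role of Assumption~\ref{Assump:AlphaBeta} are identical.
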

%
%
\begin{theorem}\label{Theorem:Convergence}
  For the distributed event-triggered SGD algorithm \eqref{Eq:DSG} under Assumptions~1-7, we have
      \begin{align}
      \sum\limits_{k=0}^{\infty} \, \mathbb{E} \left[ \left\| \mathbf{w}_{k + 1} - \mathbf{w}_{k} \right\|_2^2 \right] < \infty\quad\textnormal{and}\label{InEq:DSG3}\\
      \lim_{k\rightarrow\infty} \, \mathbb{E} \left[ \left\| {\mathbf{w}}_{k + 1} - {\mathbf{w}}_{k} \right\|_2^2 \right] =0.\label{ConvergenceEqn}
    \end{align}
\end{theorem}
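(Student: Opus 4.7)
The plan is to decompose the one-step increment directly from \eqref{Eq:DSG1a} and argue summability term by term. Writing
$$\mathbf{w}_{k+1}-\mathbf{w}_k = -\beta_k(\mathcal{L}\otimes I_{d_w})\tilde{\mathbf{w}}_k + \beta_k(\mathcal{L}\otimes I_{d_w})\mathbf{e}_k - \alpha_k\mathbf{g}(\mathbf{w}_k,\bm{\xi}_k),$$
where I used $(\mathcal{L}\otimes I_{d_w})\mathbf{w}_k = (\mathcal{L}\otimes I_{d_w})\tilde{\mathbf{w}}_k$ (the Laplacian annihilates the average), and then applying the elementary bound $\|a+b+c\|_2^2 \leq 3(\|a\|_2^2+\|b\|_2^2+\|c\|_2^2)$, it suffices to show that each of $\beta_k^2\sigma_{\max}^2(\mathcal{L})\,\mathbb{E}\|\tilde{\mathbf{w}}_k\|_2^2$, $\beta_k^2\sigma_{\max}^2(\mathcal{L})\,\mathbb{E}\|\mathbf{e}_k\|_2^2$, and $\alpha_k^2\,\mathbb{E}\|\mathbf{g}(\mathbf{w}_k,\bm{\xi}_k)\|_2^2$ defines a convergent series over $k$.

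For the first piece, Theorem~\ref{Theorem:Consensus} gives $\mathbb{E}\|\tilde{\mathbf{w}}_k\|_2^2 = O((k+1)^{-\delta_2})$, so the summand is $O((k+1)^{-(2\delta_1+\delta_2)})$; Assumption~\ref{Assump:AlphaBeta} yields $\delta_2 > 1-\delta_1/2$, whence $2\delta_1+\delta_2 > 1+3\delta_1/2 > 1$ and the series converges. For the second piece, the triggering rule \eqref{TriggCond} enforces $\|\bm{e}_i(k)\|_1 \leq \upsilon_0\alpha_k$ (up to a bounded overshoot absorbed at trigger instants), so $\|\mathbf{e}_k\|_2^2 \leq n\upsilon_0^2\alpha_k^2$ via $\|\cdot\|_2 \leq \|\cdot\|_1$, and $\sum_k \beta_k^2\mathbb{E}\|\mathbf{e}_k\|_2^2 \leq n\upsilon_0^2\sum_k \alpha_k^2\beta_k^2 < \infty$ by square-summability of $\alpha_k$ and boundedness of $\beta_k$. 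For the third piece, Assumption~\ref{Assump:Grad2} gives $\mathbb{E}_\xi\|\mathbf{g}\|_2^2 \leq \bar{\mu}_{v_1} + \bar{\mu}_{v_2}\|\nabla F(\mathbf{w}_k)\|_2^2$, and from \eqref{dVk} the triangle inequality yields
$$\|\nabla F(\mathbf{w}_k)\|_2^2 \leq 2\|\nabla V(\gamma_k,\mathbf{w}_k)\|_2^2 + \frac{2\sigma_{\max}^2(\mathcal{L})}{\gamma_k^2}\|\tilde{\mathbf{w}}_k\|_2^2.$$
Multiplying by $\alpha_k^2$ and exploiting the crucial identity $\alpha_k^2/\gamma_k^2 = \beta_k^2$ produces
$$\alpha_k^2\mathbb{E}\|\mathbf{g}\|_2^2 \leq \bar{\mu}_{v_1}\alpha_k^2 + 2\bar{\mu}_{v_2}\alpha_k\bigl(\alpha_k\mathbb{E}\|\nabla V\|_2^2\bigr) + 2\bar{\mu}_{v_2}\sigma_{\max}^2(\mathcal{L})\beta_k^2\mathbb{E}\|\tilde{\mathbf{w}}_k\|_2^2,$$
whose three summands are respectively summable by square-summability of $\alpha_k$, by boundedness of $\alpha_k$ combined with Theorem~\ref{Theorem:InfSum}, and by the first-piece argument.

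Adding the three contributions and summing over $k \geq 0$ gives \eqref{InEq:DSG3}, and \eqref{ConvergenceEqn} then follows because any convergent series of non-negative terms has vanishing tails, hence vanishing terms. The main obstacle I anticipate is the stochastic-gradient piece: Assumption~\ref{Assump:Grad2} couples $\mathbb{E}\|\mathbf{g}\|_2^2$ with $\|\nabla F\|_2^2$, while Theorem~\ref{Theorem:InfSum} only controls $\|\nabla V\|_2^2$, so one must split $\|\nabla F\|_2^2$ into a $\nabla V$ part and a consensus part. The identity $\alpha_k^2/\gamma_k^2 = \beta_k^2$ is the key cancellation that routes the residual consensus term of $\|\nabla F\|_2^2$ back to an already-summable series, closing the loop; without it, the $1/\gamma_k^2$ inflation would defeat the consensus decay rate from Theorem~\ref{Theorem:Consensus}.
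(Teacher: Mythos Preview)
Your argument is correct. The paper takes a slightly different route: instead of a three-way split, it factors the increment as
\[
\mathbf{w}_{k+1}-\mathbf{w}_k=-\alpha_k\Bigl(\mathbf{g}(\mathbf{w}_k,\bm{\xi}_k)+\tfrac{1}{\gamma_k}(\mathcal{L}\otimes I_{d_w})\mathbf{w}_k-\tfrac{1}{\gamma_k}(\mathcal{L}\otimes I_{d_w})\mathbf{e}_k\Bigr),
\]
peels off only the $\mathbf{e}_k$ term with a two-way inequality, and then invokes the intermediate second-moment bound \eqref{Eq:2ndGrad1} (established inside the proof of Theorem~\ref{Theorem:InfSum}) to pass directly from $\mathbb{E}_\xi\bigl\|\mathbf{g}+\gamma_k^{-1}(\mathcal{L}\otimes I_{d_w})\mathbf{w}_k\bigr\|_2^2$ to $\mu_{v_1}+\mu_{v_2}\|\nabla V(\gamma_k,\mathbf{w}_k)\|_2^2$. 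Summability then follows immediately from Theorem~\ref{Theorem:InfSum} and $\sum_k\alpha_k^2<\infty$, with no appeal to Theorem~\ref{Theorem:Consensus}. Your approach is more explicit and self-contained: by isolating the consensus piece $\beta_k(\mathcal{L}\otimes I_{d_w})\tilde{\mathbf{w}}_k$ first and then decomposing $\|\nabla F\|_2^2$ into a $\nabla V$-part and a consensus residual via the identity $\alpha_k^2/\gamma_k^2=\beta_k^2$, you replace the reliance on \eqref{Eq:2ndGrad1} with the raw Assumption~\ref{Assump:Grad2} plus the consensus rate of Theorem~\ref{Theorem:Consensus}. The trade-off is that you invoke one additional prior result (Theorem~\ref{Theorem:Consensus}), but in return you avoid the auxiliary inequality \eqref{Eq:2ndGrad1} and make transparent exactly where the exponent condition $\delta_1/2+\delta_2>1$ is consumed (your verification that $2\delta_1+\delta_2>1$).
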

%
%
See supplementary material section for the proof of Theorem~\ref{Theorem:Convergence}. Define $\bar{\mathbf{w}}_{k} = \frac{1}{n} \left(\mathbf{1}_n \mathbf{1}_n^\top \otimes I_{d_w} \right) \mathbf{w}_{k}$ and $ \overline{\nabla F} (\mathbf{w}_k) = \frac{1}{n} \left(\mathbf{1}_n \mathbf{1}_n^\top \otimes I_{d_w} \right) \nabla F(\mathbf{w}_k)$. Note that $\| \overline{\nabla F} (\mathbf{w}_k) \|_2^2$ $=$ $\frac{1}{n}\| \left( \mathbf{1}_n^\top \otimes I_{d_w} \right) \nabla F(\mathbf{w}_k)\|_2^2$ $=$ $\frac{1}{n}\| \sum_{i=1}^n \nabla f_i(\bm{w}_i(k))\|_2^2$.
\begin{theorem}\label{Theorem:SummableGrad}
  For the distributed event-triggered SGD algorithm \eqref{Eq:DSG} under Assumptions~1-7, we have
  \begin{equation}\label{Eqn:SummableDFbar}
    \sum\limits_{k=0}^{\infty} \, \alpha_k\, \mathbb{E}\left[ \left\| \overline{\nabla F} (\mathbf{w}_k) \right\|^2_2 \right] < \infty.
  \end{equation}
\end{theorem}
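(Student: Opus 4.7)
The plan is to deduce Theorem~\ref{Theorem:SummableGrad} directly from Theorem~\ref{Theorem:InfSum} by observing that $\overline{\nabla F}(\mathbf{w}_k)$ is nothing but the ``consensus component'' of $\nabla V(\gamma_k,\mathbf{w}_k)$. Concretely, introduce the averaging projector $\Pi \triangleq \tfrac{1}{n}\mathbf{1}_n\mathbf{1}_n^\top$, so that $\overline{\nabla F}(\mathbf{w}_k) = (\Pi\otimes I_{d_w})\nabla F(\mathbf{w}_k)$. The first step is to apply $(\Pi\otimes I_{d_w})$ to the identity \eqref{dVk}:
\begin{equation*}
(\Pi\otimes I_{d_w})\,\nabla V(\gamma_k,\mathbf{w}_k)
 = (\Pi\otimes I_{d_w})\,\nabla F(\mathbf{w}_k)
   + \frac{1}{\gamma_k}\bigl(\Pi\mathcal{L}\otimes I_{d_w}\bigr)\mathbf{w}_k.
\end{equation*}

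The crucial observation is that the second term vanishes. Under Assumption~\ref{Assump:Graph}, $\mathcal{G}$ is undirected so $\mathcal{L}$ is symmetric with $\mathcal{L}\mathbf{1}_n = 0$; consequently $\mathbf{1}_n^\top\mathcal{L}=0$ and therefore $\Pi\mathcal{L} = \tfrac{1}{n}\mathbf{1}_n(\mathbf{1}_n^\top\mathcal{L}) = 0$. This yields the clean identity
\begin{equation*}
\overline{\nabla F}(\mathbf{w}_k) \;=\; (\Pi\otimes I_{d_w})\,\nabla V(\gamma_k,\mathbf{w}_k).
\end{equation*}
Since $\Pi$ is an orthogonal projector, $\|\Pi\otimes I_{d_w}\|_2 = 1$, and hence
\begin{equation*}
\bigl\|\overline{\nabla F}(\mathbf{w}_k)\bigr\|_2^2
 \;\leq\; \bigl\|\nabla V(\gamma_k,\mathbf{w}_k)\bigr\|_2^2.
\end{equation*}

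Taking expectations, multiplying by $\alpha_k\geq 0$, and summing over $k$, the conclusion \eqref{Eqn:SummableDFbar} then follows immediately from Theorem~\ref{Theorem:InfSum}. There is no real obstacle here beyond spotting that the Laplacian piece in $\nabla V$ lies entirely in the disagreement subspace and is therefore annihilated by the averaging projection; once this is noted, the summability is inherited from the stronger summability statement already established for $\nabla V(\gamma_k,\mathbf{w}_k)$.
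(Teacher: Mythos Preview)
Your argument is correct and is in fact cleaner than the paper's. Both proofs rest on the same algebraic fact---that $\Pi\mathcal{L}=0$ for the averaging projector $\Pi=\tfrac{1}{n}\mathbf{1}_n\mathbf{1}_n^\top$---but you apply it directly to the identity \eqref{dVk}, obtaining $\overline{\nabla F}(\mathbf{w}_k)=(\Pi\otimes I_{d_w})\nabla V(\gamma_k,\mathbf{w}_k)$ and then simply invoking $\|\Pi\otimes I_{d_w}\|_2=1$ together with Theorem~\ref{Theorem:InfSum}. The paper instead takes a detour through the increments: it first writes $\mathbb{E}_\xi[\mathbf{w}_{k+1}-\mathbf{w}_k]=-\alpha_k\nabla V(\gamma_k,\mathbf{w}_k)+\beta_k(\mathcal{L}\otimes I_{d_w})\mathbf{e}_k$, bounds this to deduce $\sum_k\alpha_k^{-1}\mathbb{E}\bigl[\|\mathbb{E}_\xi[\mathbf{w}_{k+1}-\mathbf{w}_k]\|_2^2\bigr]<\infty$ from Theorem~\ref{Theorem:InfSum} (after handling the triggering error via $\|\mathbf{e}_k\|_1<n\upsilon_0\alpha_k$ and the summability of $\alpha_k\beta_k^2$), then averages the increment and uses $\Pi\mathcal{L}=0$ to identify $\mathbb{E}_\xi[\bar{\mathbf{w}}_{k+1}-\bar{\mathbf{w}}_k]=-\alpha_k\overline{\nabla F}(\mathbf{w}_k)$. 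Your route avoids the triggering error $\mathbf{e}_k$ and the auxiliary increment estimate entirely; the paper's route yields the intermediate summability of $\alpha_k^{-1}\mathbb{E}\bigl[\|\mathbb{E}_\xi[\mathbf{w}_{k+1}-\mathbf{w}_k]\|_2^2\bigr]$ as a by-product, but this is not used elsewhere, so nothing is lost by your shortcut.
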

%
%
Theorem \ref{Theorem:SummableGrad} establishes results about the weighted sum of expected average gradient norm and the key takeaway from this result is that, for the distributed SGD in \eqref{Eq:DSG1} or \eqref{Eq:DSG} with appropriate step-sizes, the expected average gradient norms cannot stay bounded away from zero (See Theorem 9 of \citep{Bottou2018SIAM}), i.e., $\liminf_{k\rightarrow\infty}\,\mathbb{E}\left[ \left\| \overline{\nabla F} (\mathbf{w}_k) \right\|^2_2 \right] = 0$ or equivalently $\liminf_{k\rightarrow\infty}\,\mathbb{E}\left[ \left\| \sum_{i=1}^n \nabla f_i(\bm{w}_i(k)) \right\|^2_2 \right] = 0$. The rate of such weak convergence results can be obtained as shown in Theorem~\ref{Theorem:ConvergenceRate}.
\begin{theorem}\label{Theorem:ConvergenceRate}
 Let $\{ \mathbf{w}_k\}_{k=0}^{K}$ be generated according to the distributed event-triggered SGD given in \eqref{Eq:DSG} under Assumptions~1-7. Then for $\delta_2 = 1$ we have
 \begin{align}
     \mathbb{E}\left[ \left\| \sum_{i=1}^n \nabla f_i(\,\bm{z}^{\tiny{K}}_i\,) \right\|_2^2 \right]  &= O \left(\frac{1}{\log(K+1)}\right)
 \end{align}
 and for $\delta_2 \in (0.5, \,\, 1)$ we have
\begin{align}
    \mathbb{E}\left[ \left\| \sum_{i=1}^n \nabla f_i(\,\bm{z}^{\tiny{K}}_i\,) \right\|_2^2 \right]  &= O \left(\frac{1}{(K+1)^{1-\delta_2}}\right).
\end{align}
Here $\mathbf{z}^{\tiny{K}} \triangleq \begin{bmatrix} (\bm{z}^{\tiny{K}}_1)^\top & \ldots & (\bm{z}^{\tiny{K}}_n)^\top \end{bmatrix}^\top$ is a random sample from $\{ \mathbf{w}_k\}_{k=0}^{K}$ with probability $\mathbb{P}\left( \mathbf{z}^{\tiny{K}} = \mathbf{w}_k\right) = \frac{\alpha_k}{\sum_{j=0}^{K} \, \alpha_j\,}$. 
\end{theorem}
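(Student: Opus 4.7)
The plan is to reduce the claim to a direct computation that combines the sampling law for $\mathbf{z}^K$ with the summability result in Theorem~\ref{Theorem:SummableGrad}. Since $\mathbb{P}(\mathbf{z}^K=\mathbf{w}_k)=\alpha_k/\sum_{j=0}^{K}\alpha_j$, the expectation over both the randomness in $\{\mathbf{w}_k\}_{k=0}^{K}$ and the sampling index is
\begin{equation*}
\mathbb{E}\!\left[\Bigl\|\textstyle\sum_{i=1}^{n}\nabla f_i(\bm{z}^{K}_i)\Bigr\|_2^2\right]
=\frac{\sum_{k=0}^{K}\alpha_k\,\mathbb{E}\!\left[\bigl\|\sum_{i=1}^{n}\nabla f_i(\bm{w}_i(k))\bigr\|_2^2\right]}{\sum_{k=0}^{K}\alpha_k}.
\end{equation*}
Using the identity $\|\sum_{i=1}^{n}\nabla f_i(\bm{w}_i(k))\|_2^2 = n\,\|\overline{\nabla F}(\mathbf{w}_k)\|_2^2$ noted just before the theorem, the numerator equals $n\sum_{k=0}^{K}\alpha_k\,\mathbb{E}[\|\overline{\nabla F}(\mathbf{w}_k)\|_2^2]$, which by Theorem~\ref{Theorem:SummableGrad} is bounded above by a finite constant $C$ independent of $K$.

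Next I would estimate the denominator $S_K=\sum_{k=0}^{K}\alpha_k=a\sum_{k=0}^{K}(k+1)^{-\delta_2}$ using the standard integral comparison $\int_{1}^{K+2}t^{-\delta_2}\,dt \le \sum_{k=0}^{K}(k+1)^{-\delta_2}$. For $\delta_2=1$ this yields $S_K\ge a\log(K+2)$, so the ratio $C/S_K$ is $O(1/\log(K+1))$. For $\delta_2\in(1/2,1)$ the same comparison gives $S_K\ge \frac{a}{1-\delta_2}\bigl((K+2)^{1-\delta_2}-1\bigr)$, hence $C/S_K=O((K+1)^{\delta_2-1})$. Putting the two pieces together delivers the two claimed rates.

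I do not expect a real obstacle here: the theorem is a packaging statement that converts the summability of $\{\alpha_k\,\mathbb{E}\|\overline{\nabla F}(\mathbf{w}_k)\|_2^2\}$ into a rate for a randomly drawn iterate. The only points requiring care are (i) that the normalization factor $n$ coming from the $\overline{\nabla F}$ identity is absorbed into the finite constant guaranteed by Theorem~\ref{Theorem:SummableGrad}, and (ii) that the step-size exponent $\delta_2$ falls in the regime dictated by Assumption~\ref{Assump:AlphaBeta} (so the partial sums $S_K$ indeed diverge with the prescribed orders). Both are immediate from the assumptions already in force, so the argument is essentially a two-line calculation followed by elementary asymptotics of partial sums of $(k+1)^{-\delta_2}$.
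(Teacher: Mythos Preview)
Your proposal is correct and follows essentially the same route as the paper: express the expectation at the random iterate as the $\alpha_k$-weighted average of $\mathbb{E}\bigl[\|\sum_i\nabla f_i(\bm{w}_i(k))\|_2^2\bigr]$, bound the numerator by a finite constant via the summability result (the paper invokes Theorem~\ref{Theorem:InfSum} whereas you invoke Theorem~\ref{Theorem:SummableGrad}, which is the more direct reference), and lower-bound $\sum_{k=0}^{K}\alpha_k$ by an integral comparison to obtain the two rates. The only cosmetic differences are the exact integration limits used in the comparison and the explicit handling of the factor $n$; neither affects the argument.
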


Finally, we present the following result to illustrate that stronger convergence results follows from the continuity assumption on the Hessian, which has not been utilized in our analysis so far.
\begin{assumption}\label{Assump:LipzHess}
  The Hessians $\nabla^2 f_i(\,\cdot\,)$ $:\mathbb{R}^{d_w}\mapsto \mathbb{R}^{d_w\times d_w}$ are Lipschitz continuous with Lipschitz constants $L_{H_i}$, i.e., $\forall \,\bm{w}_a,\,\bm{w}_b\in\mathbb{R}^{d_w}, \, i=1,\ldots,n$, we have
\begin{equation}\label{Eq:LipzHess}
  \| \nabla^2 f_i(\,\bm{w}_a\,) - \nabla^2 f_i(\,\bm{w}_b\,) \|_2 \leq L_{H_i} \|\bm{w}_a-\bm{w}_b\|_2.
\end{equation}
\end{assumption}
It follows from Assumption~\ref{Assump:LipzHess} that the Hessian $\nabla^2 F(\cdot)$ is Lipschitz continuous, i.e., $\forall \,\mathbf{w}_a,\,\mathbf{w}_b\in\mathbb{R}^{n d_w}$,
\begin{equation}\label{Eq:LipzHess1}
  \| \nabla^2 F(\,\mathbf{w}_a\,) - \nabla^2 F(\,\mathbf{w}_b\,) \|_2 \leq L_H \|\mathbf{w}_a-\mathbf{w}_b\|_2,
\end{equation}
with constant $L_H = \max\limits_i\{L_{H_i}\}$.
\begin{theorem}\label{Theorem:OptCond}
  For the distributed SGD algorithm \eqref{Eq:DSG} under Assumptions 1-8 we have
    \begin{align}
        \lim_{k\rightarrow\infty}\, \mathbb{E}\left[\, \left\| \overline{\nabla F} (\mathbf{w}_k) \right\|^2_2 \,\right] = 0\quad\textnormal{and}\label{InEq:Opt}\\
        \lim_{k\rightarrow\infty}\, \mathbb{E}\left[\, \left\| \sum_{i=1}^n \nabla f_i(\bm{w}_i(k)) \right\|^2_2 \,\right] = 0.
    \end{align}
\end{theorem}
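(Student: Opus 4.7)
Set $a_k := \mathbb{E}\bigl[\|\overline{\nabla F}(\mathbf{w}_k)\|_2^2\bigr]$. Theorem~\ref{Theorem:SummableGrad} already gives $\sum_{k\geq 0}\alpha_k a_k < \infty$, and since $\sum_k \alpha_k = \infty$ under Assumption~\ref{Assump:AlphaBeta}, one immediately obtains $\liminf_{k\to\infty} a_k = 0$. The only remaining task is to upgrade this to $\lim_{k\to\infty} a_k = 0$, and this is precisely the role of Assumption~\ref{Assump:LipzHess}: Lipschitz continuity of the Hessian supplies enough extra smoothness to bound consecutive differences of $a_k$ and thereby preclude persistent oscillation back to strictly positive values.

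The plan is to derive a one-step inequality $a_{k+1} \leq a_k + C_1\alpha_k a_k + \eta_k$ with $\sum_k \eta_k < \infty$. I would begin from the second-order Taylor expansion
\begin{equation*}
\nabla F(\mathbf{w}_{k+1}) = \nabla F(\mathbf{w}_k) + \nabla^2 F(\mathbf{w}_k)(\mathbf{w}_{k+1}-\mathbf{w}_k) + R_k, \qquad \|R_k\|_2 \leq \tfrac{L_H}{2}\|\mathbf{w}_{k+1}-\mathbf{w}_k\|_2^2,
\end{equation*}
which Assumption~\ref{Assump:LipzHess} supplies. Applying the averaging projector $\tfrac{1}{n}(\mathbf{1}_n\mathbf{1}_n^\top\otimes I_{d_w})$ produces the same expansion for $\overline{\nabla F}$; squaring, taking conditional expectation given $\mathcal{F}_k$, substituting the update $\mathbf{w}_{k+1}-\mathbf{w}_k = -\beta_k(\mathcal{L}\otimes I_{d_w})\mathbf{w}_k + \beta_k(\mathcal{L}\otimes I_{d_w})\mathbf{e}_k - \alpha_k\mathbf{g}(\mathbf{w}_k,\bm{\xi}_k)$, and invoking Assumption~\ref{Assump:Grad1} leaves a dominant term $-2\alpha_k\,\overline{\nabla F}(\mathbf{w}_k)^\top\nabla^2 F(\mathbf{w}_k)\nabla F(\mathbf{w}_k)$. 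After splitting $\nabla F = \overline{\nabla F} + (\nabla F - \overline{\nabla F})$ and using $\|\nabla^2 F\|_2 \leq L$, this is bounded by $C_1\alpha_k a_k$ plus a residual; the $\beta_k$ consensus-drift piece is $O(\beta_k\|\tilde{\mathbf{w}}_k\|_2)$ by Theorem~\ref{Theorem:Consensus}; the triggering piece is $O(\beta_k\alpha_k)$ by \eqref{TriggCond}; the $\alpha_k^2$ variance term is handled by Assumption~\ref{Assump:Grad2}; and the quadratic/quartic remainders inherited from $R_k$ are dominated by $\mathbb{E}\|\mathbf{w}_{k+1}-\mathbf{w}_k\|_2^2$ and $\mathbb{E}\|\mathbf{w}_{k+1}-\mathbf{w}_k\|_2^4$, whose summability follows from Theorem~\ref{Theorem:Convergence} and the exponent conditions of Assumption~\ref{Assump:AlphaBeta}. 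All such contributions are collected into $\eta_k$.

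With the recursion in hand, the standard stochastic-approximation contradiction argument closes the proof. Assume $\limsup_k a_k \geq 3\varepsilon$ for some $\varepsilon > 0$; since $\liminf_k a_k = 0$, there exist index pairs $\ell_j < m_j \to \infty$ with $a_{\ell_j} \leq \varepsilon$, $a_{m_j} \geq 2\varepsilon$, and $\varepsilon \leq a_k \leq 2\varepsilon$ for all $\ell_j < k < m_j$. Telescoping the recursion across each excursion gives
\begin{equation*}
\varepsilon \leq a_{m_j} - a_{\ell_j} \leq 2\varepsilon\, C_1 \sum_{k=\ell_j}^{m_j-1}\alpha_k + \sum_{k=\ell_j}^{m_j-1}\eta_k;
\end{equation*}
summability of $\eta_k$ forces the last sum below $\varepsilon/2$ for large $j$, and hence $\sum_{k=\ell_j}^{m_j-1}\alpha_k \geq 1/(4C_1)$. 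Therefore $\sum_k \alpha_k a_k \geq \varepsilon\sum_{j\text{ large}} 1/(4C_1) = \infty$, contradicting Theorem~\ref{Theorem:SummableGrad}. Thus $\limsup_k a_k = 0$, establishing the first limit; the second follows at once from the identity $\|\overline{\nabla F}(\mathbf{w}_k)\|_2^2 = \tfrac{1}{n}\|\sum_{i=1}^n\nabla f_i(\bm{w}_i(k))\|_2^2$.

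The main obstacle is the first step---producing the one-step recursion cleanly. The trickiest piece is the cross-term involving the ``gradient-consensus deviation'' $\nabla F - \overline{\nabla F}$, whose blocks $\nabla f_i(\bm{w}_i(k))-\tfrac{1}{n}\sum_j\nabla f_j(\bm{w}_j(k))$ do \emph{not} vanish as the states reach consensus because the $f_i$'s are distinct functions. Handling this cleanly requires combining the Hessian bound $\|\nabla^2 F\|_2\leq L$ with the uniform gradient bound $\|\nabla f_i\|_2\leq L_i^0$ that follows from Lipschitz continuity of $f_i$ in Assumption~\ref{Assump:Lipz}, and absorbing the resulting residual into $\eta_k$ via the summability of $\mathbb{E}\|\mathbf{w}_{k+1}-\mathbf{w}_k\|_2^2$ from Theorem~\ref{Theorem:Convergence}. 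Verifying that every piece of $\eta_k$ is indeed summable under the exponent constraints $3\delta_1<\delta_2$ and $\delta_1/2+\delta_2>1$ of Assumption~\ref{Assump:AlphaBeta} is where the bulk of the technical bookkeeping sits.
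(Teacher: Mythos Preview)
Your overall strategy---establish a one-step recursion for $a_k := \mathbb{E}\bigl[\|\overline{\nabla F}(\mathbf{w}_k)\|_2^2\bigr]$, then combine $\liminf_k a_k = 0$ with control of the increments to force $\lim_k a_k = 0$---is sound, but your handling of the cross term you yourself flag as ``trickiest'' does not close, and the paper takes a different and cleaner route.

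\textbf{The gap.} After the split $\nabla F = \overline{\nabla F} + (\nabla F - \overline{\nabla F})$, the residual cross term is bounded by
\[
\bigl|{-2\alpha_k}\,\overline{\nabla F}(\mathbf{w}_k)^\top\nabla^2 F(\mathbf{w}_k)\bigl(\nabla F(\mathbf{w}_k)-\overline{\nabla F}(\mathbf{w}_k)\bigr)\bigr|
\;\le\; 2L\,\alpha_k\,\|\overline{\nabla F}(\mathbf{w}_k)\|_2\cdot\|\nabla F(\mathbf{w}_k)-\overline{\nabla F}(\mathbf{w}_k)\|_2
\;\le\; C\,\alpha_k\sqrt{a_k},
\]
since, as you correctly note, $\|\nabla F - \overline{\nabla F}\|_2$ is merely bounded and does \emph{not} vanish. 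There is no way to make $\sum_k \alpha_k\sqrt{a_k}$ summable a priori, and the summability of $\mathbb{E}\|\mathbf{w}_{k+1}-\mathbf{w}_k\|_2^2$ from Theorem~\ref{Theorem:Convergence} has no bearing on this term---the two quantities are not related. So the claimed recursion $a_{k+1}\le a_k + C_1\alpha_k a_k + \eta_k$ with $\sum_k\eta_k<\infty$ is not obtainable along your line. Your excursion argument can be \emph{repaired}: keep $C_2\alpha_k\sqrt{a_k}$ as a separate term, note that on each excursion $\sqrt{a_k}\le\sqrt{2\varepsilon}$, and the telescoped bound still forces $\sum_{k=\ell_j}^{m_j-1}\alpha_k$ to be bounded below by a positive constant, yielding the same contradiction with Theorem~\ref{Theorem:SummableGrad}. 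But as written the plan does not go through.

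\textbf{The paper's route.} The paper sidesteps the split entirely. It sets $G(\mathbf{w}) = \|\overline{\nabla F}(\mathbf{w})\|_2^2$, shows $\nabla G$ is Lipschitz (this is where Assumption~\ref{Assump:LipzHess} enters), and applies Lemma~\ref{Lemma:Lipz}. The key observation is the identity
\[
\nabla F(\mathbf{w}_k)^\top\mathcal{J} \;=\; \nabla V(\gamma_k,\mathbf{w}_k)^\top\mathcal{J},
\qquad \mathcal{J}=\tfrac{1}{n}(\mathbf{1}_n\mathbf{1}_n^\top\otimes I_{d_w}),
\]
valid because $(\mathcal{L}\otimes I_{d_w})\mathcal{J}=0$. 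This recasts the entire cross term as $-2\alpha_k\,\nabla V^\top\mathcal{J}\nabla^2 F\,\nabla V + 2\beta_k\,\nabla V^\top\mathcal{J}\nabla^2 F(\mathcal{L}\otimes I_{d_w})\mathbf{e}_k$, both pieces of which are dominated by $\alpha_k\|\nabla V\|_2^2$ and $\alpha_k\beta_k\|\nabla V\|_2$. These are summable in expectation by Theorem~\ref{Theorem:InfSum}, so one obtains $\mathbb{E}[G(\mathbf{w}_{k+1})]\le\mathbb{E}[G(\mathbf{w}_k)] + w_k$ with $\sum_k w_k<\infty$ and \emph{no} $\alpha_k a_k$ term at all. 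Lemma~\ref{Lemma:Robbins} (Robbins--Siegmund) then gives convergence of $\mathbb{E}[G(\mathbf{w}_k)]$ directly, and Theorem~\ref{Theorem:SummableGrad} together with $\sum_k\alpha_k=\infty$ pins the limit at zero---no contradiction argument is needed.
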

%
%

\begin{figure*}[ht]
  \begin{centering}
      {\includegraphics[width=.93\textwidth]{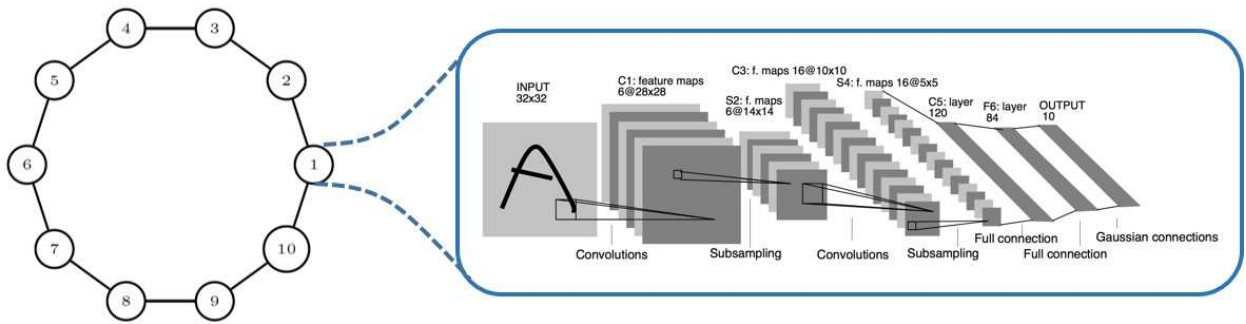}
      \caption{Network of 10 agents, each with its own LeNet-5~\citep{lecun1998gradient} \label{Fig:NNPic}}}
  \end{centering}
\end{figure*}

Similar to the centralized SGD~\citep{Bottou2018SIAM}, the analysis given here shows the mean-square convergence of the distributed algorithm to a critical point, which include the saddle points. Though SGD has shown to escape saddle points efficiently~\citep{Lee2017,2019arXiv190200247F,2019arXiv190204811J}, extension of such results for distributed SGD is currently nonexistent and is a topic for future research.

\section{Application to distributed supervised learning}\label{simulation}

We apply the proposed algorithm for distributedly training 10 neural network agents to recognize handwritten digits in images. Specifically, we use the MNIST\footnote{http://yann.lecun.com/exdb/mnist/} data set containing 60000 images of 10 digits (0-9) for training and 10000 images are used for testing. The 10 agents are connected in an undirected unweighted ring topology as shown in Figure~\ref{Fig:NNPic}. The 10-node ring was selected only since it is one of the least connected network (besides the path) and MNIST contains 10 classes. Proposed algorithm would work for any undirected graph as along as it is connected. 

Each agent aims to train its own neural network, which is a randomly initialized LeNet-5~\citep{lecun1998gradient}. During training, each agent broadcasts its weights to its neighbors at every iteration or aperiodically as described in the proposed algorithm. Here we conduct the following five experiments: (i) Centralized SGD, where a centralized version of the SGD is implemented by a central node having access to all 60000 training images from all classes; (ii) Distributed SGD-r, where all the agents broadcast their respective weights at every iteration, and each agent has access to 6000 training images, \textit{randomly} sampled from the entire training set, which forms the i.i.d. case; (iii) Distributed SGD-s, where all the agents broadcast their weights at every iteration, and each agent has access to the images corresponding to a \textit{single} class, which forms the non-i.i.d. case; (iv) DETSGRAD-r, where the agents aperiodically broadcast their weights using the triggering mechanism in \eqref{TriggCond}, and each agent has access to 6000 training images, \textit{randomly} sampled from the entire training set, i.e., i.i.d. case; (v) DETSGRAD-s, where the agents aperiodically broadcast their weights using the triggering mechanism in \eqref{TriggCond}, and each agent has access to the images corresponding to a \textit{single} class, i.e., non-i.i.d. case. In the single class case, for ease of programming, we set the number of training images available for each agent to 5421 (the minimum number of training images available in a single class, which is digit 5 in MNIST data set). Here we select $\alpha_k = \frac{0.1}{(\varepsilon k + 1)}$ and $\beta_k = \frac{0.2525}{(\varepsilon k + 1)^{1/10}}$, where $\varepsilon = 10^{-5}$ for Distributed SGD and DETSGRAD. We select $\alpha_k = \frac{0.001}{(\varepsilon k + 1)}$ for centralized SGD. Note that using a scale factor $\varepsilon$ does not affect the theoretical results provided in the previous sections. For the DETSGRAD experiments, we select the broadcast event trigger threshold $\upsilon_0\ = 0.2 \times N_{parameters}$, where $N_{parameters}$ is the total number of parameters in each neural network.
\begin{table*}[h!]
\begin{center}
\resizebox{.75\textwidth}{!}{
\begin{tabular}{@{}ccccccccccc@{}} 
 \toprule
 Agent & 1 & 2 & 3 & 4 & 5 & 6 & 7 & 8 & 9 & 10 \\ 
 \midrule
 Dist. SGD-r & 98.97 & 98.97 & 98.97 & 98.97 & 98.97 & 98.97 & 98.97 & 98.97 & 98.97 & 98.97 \\ 
 \midrule
 Dist. SGD-s & 98.86 & 98.86 & 98.86 & 98.87 & 98.86 & 98.86 & 98.86 & 98.87 & 98.85 & 98.87 \\
 \midrule
 DETSGRAD-r & 98.34 & 98.35 & 98.32 & 98.27 & 98.31 & 98.31 & 98.38 & 98.29 & 98.23 & 98.33 \\ 
 \midrule
 DETSGRAD-s & 98.46 & 98.49 & 98.49 & 98.51 & 98.5 & 98.45 & 98.13 & 98.49 & 98.42 & 98.51 \\
 \bottomrule
\end{tabular}}
\caption{Final classification accuracies (\%) of the 10 agents after 40 epochs (240000 iterations for the random sampling/i.i.d. case and 216840 iterations for the single class/non-i.i.d case) using different algorithms. The final accuracy of a single agent using centralized SGD after 10 epochs (600000 iterations) is 98.63\%.}
\label{acctab}
\end{center}
\end{table*}

\begin{table*}[h]
\begin{center}
\resizebox{0.75\textwidth}{!}{
\begin{tabular}{@{}ccccccccccc@{}} 
 \toprule
 Agent & 1 & 2 & 3 & 4 & 5 & 6 & 7 & 8 & 9 & 10 \\ 
 \midrule
 DETSGRAD-r & 61759 & 61455 & 61504 & 61636 & 61738 & 61822 & 61746 & 61712 & 61850 & 61795 \\ 
 \midrule
 DETSGRAD-s & 71756 & 71718 & 71762 & 71983 & 71976 & 71773 & 71762 & 72159 & 72233 & 72208 \\
 \bottomrule
\end{tabular}}
\caption{Total number of event-triggered broadcast events for the 10 agents after 40 epochs. The total number of continuous broadcast events for each agent after 40 epochs is 240000 in the random sampling case, and 216840 in the single class case.}
\label{neventtab}
\end{center}
\end{table*}

The plots of the empirical risk vs. the iterations (parameter update steps), illustrated in Figure~\ref{lossplots} (see supplementary material), show the convergence of the proposed algorithm. The final test accuracies of the 10 agents after 40 training epochs using different algorithms and different training settings are shown in Table~\ref{acctab}. Results obtained here indicate that regardless of how the data are distributed (random or single class), the agents are able to train their network and the distributedly trained networks are able to yield similar performance as that of a centrally trained network. More importantly, in the single class case, agents were able to recognize images from all 10 classes even though they had access to data corresponding only to a single class during the training phase. This result has numerous implications for the machine learning community, specifically for federated multi-task learning under information flow constraints. 

The total number of event-triggered parameter broadcast events for the 10 agents using the DETSGRAD algorithm are shown in Table~\ref{neventtab}. In the random sampling case, by employing broadcast event-triggering mechanism, we are able to reduce the inter-agent communications from 240000 to an average of 61702 over 40 epochs leading to a reduction of 74.2\% in network communications. In the single class case, the agents broadcast the parameters continuously for the first 4 epochs, after which the event-trigger mechanism is started. Here, we are able to reduce the parameter broadcasts for each agent from 216840 to an average of 71933 over 40 epochs leading to a reduction of 66.8\% in network communications. Yet, as can be seen in Table~\ref{acctab}, DETSGRAD gives similar classification performance as distributed SGD with continuous parameter sharing with significant reduction in network communications. The fractions of the broadcast events for the 10 agents over 40 epochs are presented in Figure~\ref{pbplots} (see supplementary material). As expected, the number of broadcast events reduces with the increase in epoch number as the agents converge to the critical point of the empirical risk function. 


\section{Conclusion}\label{sec:conclusion}

This paper presented the development of a distributed stochastic gradient descent algorithm with event-triggered communication mechanism for solving non-convex optimization problems. We presented a novel communication triggering mechanism, which allowed the agents to decidedly reduce the communication overhead by communicating only when the local model has significantly changed from previously communicated model. We presented the sufficient conditions on algorithm step-sizes to guarantee asymptotic mean-square convergence of the proposed algorithm to a critical point and provided the convergence rate of the proposed algorithm. We applied the developed algorithm to a distributed supervised-learning problem, in which a set of 10 networked agents collaboratively train their individual neural nets to recognize handwritten digits in images. Results indicate that regardless of how the data are distributed, the agents are able to train their neural network and the distributedly trained networks are able to yield similar performance to that of a centrally trained network. Numerical results also show that the proposed event-triggered communication mechanism significantly reduced the inter-agent communication wile yielding similar performance to that of a distributedly trained network with constant communication.

\small
\bibliographystyle{aaai}
\bibliography{Biblio}

\newpage
\onecolumn
\normalsize

\beginsupplement

\setlength{\abovedisplayskip}{8pt}
\setlength{\belowdisplayskip}{8pt}

\begin{center}
{\LARGE{\bf Distributed Deep Learning with Event-Triggered Communication}}
\end{center}
\vspace{3pt}
\begin{center}{\Large{\bf(Supplementary Material)}}
\end{center}
\vspace{5pt}

Pseudo-code of the proposed distributed event-triggered SGD is given in Algorithm~\ref{Algorithm1}.

\begin{algorithm}
 \caption{DETSGRAD algorithm}
 \label{Algorithm1}
 \begin{algorithmic}[2]
  \STATE \textit{Input} : $a$, $b$, $\upsilon_0$, $\delta_1$ and $\delta_2$
  \\ \textit{Initialization} : $\mathbf{w}(0) = \begin{bmatrix} {\bm{w}}_1^\top(0) & \ldots & {\bm{w}}_n^\top(0)\end{bmatrix}^\top$
  \FOR {Agent $i = 1$ to $n$}
  \STATE \textit{Sample  } $\bm{\xi}_i(0)$    \,\,\,\&\,\,\,   \textit{compute } ${g}_i\left( \bm{w}_i(0), \bm{\xi}_i(0) \right)$ \label{Step04}
  \STATE \textit{Send  } $\bm{w}_i(0)$  \,\,\,\&\,\,\,  \textit{let \,\,}$\hat{\bm{w}}_i^{(i)} = \bm{w}_i(0)$
  \STATE \textit{Receive  } $\bm{w}_j(0)$  \,\,\,\&\,\,\, \textit{let } $\hat{\bm{w}}_j^{(i)} = \bm{w}_j(0)$,  $\quad\forall\,j\in\mathcal{N}_i$
  \STATE \textit{Update  } $\bm{w}_i(1) = \bm{w}_i(0) - \beta_0 \, \displaystyle\sum_{j\in\mathcal{N}_i}\,a_{ij} \left( \hat{\bm{w}}_i^{(i)} - \hat{\bm{w}}_j^{(i)} \right) - \alpha_0 \,\mathbf{ g}_i\left( \bm{w}_i(0), \bm{\xi}_i(0) \right)$
  \ENDFOR
  \FOR {Iteration $k \geq 1$}
  \FOR {Agent $i = 1$ to $n$}
  \STATE \textit{Sample  } $\bm{\xi}_i(k)$ \,\,\,\&\,\,\, \textit{compute } ${g}_i\left( \bm{w}_i(k), \bm{\xi}_i(k) \right)$ \label{Step04}
  \STATE \textit{Compute  } $\bm{e}_i(k) = {\bm{w}}_i(k)-\hat{\bm{w}}_i^{(i)}$
  \IF {$\left\|\bm{e}_i(k)\right\|_1 \geq \upsilon_0\,\alpha_k$}
  \STATE \textit{Send  } $\bm{w}_i(k)$ \,\,\,\&\,\,\, \textit{let \,} $\hat{\bm{w}}_i^{(i)} = \bm{w}_i(k)$
  \ENDIF
  \IF {any $\bm{w}_j(k)$ received}
  \STATE \textit{Let }$\hat{\bm{w}}_j^{(i)} = \bm{w}_j(k)$
  \ENDIF
  \STATE \textit{Update  } $\bm{w}_i(k+1) = \bm{w}_i(k) - \beta_k \, \displaystyle\sum_{j\in\mathcal{N}_i}\,a_{ij} \left( \hat{\bm{w}}_i^{(i)} - \hat{\bm{w}}_j^{(i)} \right) - \alpha_k \,\mathbf{ g}_i\left( \bm{w}_i(k), \bm{\xi}_i(k) \right)$
  \ENDFOR
  \ENDFOR
 \end{algorithmic}
 \end{algorithm}
\vspace{2pt}
\noindent Note that here $\alpha_k$ and $\beta_k$ are defined as
\begin{align}
\alpha_k = \frac{a}{(k+1)^{\delta_2}}
\end{align}
and
\begin{align}
\beta_k = \frac{b}{(k+1)^{\delta_1}}.
\end{align}

\section{Additional Numerical Results}

Numerical results from the implementation of the proposed algorithm for distributed supervised learning are presented here.
\begin{figure*}[h!]
  \begin{centering}
      \subfigure[Centralized SGD]{
      \includegraphics[scale=0.35]{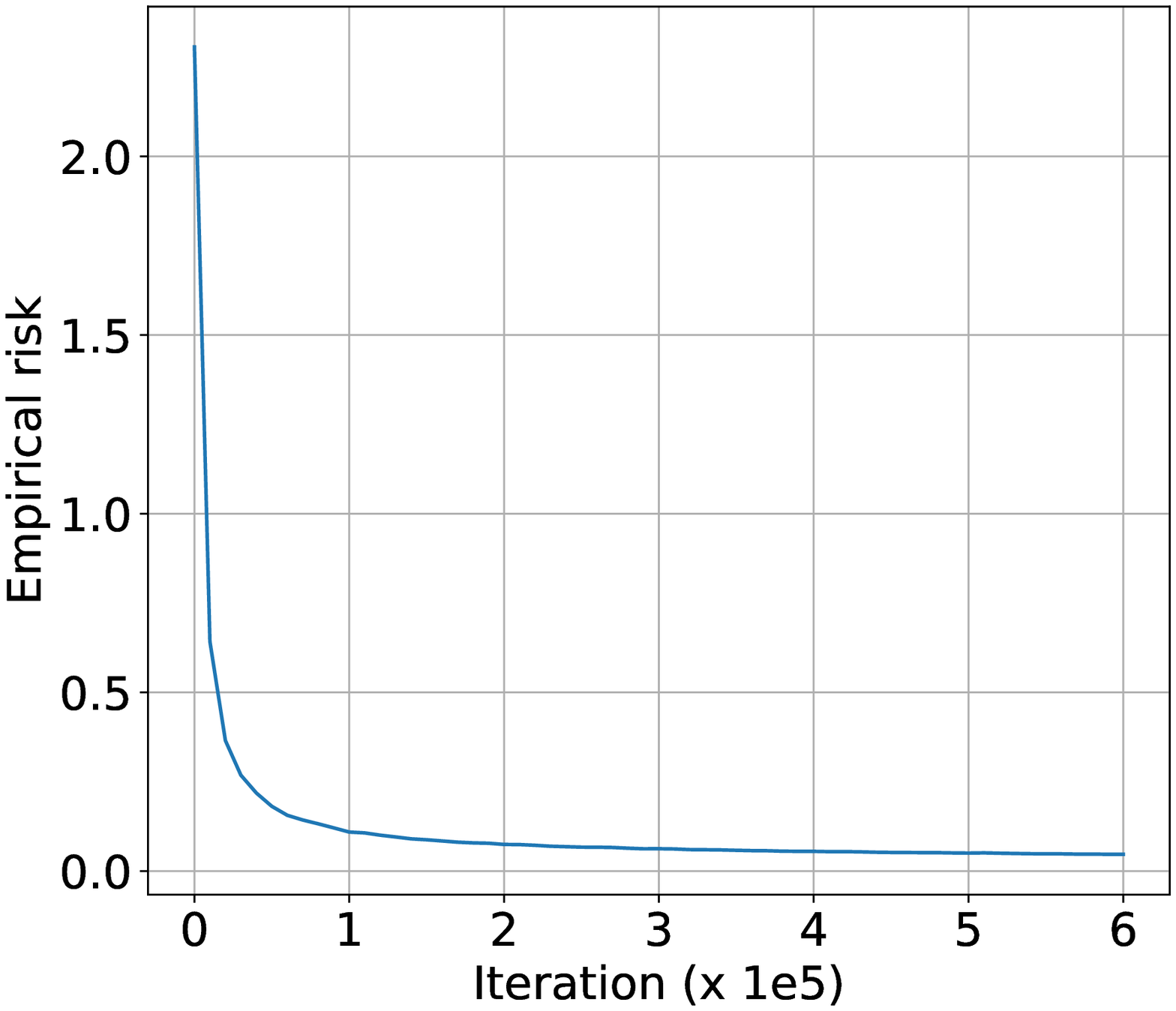}\label{Fig:Cent}}
      \subfigure[Distributed SGD-r]{
      \includegraphics[scale=0.35]{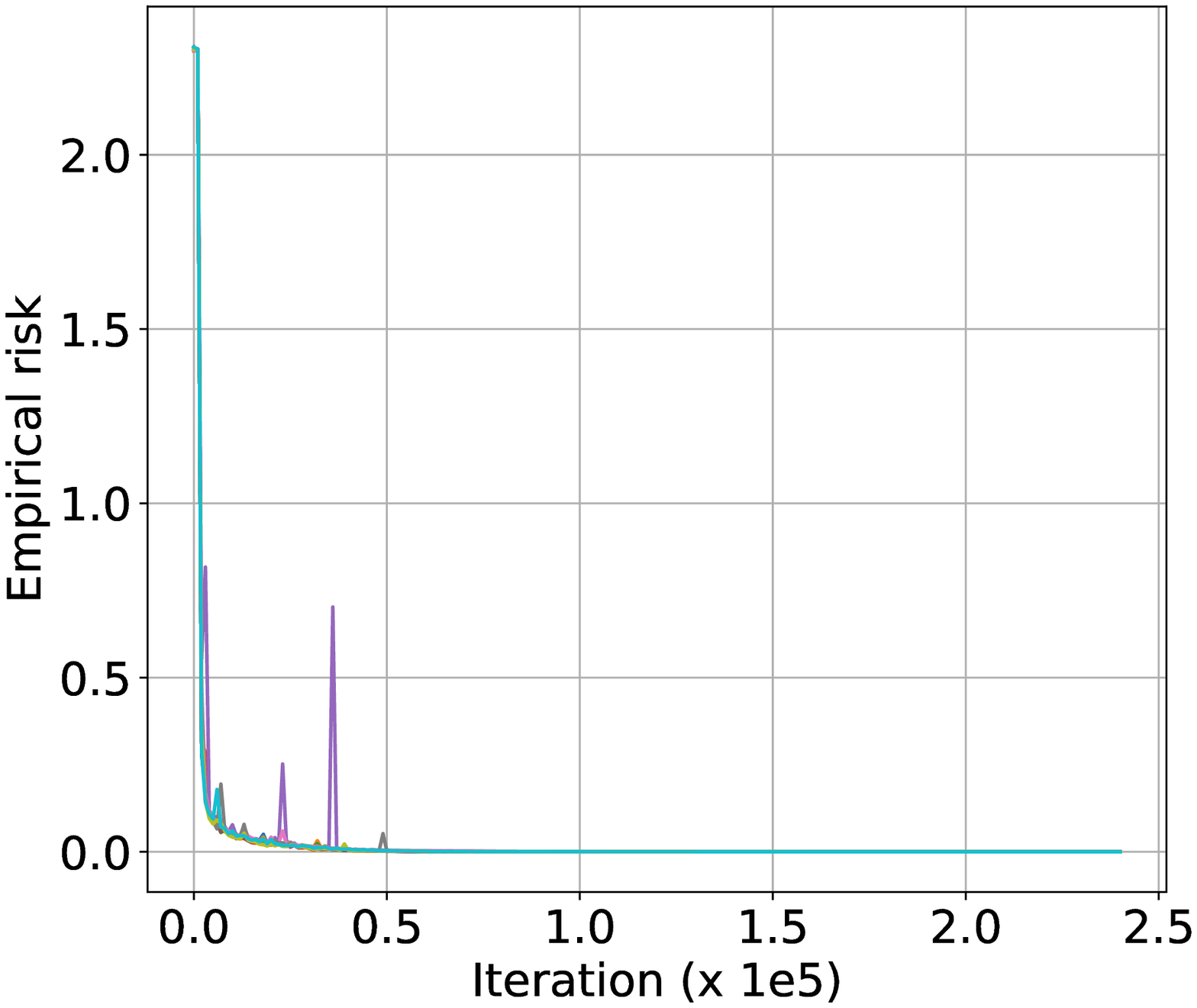}\label{Fig:Distacl}}
      \subfigure[Distributed SGD-s]{
      \includegraphics[scale=0.35]{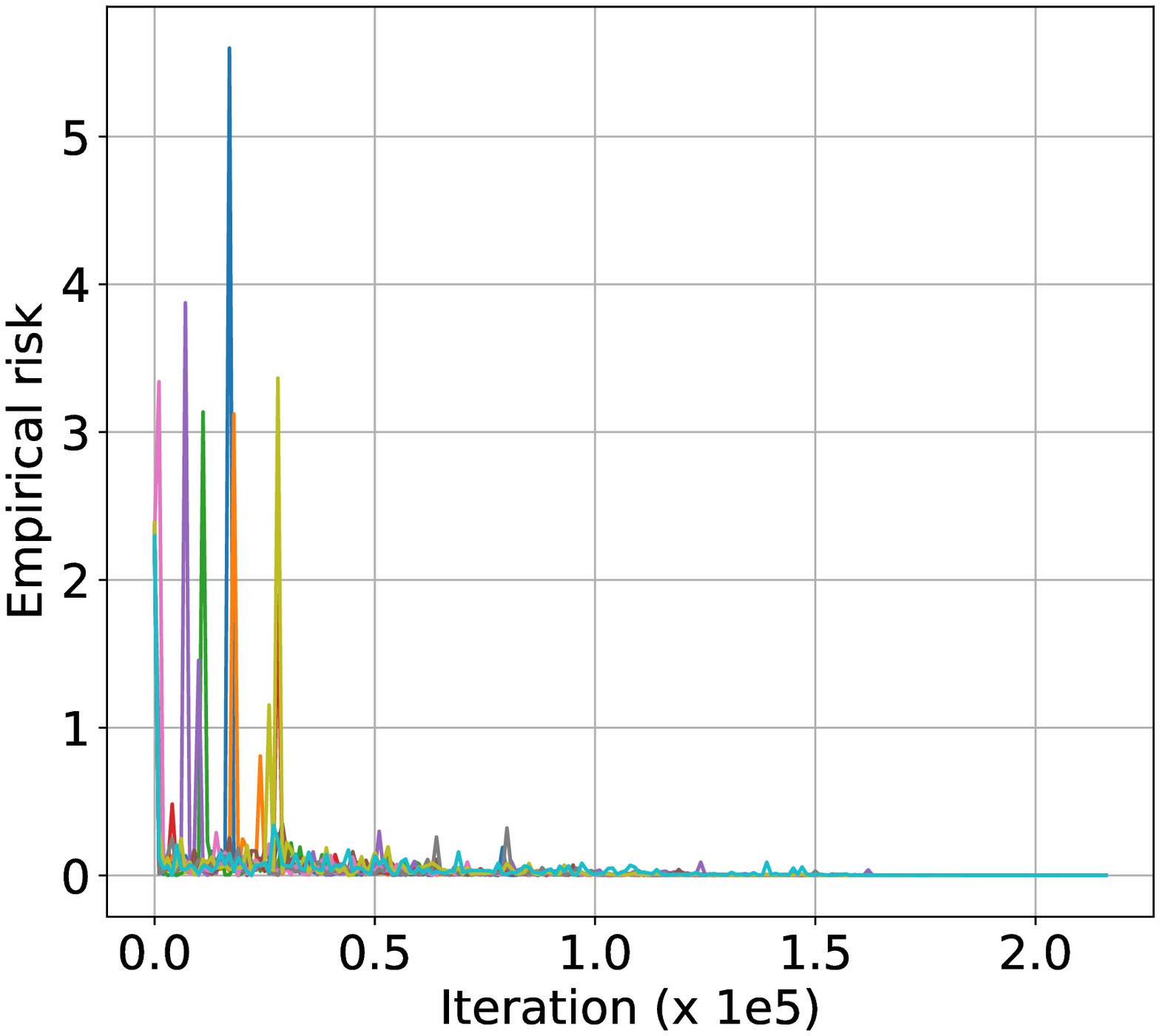}\label{Fig:Distscl}}
      \subfigure[DETSGRAD-r]{
      \includegraphics[scale=0.35]{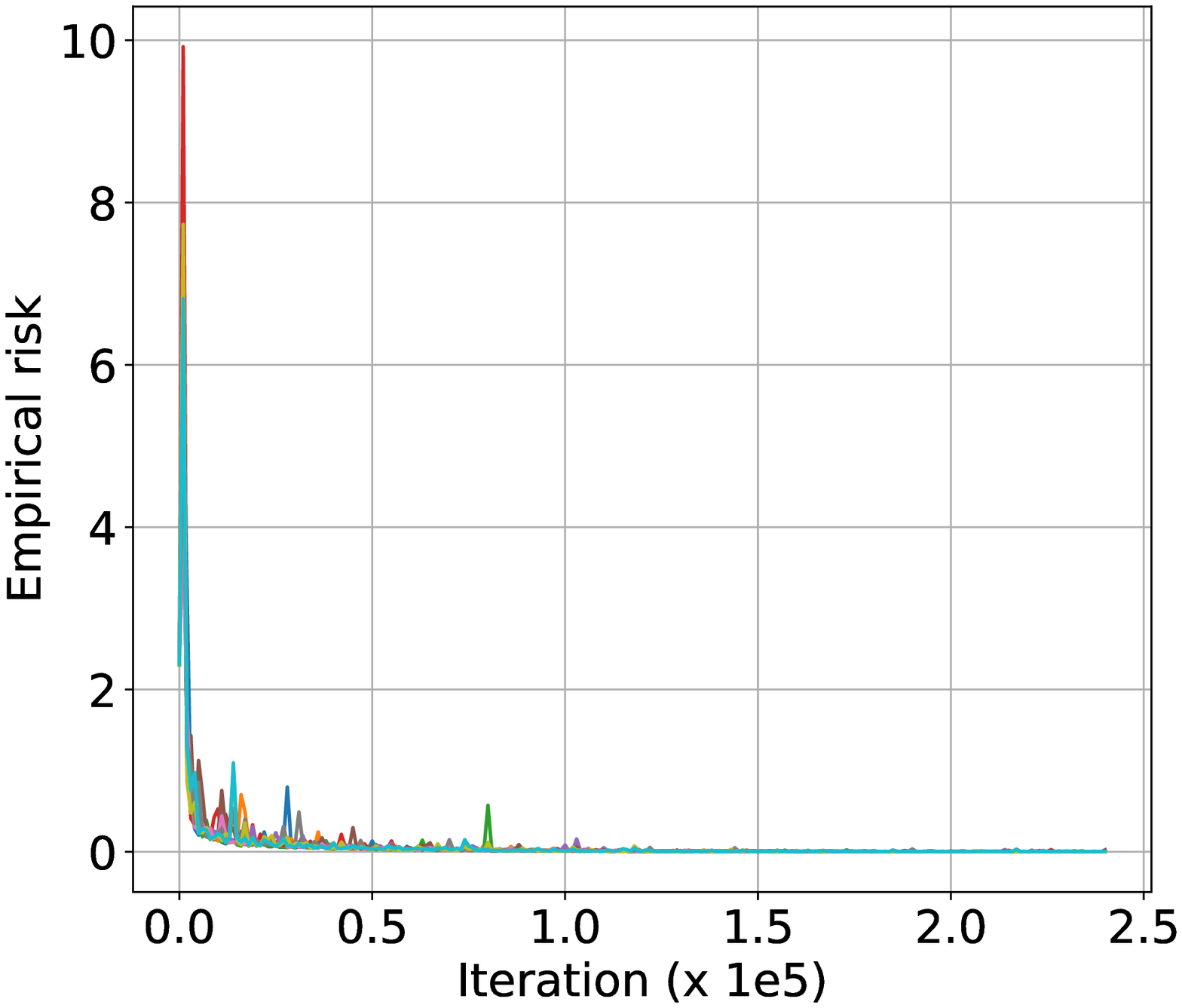}\label{Fig:ADistacl}}
      \subfigure[DETSGRAD-s]{
      \includegraphics[scale=0.35]{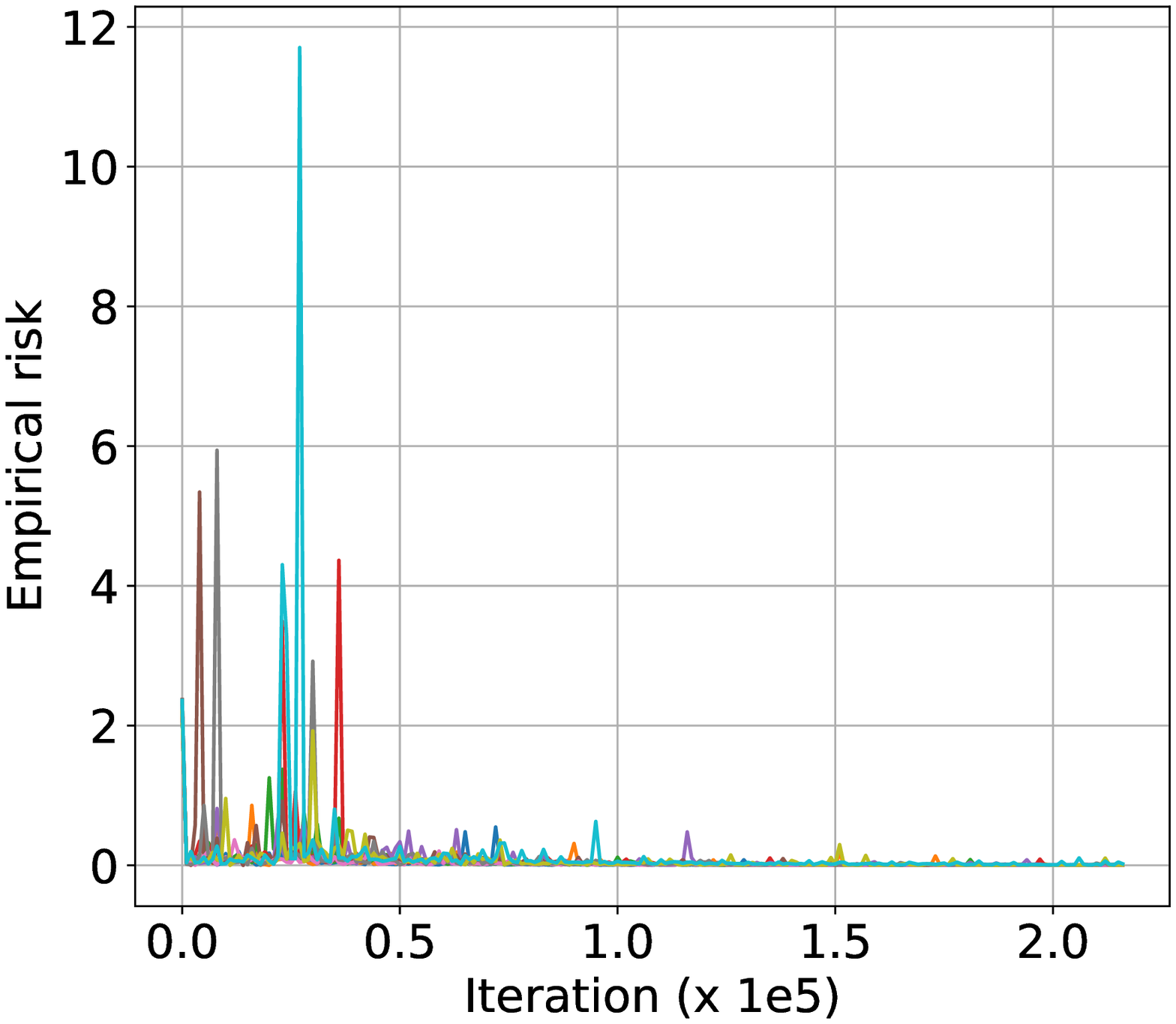}\label{Fig:ADistscl}}
      \caption{Empirical risk for all five experiments.}
      \label{lossplots}
  \end{centering}
\end{figure*}

\begin{figure*}[h!]
  \begin{centering}
      \subfigure[DETSGRAD-r]{
      \includegraphics[scale=0.35]{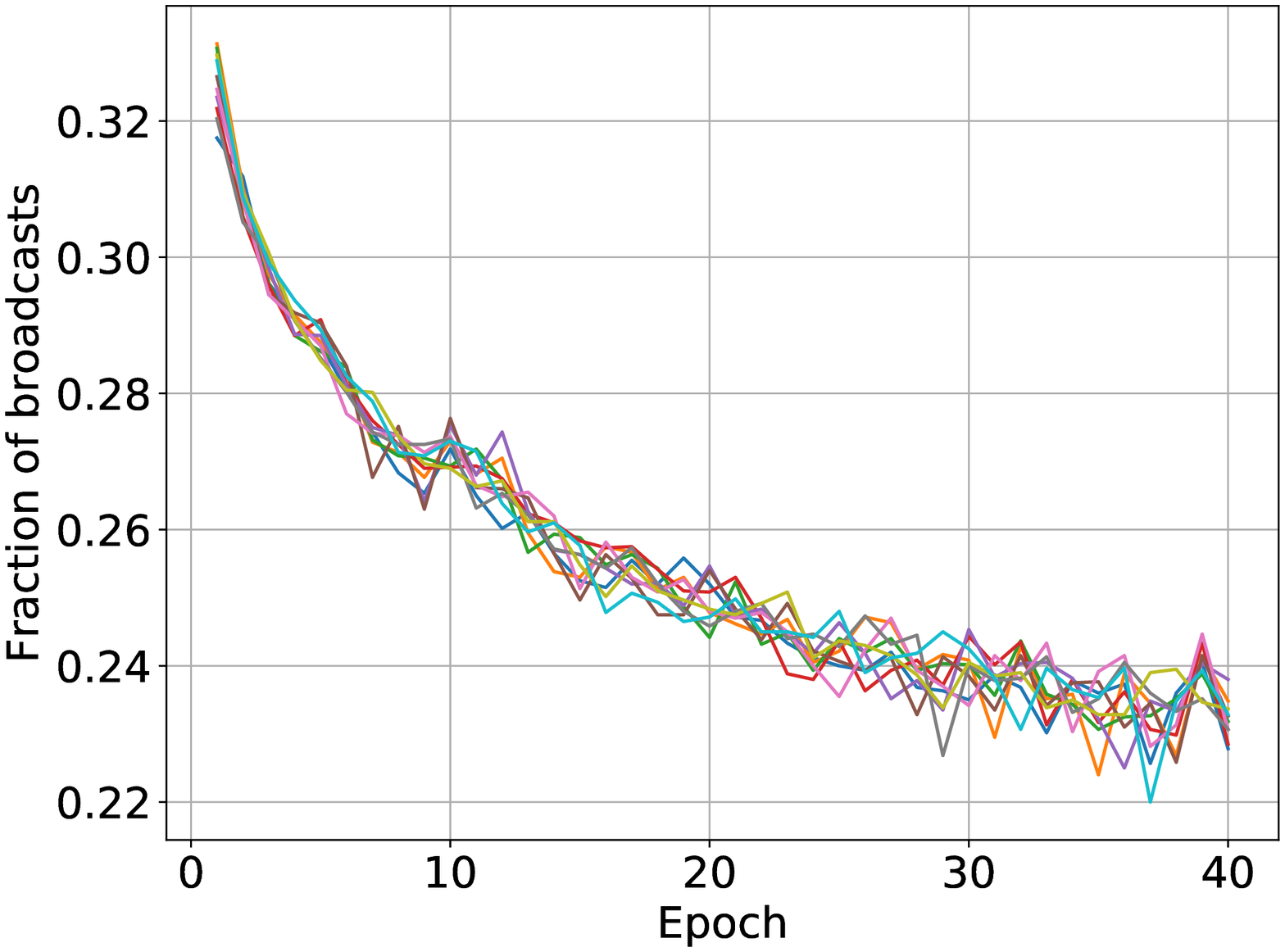}\label{Fig:PBacl}}
      \subfigure[DETSGRAD-s]{
      \includegraphics[scale=0.35]{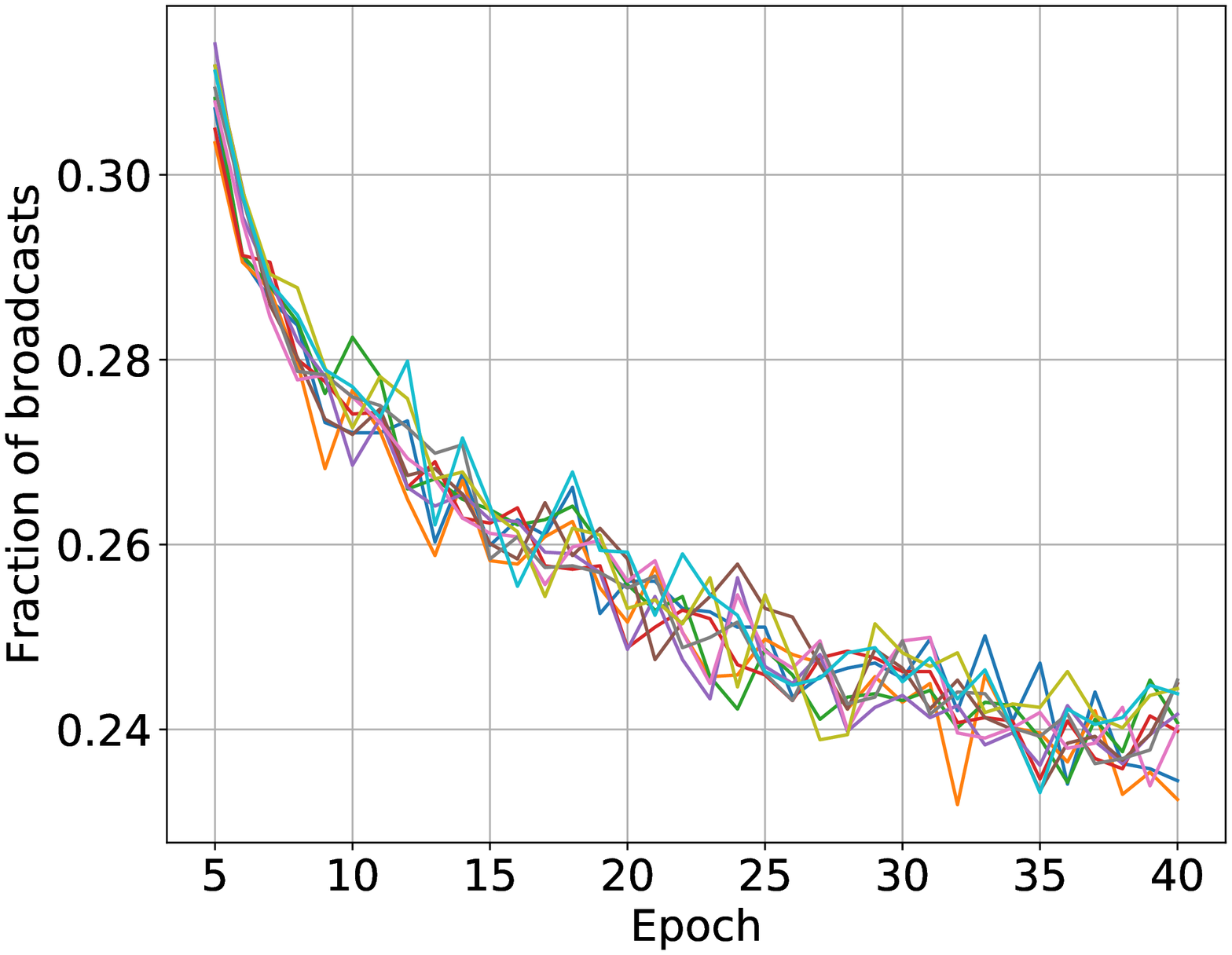}\label{Fig:PBscl}}
      \caption{Fraction of event-triggered broadcast events for the 10 agents compared to continuous broadcasting case.}
      \label{pbplots}
  \end{centering}
\end{figure*}

Detailed proofs of the theoretical results given in the main body of the manuscript are given next, but first we present few useful Lemmas.

\section{Few Useful Lemmas}\label{Appdx:A}

\begin{lemma}\label{Lemma:Kar}
  Let $\{z_k\}$ be a non-negative sequence satisfying
\begin{align}\label{Eqn:lamma2}
z_{k+1} \leq \left( 1 - r_1(k)\right)z_k + r_2(k),
\end{align}
where $\{r_1(k)\}$ and $\{r_2(k)\}$ are sequences with
\begin{equation}
    \frac{a_1}{(k+1)^{\epsilon_1}} \leq r_1(k) \leq 1 \quad \textnormal{and} \quad r_2(k) \leq \frac{a_2}{(k+1)^{\epsilon_2}},
\end{equation}
where $0 < a_1$, $0 < a_2$, $0 \leq \epsilon_1 < 1$, and $\epsilon_1 < \epsilon_2$. Then $ (k+1)^{\epsilon_0}z_k \rightarrow 0$ as $k\rightarrow \infty$ for all $0 \leq \epsilon_0 < \epsilon_2 - \epsilon_1$.
\end{lemma}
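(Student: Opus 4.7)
The plan is to establish the decay rate via a direct induction: construct a supersolution of the form $C/(k+1)^{\epsilon_0^*}$ with $\epsilon_0^*$ chosen strictly between $\epsilon_0$ and $\epsilon_2 - \epsilon_1$, and then deduce $(k+1)^{\epsilon_0} z_k \to 0$ from the strictly faster decay of the supersolution. This two-stage approach (bound first, then refine the rate) is standard for Robbins--Monro type recursions, and it avoids any need to manipulate infinite products of the form $\prod_j (1-r_1(j))$.

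First I would establish a uniform bound on $\{z_k\}$. Since $r_1(k) \leq 1$ and $r_2(k) \leq a_2/(k+1)^{\epsilon_2}$, one checks inductively that $z_k \leq B \triangleq \max\{z_0,\, a_2/a_1\}$ for every $k$: assuming $z_k \leq B$, the desired bound $z_{k+1} \leq (1-r_1(k))B + r_2(k) \leq B$ reduces, after using $r_1(k) \geq a_1/(k+1)^{\epsilon_1}$, to the inequality $a_2/(k+1)^{\epsilon_2 - \epsilon_1} \leq a_1 B$, which holds for every $k \geq 0$ because $\epsilon_2 > \epsilon_1$ and $a_1 B \geq a_2$ by the choice of $B$.

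Next, fix $\epsilon_0^* \in (\epsilon_0,\, \epsilon_2 - \epsilon_1)$ and a constant $C > 0$ to be chosen. I aim to show $z_k \leq C/(k+1)^{\epsilon_0^*}$ for every $k \geq k^*$, with $k^*$ sufficiently large. Substituting the induction hypothesis into the recursion and using the elementary estimate $(k+2)^{\epsilon_0^*} \leq (k+1)^{\epsilon_0^*}(1 + c_{\epsilon_0^*}/(k+1))$ for some $c_{\epsilon_0^*}$ depending only on $\epsilon_0^*$, the target inequality $z_{k+1} \leq C/(k+2)^{\epsilon_0^*}$ reduces, after multiplying through by $(k+1)^{\epsilon_0^* + \epsilon_1}$, to
$$a_1 C \;\geq\; \frac{a_2}{(k+1)^{\epsilon_2 - \epsilon_0^* - \epsilon_1}} \,+\, \frac{c_{\epsilon_0^*}\, C}{(k+1)^{1 - \epsilon_1}}.$$
Since $\epsilon_2 - \epsilon_0^* - \epsilon_1 > 0$ by the choice of $\epsilon_0^*$ and $1 - \epsilon_1 > 0$ by hypothesis, both right-hand terms vanish as $k \to \infty$; therefore, for any fixed $C > 0$ the inequality holds for all $k \geq k^*(C)$. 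Choosing $C$ large enough that $C \geq B(k^*+1)^{\epsilon_0^*}$ seeds the induction via $z_{k^*} \leq B \leq C/(k^*+1)^{\epsilon_0^*}$, and the inductive step then propagates the bound to every $k \geq k^*$, yielding $\sup_{k} (k+1)^{\epsilon_0^*} z_k < \infty$.

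To conclude, write $(k+1)^{\epsilon_0} z_k = (k+1)^{\epsilon_0 - \epsilon_0^*} \cdot (k+1)^{\epsilon_0^*} z_k$; the first factor tends to $0$ because $\epsilon_0 - \epsilon_0^* < 0$, while the second factor is bounded by the previous step, so the product tends to $0$. The main technical obstacle is balancing the two decay regimes in the inductive step: the discrete-time drift introduced by the shift from $(k+1)^{\epsilon_0^*}$ to $(k+2)^{\epsilon_0^*}$ contributes an $O(1/(k+1)^{1-\epsilon_1})$ penalty that must be absorbed by the contraction rate $a_1/(k+1)^{\epsilon_1}$, which is precisely why the hypothesis $\epsilon_1 < 1$ is indispensable and why one cannot take $\epsilon_0^* = \epsilon_2 - \epsilon_1$ directly.
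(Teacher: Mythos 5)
Your proof is correct, but it is a genuinely different route from the paper's: the paper does not prove this lemma at all, it simply invokes Lemma 4.1 of Kar and Moura (2013), whose standard proof proceeds by unrolling the recursion and estimating the product $\prod_j (1 - r_1(j))$ against $\exp(-\sum_j r_1(j))$. Your two-stage argument --- first a uniform bound $z_k \leq \max\{z_0, a_2/a_1\}$, then a supersolution induction at a strictly sub-critical rate $\epsilon_0^* \in (\epsilon_0,\, \epsilon_2 - \epsilon_1)$, followed by the squeeze $(k+1)^{\epsilon_0} z_k = (k+1)^{\epsilon_0 - \epsilon_0^*}\cdot (k+1)^{\epsilon_0^*} z_k$ --- is self-contained and elementary, and the reduction of the inductive step to $a_1 C \geq a_2 (k+1)^{-(\epsilon_2 - \epsilon_0^* - \epsilon_1)} + c_{\epsilon_0^*} C (k+1)^{-(1-\epsilon_1)}$ is verified correctly; it isolates exactly where $\epsilon_1 < 1$ and $\epsilon_0^* < \epsilon_2 - \epsilon_1$ enter. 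What your approach buys is independence from the external reference and a transparent view of why only the open range $\epsilon_0 < \epsilon_2 - \epsilon_1$ is attainable; what it gives up is that the product-based argument yields the convergence $(k+1)^{\epsilon_0} z_k \to 0$ directly at each admissible $\epsilon_0$ without introducing the auxiliary exponent. One presentational point you should tighten: you choose $k^*(C)$ depending on $C$ and then choose $C$ depending on $k^*$; this apparent circularity resolves because, after dividing your displayed inequality by $C$, its right-hand side is decreasing in $C$, so $k^*(C)$ is non-increasing in $C$ and you may fix $k^*$ for some reference value of $C$ and then enlarge $C$ freely to seed the induction. Stating this explicitly would close the only loose end.
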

\begin{proof}
This Lemma follows directly from Lemma 4.1 of \citep{Kar2013SIAM}.
\end{proof}

\begin{lemma}\label{Lemma:Robbins}
  Let $\{v_k\}$ be a non-negative sequence for which the following relation hold for all $k \geq 0$:
  \begin{equation}\label{Eq:Robbins}
      v_{k+1} \leq (1 + a_k) v_k - u_k + w_k,
  \end{equation}
  where $a_k\geq 0$, $u_k\geq 0$ and $w_k\geq 0$ with $\sum\limits_{k=0}^{\infty}a_k < \infty $ and $\sum\limits_{k=0}^{\infty}w_k < \infty $. Then the sequence $\{v_k\}$ will converge to $v\geq0$ and we further have $\sum\limits_{k=0}^{\infty}u_k < \infty $.
\end{lemma}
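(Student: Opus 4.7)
The plan is to combine the summability $\sum_{k=0}^\infty \alpha_k h_k < \infty$ of Theorem~\ref{Theorem:SummableGrad} (where $h_k := \mathbb{E}\|\overline{\nabla F}(\mathbf{w}_k)\|_2^2$) with a Robbins--Siegmund style convergence argument that, crucially, exploits Assumption~\ref{Assump:LipzHess}---the ingredient that has not yet been used. Since $\sum_k \alpha_k = \infty$ by Assumption~\ref{Assump:AlphaBeta}, the summability of $\alpha_k h_k$ already yields $\liminf_{k\to\infty} h_k = 0$, so the entire task reduces to showing that the sequence $\{h_k\}$ is actually convergent; the limit is then forced to be $0$.

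To establish convergence of $\{h_k\}$, I would derive a recursion matching the hypothesis of Lemma~\ref{Lemma:Robbins} (with $u_k = 0$). Starting from the identity $\|\overline{\nabla F}(\mathbf{w}_{k+1})\|_2^2 = \|\overline{\nabla F}(\mathbf{w}_k)\|_2^2 + 2\,\overline{\nabla F}(\mathbf{w}_k)^\top \delta_k + \|\delta_k\|_2^2$ with $\delta_k := \overline{\nabla F}(\mathbf{w}_{k+1}) - \overline{\nabla F}(\mathbf{w}_k)$, I would invoke the second-order Taylor expansion guaranteed by Assumption~\ref{Assump:LipzHess} (using \eqref{Eq:LipzHess1}):
\begin{equation*}
\delta_k = \tfrac{1}{n}(\mathbf{1}_n\mathbf{1}_n^\top \otimes I_{d_w})\,\nabla^2 F(\mathbf{w}_k)\,\Delta_k + r_k, \qquad \|r_k\|_2 \leq \tfrac{L_H}{2}\|\Delta_k\|_2^2,
\end{equation*}
with $\Delta_k = \mathbf{w}_{k+1} - \mathbf{w}_k$. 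Taking conditional expectation given $\mathcal{F}_k$, substituting $\mathbb{E}[\Delta_k \mid \mathcal{F}_k] = -\beta_k(\mathcal{L}\otimes I_{d_w})\hat{\mathbf{w}}_k - \alpha_k\nabla F(\mathbf{w}_k)$ from Assumption~\ref{Assump:Grad1}, and bounding $\mathbb{E}[\|\Delta_k\|_2^2 \mid \mathcal{F}_k]$ through Assumption~\ref{Assump:Grad2}, the event-triggering condition $\|\bm{e}_i(k)\|_1 \leq \upsilon_0 \alpha_k$, and the consensus rate $\mathbb{E}\|\tilde{\mathbf{w}}_k\|_2^2 = O(1/(k{+}1)^{\delta_2})$ of Theorem~\ref{Theorem:Consensus}, the usual Cauchy--Schwarz and AM--GM manipulations should deliver (after total expectation) a bound of the form $h_{k+1} \leq (1+a_k)h_k + w_k$ with $\sum_k a_k < \infty$ and $\sum_k w_k < \infty$. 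Lemma~\ref{Lemma:Robbins} then gives $h_k \to h_\infty$ for some $h_\infty \geq 0$.

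To conclude, if $h_\infty > 0$ then for $K$ large enough $h_k \geq h_\infty/2$ for all $k \geq K$, so $\sum_k \alpha_k h_k \geq (h_\infty/2)\sum_{k \geq K}\alpha_k = \infty$, contradicting Theorem~\ref{Theorem:SummableGrad}; hence $h_\infty = 0$, which is the first equality. The second equality is immediate from the algebraic identity $\|\overline{\nabla F}(\mathbf{w}_k)\|_2^2 = \tfrac{1}{n}\|\sum_{i=1}^n \nabla f_i(\bm{w}_i(k))\|_2^2$ noted just above the theorem, so the two expected norms differ only by the factor $n$. The main obstacle I anticipate is the careful absorption of the drift cross-term $-2\beta_k\,\overline{\nabla F}(\mathbf{w}_k)^\top \tfrac{1}{n}(\mathbf{1}_n\mathbf{1}_n^\top\otimes I_{d_w})\nabla^2 F(\mathbf{w}_k)(\mathcal{L}\otimes I_{d_w})\hat{\mathbf{w}}_k$ and the quadratic remainder in $r_k$ into the summable sequences $a_k h_k + w_k$: the sharpness of the admissible exponents $3\delta_1 < \delta_2$, $\delta_1/2 + \delta_2 > 1$, and $\delta_2 > 1/2$ in Assumption~\ref{Assump:AlphaBeta} is exactly what balances the rates of $\beta_k$, $\alpha_k$, and the $O(1/(k{+}1)^{\delta_2})$ consensus decay, making each residual series converge.
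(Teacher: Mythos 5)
Your proposal does not prove the statement it is paired with. The statement here is Lemma~\ref{Lemma:Robbins} itself --- the Robbins--Siegmund-type result that a non-negative sequence satisfying $v_{k+1} \leq (1+a_k)v_k - u_k + w_k$ with $a_k, u_k, w_k \geq 0$, $\sum_k a_k < \infty$ and $\sum_k w_k < \infty$ must converge and must satisfy $\sum_k u_k < \infty$. What you have written is instead a proof sketch of Theorem~\ref{Theorem:OptCond} (the vanishing of $\mathbb{E}[\|\overline{\nabla F}(\mathbf{w}_k)\|_2^2]$), and it explicitly \emph{invokes} Lemma~\ref{Lemma:Robbins} as a black box (``Lemma~\ref{Lemma:Robbins} then gives $h_k \to h_\infty$''). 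Offered as a proof of Lemma~\ref{Lemma:Robbins}, this is circular; nothing in your argument engages with the lemma's actual content --- there is no argument that $\{v_k\}$ converges and no derivation of $\sum_k u_k < \infty$ from the recursion.

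For the record, the paper disposes of this lemma by citation to Robbins and Siegmund (1971). A self-contained proof is short: set $\pi_k = \prod_{j=0}^{k-1}(1+a_j)$, which increases to a finite limit $\pi_\infty \geq 1$ because $\sum_j a_j < \infty$, and put $\hat{v}_k = v_k/\pi_k$. Dividing the recursion by $\pi_{k+1}$ gives $\hat{v}_{k+1} \leq \hat{v}_k - u_k/\pi_{k+1} + w_k/\pi_{k+1} \leq \hat{v}_k - u_k/\pi_\infty + w_k$, so $s_k = \hat{v}_k + \pi_\infty^{-1}\sum_{j<k} u_j - \sum_{j<k} w_j$ is non-increasing and bounded below by $-\sum_j w_j$; hence $s_k$ converges, and since $\sum_{j<k} w_j$ converges while $\hat v_k \ge 0$ and $\sum_{j<k}u_j$ is non-decreasing, both $\hat{v}_k$ and $\sum_{j<k} u_j$ converge. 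Then $v_k = \pi_k\hat{v}_k$ converges to some $v \geq 0$ and $\sum_k u_k < \infty$. As an aside, had your text been targeted at Theorem~\ref{Theorem:OptCond} it would be close in spirit to the paper's argument there (the paper likewise uses Assumption~\ref{Assump:LipzHess} to get a Lipschitz gradient for $\|\overline{\nabla F}(\cdot)\|_2^2$, feeds the resulting recursion into Lemma~\ref{Lemma:Robbins}, and forces the limit to zero via Theorem~\ref{Theorem:SummableGrad}), but that does not repair the mismatch here.
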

\begin{proof}
See \citep{ROBBINS1971233}.
\end{proof}
\begin{lemma}\label{Lemma10}
  Let $\gamma_k \triangleq \displaystyle\frac{a/b}{(k+1)^{\epsilon}}$ with $0 < \epsilon \leq 1$. Then it holds
  \begin{equation}
      \gamma_{k+1}^{-1} - \gamma_{k}^{-1} \leq \frac{2b\epsilon}{a} (k+1)^{\epsilon-1}.
  \end{equation}
  \vspace{1pt}
\end{lemma}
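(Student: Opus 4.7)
The plan is to reduce the claim to an elementary inequality about powers of consecutive integers. Writing out $\gamma_k^{-1} = (b/a)(k+1)^{\epsilon}$, the claim is equivalent to
\begin{equation*}
(k+2)^{\epsilon} - (k+1)^{\epsilon} \leq 2\epsilon (k+1)^{\epsilon-1},
\end{equation*}
so the task is entirely about the scalar function $t \mapsto t^{\epsilon}$ on $[k+1,\,k+2]$. The factor of $2$ on the right-hand side is generous, which suggests that any reasonable tangent-line or mean-value estimate should close the gap with room to spare.

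My preferred route is the concavity of $t \mapsto t^{\epsilon}$ on $(0,\infty)$ for $\epsilon \in (0,1]$. Since a concave function lies below its tangent at any point, taking the tangent at $t = 1$ gives $t^{\epsilon} \leq 1 + \epsilon(t-1)$ for all $t > 0$. Setting $t = (k+2)/(k+1) = 1 + 1/(k+1)$ and multiplying through by $(k+1)^{\epsilon}$ yields $(k+2)^{\epsilon} - (k+1)^{\epsilon} \leq \epsilon (k+1)^{\epsilon-1}$, which is in fact stronger than what the lemma requires by a factor of two. Substituting back, $\gamma_{k+1}^{-1} - \gamma_k^{-1} \leq (b\epsilon/a)(k+1)^{\epsilon-1}$, and the stated bound follows a fortiori.

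An equally quick alternative is the mean value theorem: there exists $c \in (k+1,\,k+2)$ with $(k+2)^{\epsilon} - (k+1)^{\epsilon} = \epsilon\, c^{\epsilon-1}$. Because $\epsilon - 1 \leq 0$, the map $c \mapsto c^{\epsilon-1}$ is nonincreasing, so $c^{\epsilon-1} \leq (k+1)^{\epsilon-1}$, giving the same sharper bound. Either approach is essentially one line; I would use the concavity version because it avoids a case distinction between $\epsilon=1$ (where the derivative is constant) and $\epsilon < 1$.

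There is no real obstacle here: the only thing to watch is that the concavity inequality $t^{\epsilon} \leq 1 + \epsilon(t-1)$ genuinely holds for all $t > 0$ (not merely near $1$), which is immediate from concavity together with $1^{\epsilon} = 1$ and $\frac{d}{dt} t^{\epsilon}\big|_{t=1} = \epsilon$. The looseness in the stated $2\epsilon$ constant means that even a crude bound such as replacing $(k+2)^{\epsilon-1}$ by $(k+1)^{\epsilon-1}$ after using the MVT would suffice, so the lemma is robust to whichever convexity-style tool one chooses.
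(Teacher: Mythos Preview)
Your proposal is correct and follows essentially the same route as the paper: both factor $\gamma_{k+1}^{-1}-\gamma_k^{-1}$ as $\frac{b}{a}(k+1)^{\epsilon}\bigl((1+\tfrac{1}{k+1})^{\epsilon}-1\bigr)$ and then bound $(1+x)^{\epsilon}-1$ linearly in $x$. The paper invokes the inequality $(1+x)^{\epsilon}-1\leq 2\epsilon x$ on $[0,1]$, whereas your concavity/tangent-line argument (equivalently Bernoulli for exponents in $(0,1]$) gives the sharper $(1+x)^{\epsilon}-1\leq \epsilon x$ and hence the tighter constant $b\epsilon/a$; this is a minor improvement rather than a different method.
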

\begin{proof}
First note that $(1+x)^{\epsilon}$ is a monotonically increasing function for all $x\in[0,\,1]$. Thus the following inequality holds for all $ \epsilon\in(0,\,1]$
\begin{equation}\label{Eq:Lemma10}
    (1+x)^{\epsilon} - 1 \leq 2\epsilon x, \quad \forall x\in[0,\,1].
\end{equation}
Note that
\begin{align}
    \gamma_{k+1}^{-1} - \gamma_{k}^{-1} &= \frac{b}{a} \left( (k+2)^{\epsilon} - (k+1)^{\epsilon} \right) = \frac{b}{a} (k+1)^{\epsilon} \left( \left( 1 + \frac{1}{k+1} \right)^{\epsilon} - 1 \right)
\end{align}
From \eqref{Eq:Lemma10} we have
\begin{align}
    \left( 1 + \frac{1}{k+1} \right)^{\epsilon} - 1 \leq 2\epsilon \frac{1}{k+1}
\end{align}
Therefore
\begin{align}
    \gamma_{k+1}^{-1} - \gamma_{k}^{-1} \leq \frac{2b\epsilon}{a} (k+1)^{\epsilon-1}.
\end{align}
\end{proof}

\noindent We have the following result from Assumptions~\ref{Assump:Lipz} and \ref{Assump:Grad2}.
\begin{proposition}\label{Assump:BoundedGrad}
Given Assumptions \ref{Assump:Lipz} and \ref{Assump:Grad2}, there exists a positive constant $\mu_g < \infty$ such that
\begin{align}\label{Eqn:BoundedGrad}
\sup\limits_{k\geq0} \, \mathbb{E} \left[ \| \mathbf{g}(\mathbf{w}_k,\bm{\xi}_k) \|_2^2 \right] \leq \mu_g.
\end{align}
\end{proposition}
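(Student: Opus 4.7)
The plan is to combine the two hypotheses by first turning Assumption~\ref{Assump:Lipz} into a uniform bound on $\|\nabla F(\mathbf{w})\|_2$, and then plugging that uniform bound into the conditional second-moment estimate of Assumption~\ref{Assump:Grad2} before taking total expectation. The Lipschitz continuity of $f_i$ with constant $L_i^0$ (note: Lipschitzness of the function itself, not of its gradient) is what makes the argument almost immediate, since globally Lipschitz functions have gradients uniformly bounded by the Lipschitz constant.

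More concretely, I would first observe that for any $\bm{w}\in\mathbb{R}^{d_w}$ and any unit vector $\bm{u}$, the one-sided directional derivative $\bm{u}^\top \nabla f_i(\bm{w})$ is bounded in magnitude by $L_i^0$ (this is the standard consequence of $f_i$ being $L_i^0$-Lipschitz). Taking $\bm{u} = \nabla f_i(\bm{w})/\|\nabla f_i(\bm{w})\|_2$ gives $\|\nabla f_i(\bm{w})\|_2 \leq L_i^0$ for every $\bm{w}$ and every $i$. Setting $L^0 = \max_i L_i^0$, I then stack these componentwise bounds to obtain the deterministic estimate
\begin{equation*}
  \|\nabla F(\mathbf{w})\|_2^2 \;=\; \sum_{i=1}^n \|\nabla f_i(\bm{w}_i)\|_2^2 \;\leq\; n (L^0)^2, \qquad \forall\,\mathbf{w}\in\mathbb{R}^{nd_w}.
\end{equation*}

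With the uniform gradient bound in hand, I would invoke Assumption~\ref{Assump:Grad2} conditionally on $\mathcal{F}_k$ to write
\begin{equation*}
  \mathbb{E}_{\xi}\!\left[\|\mathbf{g}(\mathbf{w}_k,\bm{\xi}_k)\|_2^2\right] \;\leq\; \bar{\mu}_{v_1} + \bar{\mu}_{v_2}\,\|\nabla F(\mathbf{w}_k)\|_2^2 \;\leq\; \bar{\mu}_{v_1} + \bar{\mu}_{v_2}\, n (L^0)^2, \qquad \text{a.s.,}
\end{equation*}
and then take total expectation (tower property) to remove the conditioning. Since the resulting right-hand side is a deterministic constant independent of $k$, I can set $\mu_g = \bar{\mu}_{v_1} + \bar{\mu}_{v_2}\, n (L^0)^2 < \infty$ and take the supremum over $k\geq 0$ to conclude.

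The only subtlety to be careful about is the justification that a globally Lipschitz $f_i$ has a uniformly bounded gradient almost everywhere (and hence everywhere if $\nabla f_i$ is continuous, which it is by the second half of Assumption~\ref{Assump:Lipz}); this is standard but worth stating explicitly so that the step from $L_i^0$-Lipschitzness of $f_i$ to the componentwise bound $\|\nabla f_i\|_2 \leq L_i^0$ is not glossed over. Beyond that, no inequality chain is delicate and no step depends on the dynamics \eqref{Eq:DSG}, so the proof is essentially a two-line application of the tower property after the uniform gradient bound is established.
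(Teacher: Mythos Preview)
Your proposal is correct and follows essentially the same route as the paper: both arguments use the Lipschitz continuity of the $f_i$ (not of their gradients) to obtain a uniform bound on $\|\nabla F(\mathbf{w})\|_2$, and then take total expectation in Assumption~\ref{Assump:Grad2}. The only cosmetic difference is that the paper cites Lemma~3.3 of Khalil for the ``Lipschitz $\Rightarrow$ bounded gradient'' step, whereas you prove it directly via the directional-derivative argument and thereby obtain the explicit constant $\mu_g = \bar{\mu}_{v_1} + \bar{\mu}_{v_2}\, n (L^0)^2$.
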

\begin{proof}
Lipschitz continuity of $f_i(\,\cdot\,)$ implies the function $F(\cdot)$ is Lipschitz continuous. From Assumption \ref{Assump:Lipz}, $\nabla F(\cdot)$ is Lipschitz continuous. It follows from Lemma 3.3 in~\cite{khalil2002nonlinear} that $\nabla F(\mathbf{w})$ is bounded $\forall\, \mathbf{w}\in\mathbb{R}^{n d_w}$. Now \eqref{Eqn:BoundedGrad} follows from taking the expectation of \eqref{Eq:2ndGrad}.
\end{proof}
Note that the above result \eqref{Eqn:BoundedGrad} is usually just assumed in literature, e.g.~\citep{Nedic2009TAC,Bottou2018SIAM,Tatarenko2017,Zeng2018TSP}.


\section{Proof of Theorem \ref{Theorem:Consensus}}\label{Appdx:B}
From \eqref{Eqn:wTilde} we have
\begin{align}
    \| \tilde{\mathbf{w}}_{k+1} \|_2 \leq \| \left( \left(I_{n}-\beta_k\mathcal{L}\right) \otimes I_{d_w}\right)  \tilde{\mathbf{w}}_k \|_2 &+ \alpha_k \| \left(M \otimes I_{d_w}\right) \|_2 \| \mathbf{g}(\mathbf{w}_k,\bm{\xi}_k) \|_2 + \beta_k \| \left(\mathcal{L} \otimes I_{d_w}\right) \|_2 \| \mathbf{e}(k) \|_2.
\end{align}
From the triggering condition \eqref{TriggCond} we have $\left\|\mathbf{e}(k)\right\|_1 < n\upsilon_0\alpha_k$. Thus we have
\begin{align}
    \| \tilde{\mathbf{w}}_{k+1} \|_2 \leq \| \left( \left(I_{n}-\beta_k\mathcal{L}\right) \otimes I_{d_w}\right)  \tilde{\mathbf{w}}_k \|_2 &+ \alpha_k \| \left(M \otimes I_{d_w}\right) \|_2 \| \mathbf{g}(\mathbf{w}_k,\bm{\xi}_k) \|_2 + \alpha_k n\upsilon_0 \| \left(\mathcal{L} \otimes I_{d_w}\right) \|_2 .
\end{align}
Since $\mathbf{1}_{nd_w}^\top \tilde{\mathbf{w}}_k = 0$, it follows from Assumption \ref{Assump:Graph} and Lemma 4.4 of \citep{Kar2013SIAM} that
\begin{align}
    \| \left( \left(I_{n}-\beta_k\mathcal{L}\right) \otimes I_{d_w}\right)  \tilde{\mathbf{w}}_k \|_2 \leq (1 - \beta_k\lambda_2(\mathcal{L})) \| \tilde{\mathbf{w}}_k \|_2,
\end{align}
where $\lambda_2(\cdot)$ denotes the second smallest eigenvalue. Thus we have
\begin{align}
    \| \tilde{\mathbf{w}}_{k+1} \|_2
    &\leq  (1 - \beta_k\lambda_2(\mathcal{L})) \| \tilde{\mathbf{w}}_k \|_2 + \alpha_k  \left( \| \mathbf{g}(\mathbf{w}_k,\bm{\xi}_k) \|_2 + n\upsilon_0 \sigma_{\max}(\mathcal{L}) \right),
\end{align}
where $\sigma_{\max}(\cdot)$ denotes the largest singular value. Now we use the following inequality
\begin{equation}\label{InEqTrick}
    (x+y)^2 \leq (1+\theta)x^2 + \left( 1+ \frac{1}{\theta}\right)y^2,
\end{equation}
for all $x,y,\in\mathbb{R}$ and $\theta > 0$. Selecting $\theta = \beta_k\lambda_2(\mathcal{L})$ yields
\begin{align}
    \| \tilde{\mathbf{w}}_{k+1} \|_2^2
    &\leq  (1 + \beta_k\lambda_2(\mathcal{L})) (1 - \beta_k\lambda_2(\mathcal{L}))  \| \tilde{\mathbf{w}}_k \|_2^2
     + \alpha_k^2 \left( 1 + \frac{1}{\beta_k\lambda_2(\mathcal{L})} \right) \left( \| \mathbf{g}(\mathbf{w}_k,\bm{\xi}_k) \|_2 + n\upsilon_0 \sigma_{\max}(\mathcal{L}) \right)^2 \\
    &\leq (1 - \beta^2_k\lambda_2(\mathcal{L})^2)  \| \tilde{\mathbf{w}}_k \|_2^2  + 2\alpha_k^2 \left( \frac{1 + \beta_k\lambda_2(\mathcal{L})}{\beta_k\lambda_2(\mathcal{L})} \right)  \left( \| \mathbf{g}(\mathbf{w}_k,\bm{\xi}_k) \|_2^2 + \left( n\upsilon_0 \sigma_{\max}(\mathcal{L}) \right)^2 \right)
\end{align}
Now taking the expectation yields
\begin{align}\label{wTildeE2}
\begin{split}
   \mathbb{E} \left[ \| \tilde{\mathbf{w}}_{k+1} \|_2^2 \right]
    &\leq   (1 - \beta^2_k\lambda_2(\mathcal{L})^2) \mathbb{E} \left[ \| \tilde{\mathbf{w}}_k \|_2^2 \right]
      + 2\alpha_k^2 \left( \frac{1 + \beta_k\lambda_2(\mathcal{L})}{\beta_k\lambda_2(\mathcal{L})} \right) \left( \mathbb{E} \left[ \| \mathbf{g}(\mathbf{w}_k,\bm{\xi}_k) \|_2^2 \right] + \left( n\upsilon_0 \sigma_{\max}(\mathcal{L}) \right)^2
    \right)
\end{split}
\end{align}
Using Proposition \ref{Assump:BoundedGrad}, \eqref{wTildeE2} can be written as
\begin{align}\label{wTildeE2a}
\begin{split}
   \mathbb{E} \left[ \| \tilde{\mathbf{w}}_{k+1} \|_2^2 \right]
    &\leq (1 - \beta^2\lambda_2(\mathcal{L})^2) \mathbb{E} \left[ \| \tilde{\mathbf{w}}_k \|_2^2 \right]  + 2\frac{\alpha_k^2}{\beta_k} \left( \frac{ \left(1 + \beta_k\lambda_2(\mathcal{L})\right)}{\lambda_2(\mathcal{L})} \right)
    \left( \mu_g + \left( n\upsilon_0 \sigma_{\max}(\mathcal{L}) \right)^2 \right)
\end{split}
\end{align}
Note that
\begin{align}
    2\left( \displaystyle \frac{ \left(1 + \beta_k\lambda_2(\mathcal{L})\right)}{\lambda_2(\mathcal{L})} \right)\left( \mu_g + \left( n\upsilon_0 \sigma_{\max}(\mathcal{L}) \right)^2 \right)
    \leq 2\left( \displaystyle \frac{ \left(1 + b\lambda_2(\mathcal{L})\right)}{\lambda_2(\mathcal{L})} \right) \left( \mu_g + \left( n\upsilon_0 \sigma_{\max}(\mathcal{L}) \right)^2 \right)
\end{align}
and for some constant $\mu_a > 0$, we have
\begin{align}
    2\left( \displaystyle \frac{ \left(1 + b\lambda_2(\mathcal{L})\right)}{\lambda_2(\mathcal{L})} \right) \left( \mu_g + \left( n\upsilon_0 \sigma_{\max}(\mathcal{L}) \right)^2 \right) \leq \mu_a
\end{align}
 Let $r_1(k) = \beta^2_k\lambda_2(\mathcal{L})^2 = \displaystyle\frac{b^2\lambda_2(\mathcal{L})^2}{(k+1)^{2\delta_1}}$ and $r_2(k) = \displaystyle2\frac{\alpha_k^2}{\beta_k} \left( \frac{ \left(1 + \beta_k\lambda_2(\mathcal{L})\right)}{\lambda_2(\mathcal{L})} \right)
    \left( \mu_g + \left( n\upsilon_0 \sigma_{\max}(\mathcal{L}) \right)^2 \right) \leq \frac{a^2\mu_a/b}{(k+1)^{2\delta_2-\delta_1}}$. Now \eqref{wTildeE2a} can be written in the form of \eqref{Eqn:lamma2} with  $\epsilon_1 = 2\delta_1$ and $\epsilon_2 = 2\delta_2-\delta_1$. Thus it follows from Lemma \ref{Lemma:Kar} that
\begin{align}
    (k+1)^{\delta_0}\, \mathbb{E} \left[ \| \tilde{\mathbf{w}}_k \|_2^2 \right] \rightarrow 0
    \quad \textnormal{as}\quad k\rightarrow \infty, \quad \forall\,0 \leq \delta_0 < 2\delta_2-3\delta_1.
\end{align}
Thus there exists a constant $0 < \mu_{w} < \infty$ such that for all $k\geq 0$
\begin{align}\label{Eq:MSbound1}
    \mathbb{E} \left[ \| \tilde{\mathbf{w}}_k \|_2^2 \right] \leq \mu_w \frac{1}{(k+1)^{\delta_0}}, \quad \forall\,0 \leq \delta_0 < 2\delta_2-3\delta_1.
\end{align}
Now \eqref{Eq:MSbound1a} follows from Assumption~\ref{Assump:AlphaBeta} that $\delta_2 > 3\delta_1$.


\section{Proof of Theorem \ref{Theorem:InfSum}}\label{Appdx:C}
From \eqref{dVk} we have
\begin{align}
&\nabla V(\gamma_k,\mathbf{w}_{k+1}) - \nabla V(\gamma_k,\mathbf{w}_k) = \nabla F(\,\mathbf{w}_{k+1}\,) - \nabla F(\,\mathbf{w}_k\,)  + \frac{1}{\gamma_k}  \left(\mathcal{L}\otimes I_{d_w}\right) \left({\mathbf{w}}_{k+1} -  {\mathbf{w}}_k \right)
\end{align}
Now based on Assumption \ref{Assump:Lipz}, for a fixed $\gamma_k$, $\nabla V(\gamma_k,\,\mathbf{w}\,)$ is Lipschitz continuous in $\mathbf{w}$. Thus we have
\begin{align}
\begin{split}
  \| \nabla V(\gamma_k,\mathbf{w}_{k+1}) - &\nabla V(\gamma_k,\mathbf{w}_k)\|_2  \leq  \left( L + \frac{\sigma_{\max}(\mathcal{L})}{\gamma_k} \right) \|\mathbf{w}_{k+1}-\mathbf{w}_k\|_2 \label{Eq:dV00}
\end{split}
\end{align}
It follows from Lemma~\ref{Lemma:Lipz} that
\begin{align}
\begin{split}
V(\gamma_k,\mathbf{w}_{k+1}) - V(\gamma_k,\mathbf{w}_k)
  &\leq \left( \nabla F(\mathbf{w}_k) + \frac{1}{\gamma_k} \left(\mathcal{L}\otimes I_{d_w}\right)\mathbf{w}_k \right)^\top \left( \mathbf{w}_{k + 1} - \mathbf{w}_{k} \right)   \\
  &  \qquad\qquad + \frac{1}{2}\left( L + \frac{\sigma_{\max}(\mathcal{L})}{\gamma_k} \right) \| \mathbf{w}_{k + 1} - \mathbf{w}_{k} \|_2^2
\end{split}\label{Eq:dV}
\end{align}
Note that the distributed SGD algorithm in \eqref{Eq:DSG1a} can be rewritten as
\begin{align}\label{Eq:DSG2}
  \mathbf{w}_{k + 1} - \mathbf{w}_{k} &= - \alpha_k \left( \mathbf{g}(\mathbf{w}_k,\bm{\xi}_k) + \frac{1}{\gamma_k} \left(\mathcal{L}\otimes I_{d_w}\right)\mathbf{w}_k
  - \frac{1}{\gamma_k} \left(\mathcal{L}\otimes I_{d_w}\right)\mathbf{e}_k  \right) .
\end{align}
Substituting \eqref{Eq:DSG2} into \eqref{Eq:dV} and taking the conditional expectation $\mathbb{E}_{\xi}\left[ \,\cdot\, \right]$ yields
\begin{align}
\begin{split}
  &\mathbb{E}_{\xi}\left[V(\gamma_k,\mathbf{w}_{k+1})\right] - V(\gamma_k,\mathbf{w}_k) \leq   - \alpha_k \left( \nabla F(\mathbf{w}_k) + \frac{1}{\gamma_k} \left(\mathcal{L}\otimes I_{d_w}\right)\mathbf{w}_k \right)^\top \\
  &\qquad\qquad\qquad\qquad\qquad\qquad\qquad\qquad\qquad \times\left( \mathbb{E}_{\xi}\left[ \mathbf{g}(\mathbf{w}_k,\bm{\xi}_k) \right] + \frac{1}{\gamma_k} \left(\mathcal{L}\otimes I_{d_w}\right)\mathbf{w}_k - \frac{1}{\gamma_k} \left(\mathcal{L}\otimes I_{d_w}\right)\mathbf{e}_k \right)\\
  &\qquad\quad\qquad\qquad+ \frac{\alpha_k^2}{2}\left( L + \frac{\sigma_{\max}(\mathcal{L})}{\gamma_k} \right) \mathbb{E}_{\xi}\left[ \left\| \mathbf{g}(\mathbf{w}_k,\bm{\xi}_k) + \frac{1}{\gamma_k} \left(\mathcal{L}\otimes I_{d_w}\right)\mathbf{w}_k
  - \frac{1}{\gamma_k} \left(\mathcal{L}\otimes I_{d_w}\right)\mathbf{e}_k \right\|_2^2 \right]  \quad \textnormal{a.s.}
\end{split}\label{Eq:dV0}
\end{align}
Based on Assumption \ref{Assump:Grad1}, there exists $\mu > 0$ such that
\begin{align*}
  \left( \nabla F(\mathbf{w}_k) + \frac{1}{\gamma_k} \left(\mathcal{L}\otimes I_{d_w}\right)\mathbf{w}_k \right)^\top  \bigg(  \mathbb{E}_{\xi}\left[\, \mathbf{g}(\mathbf{w}_k,\bm{\xi}_k) \,\right] +  \left.\frac{1}{\gamma_k} \left(\mathcal{L}\otimes I_{d_w}\right)\mathbf{w}_k \right)
  \geq \mu \left\|  \nabla F(\mathbf{w}_k) + \frac{1}{\gamma_k} \left(\mathcal{L}\otimes I_{d_w}\right)\mathbf{w}_k \right\|^2_2,\,\, \textnormal{a.s.}&
  \end{align*}
Also note that
\begin{align*}
\left\| \mathbf{g}(\mathbf{w}_k,\bm{\xi}_k) + \frac{1}{\gamma_k} \left(\mathcal{L}\otimes I_{d_w}\right)\mathbf{w}_k
  - \frac{1}{\gamma_k} \left(\mathcal{L}\otimes I_{d_w}\right)\mathbf{e}_k \right\|_2^2 \leq
  2\left\| \mathbf{g}(\mathbf{w}_k,\bm{\xi}_k) + \frac{1}{\gamma_k} \left(\mathcal{L}\otimes I_{d_w}\right)\mathbf{w}_k \right\|_2^2
  + 2\left\| \frac{1}{\gamma_k} \left(\mathcal{L}\otimes I_{d_w}\right)\mathbf{e}_k \right\|_2^2&
\end{align*}
Thus we have
\begin{align}
\begin{split}
  &\mathbb{E}_{\xi}\left[V(\gamma_k,\mathbf{w}_{k+1})\right] - V(\gamma_k,\mathbf{w}_k) \leq   \alpha_k^2 \left( L + \frac{\sigma_{\max}(\mathcal{L})}{\gamma_k} \right)
  \mathbb{E}_{\xi}\left[ \| \mathbf{g}(\mathbf{w}_k,\bm{\xi}_k) + \frac{1}{\gamma_k} \left(\mathcal{L}\otimes I_{d_w}\right)\mathbf{w}_k \|_2^2 \right]\\
  & + \alpha_k^2 \left( L + \frac{\sigma_{\max}(\mathcal{L})}{\gamma_k} \right) \left\| \frac{1}{\gamma_k} \left(\mathcal{L}\otimes I_{d_w}\right)\mathbf{e}_k \right\|_2^2 - \alpha_k \mu \left\|  \nabla F(\mathbf{w}_k) + \frac{1}{\gamma_k} \left(\mathcal{L}\otimes I_{d_w}\right)\mathbf{w}_k \right\|^2_2\\
  & + \beta_k \left( \nabla F(\mathbf{w}_k) + \frac{1}{\gamma_k} \left(\mathcal{L}\otimes I_{d_w}\right)\mathbf{w}_k \right)^\top\left(\mathcal{L}\otimes I_{d_w}\right)\mathbf{e}_k \qquad \textnormal{a.s.}
\end{split}\label{Eq:dV1}
\end{align}
Let
\begin{equation}\label{ck}
  c_k \triangleq \left( \alpha_k L + \sigma_{\max}(\mathcal{L})\beta_k \right).
\end{equation}
Now \eqref{Eq:dV1} can be written as
\begin{align}
\begin{split}
  &\mathbb{E}_{\xi}\left[V(\gamma_k,\mathbf{w}_{k+1})\right] - V(\gamma_k,\mathbf{w}_k) \leq   \alpha_k c_k
  \mathbb{E}_{\xi}\left[ \| \mathbf{g}(\mathbf{w}_k,\bm{\xi}_k) + \frac{1}{\gamma_k} \left(\mathcal{L}\otimes I_{d_w}\right)\mathbf{w}_k \|_2^2 \right]\\
  & + \alpha_k c_k \left\| \frac{1}{\gamma_k} \left(\mathcal{L}\otimes I_{d_w}\right)\mathbf{e}_k \right\|_2^2 - \alpha_k \mu \left\|  \nabla F(\mathbf{w}_k) + \frac{1}{\gamma_k} \left(\mathcal{L}\otimes I_{d_w}\right)\mathbf{w}_k \right\|^2_2\\
  & + \beta_k \left( \nabla F(\mathbf{w}_k) + \frac{1}{\gamma_k} \left(\mathcal{L}\otimes I_{d_w}\right)\mathbf{w}_k \right)^\top\left(\mathcal{L}\otimes I_{d_w}\right)\mathbf{e}_k \qquad \textnormal{a.s.}
\end{split}\label{Eq:dV1a}
\end{align}
Based on Assumptions~\ref{Assump:Grad1} and \ref{Assump:Grad2}, there exists scalars ${\mu}_{v_1} \geq 0$ and ${\mu}_{v_2} \geq 0$ such that
\begin{align}
\begin{split}
&\mathbb{E}_{\xi} \left[ \| \mathbf{g}(\mathbf{w}_k,\bm{\xi}_k) + \frac{1}{\gamma_k} \left(\mathcal{L}\otimes I_{d_w}\right)\mathbf{w}_k \|_2^2 \right] \leq
\mu_{v_1} + \mu_{v_2} \left\|  \nabla F(\mathbf{w}_k) + \frac{1}{\gamma_k} \left(\mathcal{L}\otimes I_{d_w}\right)\mathbf{w}_k \right\|^2_2 \quad \textnormal{a.s.}
\end{split}\label{Eq:2ndGrad1}
\end{align}
Thus from \eqref{Eq:dV1a} we have
\begin{align}
\begin{split}
  &\mathbb{E}_{\xi} \left[ V(\gamma_k,\mathbf{w}_{k+1}) \right] - V(\gamma_k,\mathbf{w}_k) \leq \left( c_k \mu_{v_2} -  \mu \right)\alpha_k
  \left\|  \nabla F(\mathbf{w}_k) + \frac{1}{\gamma_k} \left(\mathcal{L}\otimes I_{d_w}\right)\mathbf{w}_k \right\|^2_2 + c_k\alpha_k\mu_{v_1} \\
  &+ \alpha_k c_k \left\| \frac{1}{\gamma_k} \left(\mathcal{L}\otimes I_{d_w}\right)\mathbf{e}_k \right\|_2^2
  + \beta_k \left( \nabla F(\mathbf{w}_k) + \frac{1}{\gamma_k} \left(\mathcal{L}\otimes I_{d_w}\right)\mathbf{w}_k \right)^\top \left(\mathcal{L}\otimes I_{d_w}\right)\mathbf{e}_k \quad \textnormal{a.s.}
\end{split}\label{Eq:dV2}
\end{align}
Note that
\begin{align}
    \alpha_k c_k \left\| \frac{1}{\gamma_k} \left(\mathcal{L}\otimes I_{d_w}\right)\mathbf{e}_k \right\|_2^2 &\leq \frac{\alpha_k c_k}{\gamma_k^2} \sigma_{\max}(\mathcal{L}) \left\| \mathbf{e}_k \right\|_2^2 \leq n^2\upsilon_0^2 \alpha_k \beta^2_k c_k \sigma_{\max}(\mathcal{L}),
\end{align}
and
\begin{align}
    \beta_k \nabla F^\top(\mathbf{w}_k) \left(\mathcal{L}\otimes I_{d_w}\right)\mathbf{e}_k &\leq \beta_k \sigma_{\max}(\mathcal{L}) \| \nabla F(\mathbf{w}_k) \|_2 \left\| \mathbf{e}_k \right\|_2
    \leq \alpha_k \beta_k n \upsilon_0 \sigma_{\max}(\mathcal{L}) \| \nabla F(\mathbf{w}_k) \|_2.
\end{align}
Thus based on Assumption \ref{Assump:AlphaBeta} we have $\alpha_k \beta_k$ is a summable sequence and therefore the above two terms are summable. Also note that
\begin{align}
    \frac{\beta_k}{\gamma_k} \mathbf{w}_k^\top \left(\mathcal{L}\otimes I_{d_w}\right) \left(\mathcal{L}\otimes I_{d_w}\right)\mathbf{e}_k &\leq \frac{\beta_k^2 }{\alpha_k} \sigma_{\max}(\mathcal{L})^2 \| \tilde{\mathbf{w}}_k \|_2 \| \mathbf{e}_k \|_2
\end{align}
From \eqref{Eq:MSbound1} we have
\begin{align}\label{Eq:MSbound1b}
    \mathbb{E} \left[ \| \tilde{\mathbf{w}}_k \|_2 \right]^2 \leq \mathbb{E} \left[ \| \tilde{\mathbf{w}}_k \|_2^2 \right] \leq \mu_w \frac{1}{(k+1)^{\delta_0}}, \quad \forall\,0 \leq \delta_0 < 2\delta_2-3\delta_1.
\end{align}
Thus we have
\begin{align}
    \frac{\beta_k}{\gamma_k} \mathbb{E} \left[ \mathbf{w}_k^\top \left(\mathcal{L}\otimes I_{d_w}\right) \left(\mathcal{L}\otimes I_{d_w}\right)\mathbf{e}_k \right] &\leq
    \frac{\beta_k^2 \sigma_{\max}(\mathcal{L})^2 }{\alpha_k}   \frac{\sqrt{\mu_w}}{(k+1)^{\delta_0/2}}  \frac{a\upsilon_0 n}{(k+1)^{\delta_2}}\\
    &=      \frac{\upsilon_0 n \sqrt{\mu_w}\sigma_{\max}(\mathcal{L})^2 b^2}{(k+1)^{\delta_2 + \delta_0/2 - \delta_2 + 2\delta_1}}
\end{align}
Thus from Assumption \ref{Assump:AlphaBeta} we have $\alpha_k \sqrt{\beta_k}$ is a summable and therefore $\frac{\beta_k}{\gamma_k} \mathbb{E} \left[ \mathbf{w}_k^\top \left(\mathcal{L}\otimes I_{d_w}\right) \left(\mathcal{L}\otimes I_{d_w}\right)\mathbf{e}_k \right]$ is also summable.

Substituting $\nabla V(\gamma_k, \mathbf{w}_k) = \nabla F(\mathbf{w}_k) + \displaystyle\frac{1}{\gamma_k} \left(\mathcal{L}\otimes I_{d_w}\right)\mathbf{w}_k$ and taking the total expectation of \eqref{Eq:dV2} yields
\begin{align}
\begin{split}
  &\mathbb{E}\left[ V(\gamma_k,\mathbf{w}_{k+1}) \right] - \mathbb{E}\left[ V(\gamma_k,\mathbf{w}_k)\right] \leq - \left( \mu - c_k \mu_{v_2} \right)\alpha_k  \mathbb{E}\left[ \left\|  \nabla V(\gamma_k, \mathbf{w}_k)  \right\|^2_2 \right] + \eta_k, 
\end{split}\label{Eq:dV3}
\end{align}
where $\eta_k$ denotes the reminding terms in \eqref{Eq:dV2} and we have already shown that $\eta_k$ is a summable sequence.

Note that
\begin{align}
\begin{split}
  &V(\gamma_{k+1},\mathbf{w}_{k+1}) = V(\gamma_k,\mathbf{w}_{k+1})  + \frac{1}{2} \left( \gamma_{k+1}^{-1} - \gamma_{k}^{-1} \right) \mathbf{w}^\top_{k+1} \left(\mathcal{L}\otimes I_{d_w}\right)\mathbf{w}_{k+1}
\end{split}\label{Eq:deltaV}
\end{align}
Combining \eqref{Eq:dV3} and \eqref{Eq:deltaV} yields
\begin{align}
\begin{split}
  \mathbb{E}\left[ V(\gamma_{k+1},\mathbf{w}_{k+1}) \right] - \mathbb{E}\left[ V(\gamma_k,\mathbf{w}_k)\right] \leq
  &- \left( \mu - c_k \mu_{v_2} \right)\alpha_k  \mathbb{E}\left[ \left\|  \nabla V(\gamma_k, \mathbf{w}_k)  \right\|^2_2 \right]  + \eta_k \\
  & + \frac{1}{2} \left( \gamma_{k+1}^{-1} - \gamma_{k}^{-1} \right) \mathbb{E}\left[ \mathbf{w}^\top_{k+1} \left(\mathcal{L}\otimes I_{d_w}\right)\mathbf{w}_{k+1} \right]
\end{split}\label{Eq:dV4}
\end{align}
\noindent If we select $\epsilon = \delta_2-\delta_1$, it follows directly from Lemma~\ref{Lemma10} that
  \begin{equation}
      \gamma_{k+1}^{-1} - \gamma_{k}^{-1} \leq \frac{2b\left(\delta_2-\delta_1\right)}{a} (k+1)^{\delta_2-\delta_1-1}.
  \end{equation}
Note that from Lemma~\ref{Lemma2} we have
\begin{align}    \mathbf{w}^\top_{k+1} \left(\mathcal{L}\otimes I_{d_w}\right)\mathbf{w}_{k+1} =
    \tilde{\mathbf{w}}^\top_{k+1} \left(\mathcal{L}\otimes I_{d_w}\right)\tilde{\mathbf{w}}_{k+1}
    \leq \sigma_{\max}\left( \mathcal{L} \right) \| \tilde{\mathbf{w}}_{k+1} \|_2^2.\end{align} Thus
\begin{align}
    &\frac{1}{2} \left( \gamma_{k+1}^{-1} - \gamma_{k}^{-1} \right) \mathbb{E}\left[ \mathbf{w}^\top_{k+1} \left(\mathcal{L}\otimes I_{d_w}\right)\mathbf{w}_{k+1} \right]
    \leq  \frac{2b\left(\delta_2-\delta_1\right)}{a}  (k+1)^{\delta_2-\delta_1-1} \sigma_{\max}\left( \mathcal{L} \right)  \mathbb{E}\left[ \| \tilde{\mathbf{w}}_{k+1} \|_2^2 \right]
\end{align}
We have established in \eqref{Eq:MSbound1} that for all $k\geq 0$
\begin{align}
    \mathbb{E} \left[ \| \tilde{\mathbf{w}}_k \|_2^2 \right] \leq \mu_w \frac{1}{(k+1)^{\delta_0}}, \quad \forall\,0 \leq \delta_0 < 2\delta_2-3\delta_1.
\end{align}
Therefore we have
\begin{align}
    &\frac{1}{2} \left( \gamma_{k+1}^{-1} - \gamma_{k}^{-1} \right) \mathbb{E}\left[ \mathbf{w}^\top_{k+1} \left(\mathcal{L}\otimes I_{d_w}\right)\mathbf{w}_{k+1} \right]
    \leq \frac{2b\left(\delta_2-\delta_1\right)}{a} (k+1)^{\delta_2-\delta_1-1} \sigma_{\max}\left( \mathcal{L} \right)  \mu_w \frac{1}{(k+1)^{\delta_0}}
\end{align}
Let $\mu_c = \frac{2b\left(\delta_2-\delta_1\right)}{a} \sigma_{\max}\left( \mathcal{L} \right)  \mu_w$. Now selecting $\delta_0 = 2\delta_2-3\delta_1-\varepsilon$, where $0 < \varepsilon \ll \delta_1 $, yields
\begin{align}
    \frac{1}{2} \left( \gamma_{k+1}^{-1} - \gamma_{k}^{-1} \right) \mathbb{E}\left[ \mathbf{w}^\top_{k+1} \left(\mathcal{L}\otimes I_{d_w}\right)\mathbf{w}_{k+1} \right]
    &\leq
    \mu_c (k+1)^{\delta_2-\delta_1-1-2\delta_2+3\delta_1+\varepsilon} \\&= \mu_c (k+1)^{-\delta_2+2\delta_1-1+\varepsilon}
\end{align}
Thus if we select $\delta_1$ and $\delta_2$ such that $\delta_2 > 2\delta_1 + \varepsilon$, then we have
\begin{equation}
    \frac{1}{2} \left( \gamma_{k+1}^{-1} - \gamma_{k}^{-1} \right) \mathbb{E}\left[ \mathbf{w}^\top_{k+1} \left(\mathcal{L}\otimes I_{d_w}\right)\mathbf{w}_{k+1} \right]
    \leq  \mu_c \frac{1}{(k+1)^{1+\varepsilon_1}},
\end{equation}
where $\varepsilon_1 > 0$ and $\delta_2 - 2\delta_1 - \varepsilon = \varepsilon_1$. Now we can write \eqref{Eq:dV4} as
\begin{align}
\begin{split}
  &\mathbb{E}\left[ V(\gamma_{k+1},\mathbf{w}_{k+1}) \right] - \mathbb{E}\left[ V(\gamma_k,\mathbf{w}_k)\right] \leq - \left( \mu - c_k \mu_{v_2} \right)
  \alpha_k \mathbb{E}\left[ \left\|  \nabla V(\gamma_k, \mathbf{w}_k)  \right\|^2_2 \right] + \eta_k +  \frac{\mu_c}{(k+1)^{1+\varepsilon_1}}
\end{split}\label{Eq:dV5}
\end{align}
Since $c_k$ is decreasing to zero, for sufficiently large $k$, we have $c_k \mu_{v_2} < \mu/2$. Therefore $\left( \mu - c_k \mu_{v_2} \right) > \frac{1}{2} \mu$ for sufficiently large $k$. Thus we have
\begin{align}
\begin{split}
  &\mathbb{E}\left[ V(\gamma_{k+1},\mathbf{w}_{k+1}) \right] - \mathbb{E}\left[ V(\gamma_k,\mathbf{w}_k)\right] \leq - \frac{1}{2} \mu\alpha_k \mathbb{E}\left[ \left\|  \nabla V(\gamma_k, \mathbf{w}_k)  \right\|^2_2 \right] + \eta_k   +  \frac{\mu_c}{(k+1)^{1+\varepsilon_1}}
\end{split}\label{Eq:dV6}
\end{align}
Now \eqref{Eq:dV6} can be written in the form of \eqref{Eq:Robbins} after selecting $a_k = 0$,
\begin{align}
    w_k &= \eta_k   +  \frac{\mu_c}{(k+1)^{1+\varepsilon_1}},\\
    u_k &= \frac{1}{2} \mu\alpha_k \mathbb{E}\left[ \left\|  \nabla V(\gamma_k, \mathbf{w}_k)  \right\|^2_2 \right].
\end{align}
Note that here we have $a_k = 0$, $u_k\geq 0$ and $w_k\geq 0$ with $\sum\limits_{k=0}^{\infty}a_k < \infty $ and $\sum\limits_{k=0}^{\infty}w_k < \infty $. Therefore from Lemma~\ref{Lemma:Robbins} we have $\mathbb{E}\left[ V(\gamma_k,\mathbf{w}_k)\right]$ is a convergent sequence and  $$\sum\limits_{k=0}^{\infty} \,\frac{1}{2} \mu \alpha_k \mathbb{E}\left[ \left\|  \nabla V(\gamma_k,\mathbf{w}_k)  \right\|^2_2 \right] < \infty .$$

\section{Proof of Theorem \ref{Theorem:Convergence}}\label{Appdx:D}
Note that
    \begin{align}
      \left\| \mathbf{w}_{k + 1} - \mathbf{w}_{k} \right\|_2^2 &= \alpha_k^2  \left\| \mathbf{g}(\mathbf{w}_k,\bm{\xi}_k) + \frac{1}{\gamma_k} \left(\mathcal{L}\otimes I_{d_w}\right)\mathbf{w}_k
      - \frac{1}{\gamma_k} \left(\mathcal{L}\otimes I_{d_w}\right)\mathbf{e}_k \right\|_2^2  \\
      &\leq 2 \alpha_k^2  \left\| \mathbf{g}(\mathbf{w}_k,\bm{\xi}_k) + \frac{1}{\gamma_k} \left(\mathcal{L}\otimes I_{d_w}\right)\mathbf{w}_k \right\|_2^2 +
      2 \beta_k^2  \left\|  \left(\mathcal{L}\otimes I_{d_w}\right)\mathbf{e}_k \right\|_2^2 \\
      &\leq 2 \alpha_k^2  \left\| \mathbf{g}(\mathbf{w}_k,\bm{\xi}_k) + \frac{1}{\gamma_k} \left(\mathcal{L}\otimes I_{d_w}\right)\mathbf{w}_k \right\|_2^2 +
      2 \left(n\upsilon_0 \alpha_k \beta_k  \sigma_{\max}\left(\mathcal{L}\right) \right)^2\label{Eq:DSG3}
    \end{align}
    Now form \eqref{Eq:2ndGrad1}, using the tower rule yields
    \begin{align}
    \begin{split}
    &\mathbb{E} \left[ \| \mathbf{g}(\mathbf{w}_k,\bm{\xi}_k) + \frac{1}{\gamma_k} \left(\mathcal{L}\otimes I_{d_w}\right)\mathbf{w}_k \|_2^2 \right] \leq
    \mu_{v_1} + \mu_{v_2} \mathbb{E} \left[\left\|  \nabla F(\mathbf{w}_k) + \frac{1}{\gamma_k} \left(\mathcal{L}\otimes I_{d_w}\right)\mathbf{w}_k \right\|^2_2 \right]
    \end{split}\label{Eq:2ndGrad2}
    \end{align}
    Now taking the expectation of \eqref{Eq:DSG3} and substituting \eqref{Eq:2ndGrad2} yields
    \begin{align}\label{InEq:DSG1}
      \mathbb{E} \left[ \left\| \mathbf{w}_{k + 1} - \mathbf{w}_{k} \right\|_2^2 \right] &\leq 2\alpha_k^2 \mu_{v_1} +
      2 \left(n\upsilon_0 \alpha_k \beta_k  \sigma_{\max}\left(\mathcal{L}\right) \right)^2 + 2\alpha_k^2 \mu_{v_2} \mathbb{E} \left[\left\|  \nabla V(\mathbf{w}_k) \right\|^2_2 \right].
    \end{align}
    Thus we have
    \begin{align}\label{InEq:DSG2}
    \begin{split}
      \sum\limits_{k=0}^{\infty} \, \mathbb{E} \left[ \left\| \mathbf{w}_{k + 1} - \mathbf{w}_{k} \right\|_2^2 \right] &\leq 2\sum\limits_{k=0}^{\infty} \,\left(\alpha_k^2 \mu_{v_1} + \left(n\upsilon_0 \alpha_k \beta_k  \sigma_{\max}\left(\mathcal{L}\right) \right)^2\right) \\
      &\qquad \qquad + 2\sum\limits_{k=0}^{\infty} \, \left(\alpha_k^2 \mu_{v_2} \mathbb{E} \left[\left\|  \nabla V(\mathbf{w}_k) \right\|^2_2 \right]\right).
    \end{split}
    \end{align}
    Now \eqref{InEq:DSG3} follows from \eqref{Eqn:SummableGrad} and from noting that $\alpha_k$ is square summable. Furthermore, since every summable sequence is convergent, we have \eqref{ConvergenceEqn}.


\section{Proof of Theorem~\ref{Theorem:SummableGrad}}\label{Appdx:E}
Taking the conditional expectation $\mathbb{E}_{\xi}[\cdot]$ of \eqref{Eq:DSG2}  yields
\begin{align}\label{Eq:DSG2b}
  \mathbb{E}_{\xi}\left[\, \mathbf{w}_{k + 1}  - \mathbf{w}_{k} \,\right]
  &=  - \alpha_k \nabla V(\gamma_k,\mathbf{w}_k) + \beta_k \left(\mathcal{L}\otimes I_{d_w}\right)\mathbf{e}_k \quad \textnormal{a.s.}
\end{align}
Thus we have
\begin{align}
  \left\| \mathbb{E}_{\xi}\left[\, \mathbf{w}_{k + 1}  - \mathbf{w}_{k} \,\right] \right\|_2
  &\leq  \alpha_k  \left\| \nabla V(\gamma_k,\mathbf{w}_k) \right\|_2 + \beta_k \left\| \left(\mathcal{L}\otimes I_{d_w}\right)\mathbf{e}_k \right\|_2 \leq  \alpha_k  \left\| \nabla V(\gamma_k,\mathbf{w}_k) \right\|_2 + \beta_k \sigma_{\max}\left(\mathcal{L}\right) \left\| \mathbf{e}_k \right\|_2 \quad \textnormal{a.s.}
\end{align}
From the triggering condition \eqref{TriggCond} we have $\left\|\mathbf{e}(k)\right\|_1 < n\upsilon_0\alpha_k$. Thus we have
\begin{align}\label{Eq:DSG23}
  \left\| \mathbb{E}_{\xi}\left[\, \mathbf{w}_{k + 1}  - \mathbf{w}_{k} \,\right] \right\|^2_2
  &\leq  2\alpha_k^2  \left(\left\| \nabla V(\gamma_k,\mathbf{w}_k) \right\|^2_2 + n^2\upsilon_0^2 \sigma_{\max}\left(\mathcal{L}\right)^2 \beta_k^2 \right)\quad \textnormal{a.s.}
\end{align}
Therefore
\begin{align}\label{Eq:DSG2d}
  \mathbb{E}\left[ \left\|  \nabla V(\gamma_k,\mathbf{w}_k)  \right\|^2_2 \right] \geq \frac{1}{2} \alpha_k^{-2} \, \mathbb{E}\left[ \left\| \mathbb{E}_{\xi}\left[\, \mathbf{w}_{k + 1}  - \mathbf{w}_{k} \,\right] \right\|^2_2 \right] - n^2\upsilon_0^2 \sigma_{\max}\left(\mathcal{L}\right)^2 \beta_k^2
\end{align}
Multiplying by $\alpha_k$ and taking the summation yields
\begin{align}
  \sum\limits_{k=0}^{\infty} \, \alpha_k \mathbb{E}\left[ \left\|  \nabla V(\gamma_k,\mathbf{w}_k)  \right\|^2_2 \right] \geq \frac{1}{2} \sum\limits_{k=0}^{\infty} \,\alpha_k^{-1} \, \mathbb{E}\left[ \left\| \mathbb{E}_{\xi}\left[\, \mathbf{w}_{k + 1}  - \mathbf{w}_{k} \,\right] \right\|^2_2 \right] - n^2\upsilon_0^2 \sigma_{\max}\left(\mathcal{L}\right)^2 \sum\limits_{k=0}^{\infty} \,\alpha_k\beta_k^2
\end{align}
Since $\alpha_k\beta_k^2$ is summable, it follows from \eqref{Eqn:SummableGrad} that
\begin{equation}\label{Eqn:Summabledw}
    \sum\limits_{k=0}^{\infty} \, \alpha_k^{-1}\, \mathbb{E}\left[ \left\| \mathbb{E}_{\xi}\left[\, \mathbf{w}_{k + 1}  - \mathbf{w}_{k} \,\right] \right\|^2_2 \right] < \infty.
\end{equation}
Now note that $\bar{\mathbf{w}}_{k + 1} - \bar{\mathbf{w}}_{k}  =  \frac{1}{n} \left(\mathbf{1}_n \mathbf{1}_n^\top \otimes I_{d_w} \right) \left( \mathbf{w}_{k + 1} - \mathbf{w}_{k} \right)$. Thus
$\mathbb{E}_{\xi}\left[\ \bar{\mathbf{w}}_{k + 1} - \bar{\mathbf{w}}_{k} \right]  =  \frac{1}{n} \left(\mathbf{1}_n \mathbf{1}_n^\top \otimes I_{d_w} \right) \mathbb{E}_{\xi}\left[\ \mathbf{w}_{k + 1} - \mathbf{w}_{k}\right]$ a.s. and $\left\| \mathbb{E}_{\xi}\left[\ \bar{\mathbf{w}}_{k + 1} - \bar{\mathbf{w}}_{k} \right] \right\|_2 \leq  \left\| \mathbb{E}_{\xi}\left[\ \mathbf{w}_{k + 1} - \mathbf{w}_{k}\right] \right\|_2$ a.s.
Therefore it follows from \eqref{Eqn:Summabledw} that
\begin{equation}\label{Eqn:Summabledwbar}
    \sum\limits_{k=0}^{\infty} \, \alpha_k^{-1}\, \mathbb{E}\left[ \left\| \mathbb{E}_{\xi}\left[\, \bar{\mathbf{w}}_{k + 1}  - \bar{\mathbf{w}}_{k} \,\right] \right\|^2_2 \right] < \infty.
\end{equation}
From \eqref{Eq:DSG2b} we have
\begin{align}\label{Eq:DSGbar}
  \mathbb{E}_{\xi}\left[\, \bar{\mathbf{w}}_{k + 1}  - \bar{\mathbf{w}}_{k} \,\right] = \frac{1}{n} \left(\mathbf{1}_n \mathbf{1}_n^\top \otimes I_{d_w} \right)  \mathbb{E}_{\xi}\left[\, \mathbf{w}_{k + 1} - \mathbf{w}_{k} \,\right]
  &= - \alpha_k \overline{\nabla F} (\mathbf{w}_k) \quad \textnormal{a.s.}
\end{align}
Now substituting \eqref{Eq:DSGbar} into \eqref{Eqn:Summabledwbar} yields \eqref{Eqn:SummableDFbar}.

\section{Proof of Theorem \ref{Theorem:ConvergenceRate}}\label{Appdx:D}

\noindent From Theorem \ref{Theorem:InfSum} we have
\begin{align}
    &\sum_{k=0}^{K}\,\alpha_k \mathbb{E}\left[ \left\| \sum_{i=1}^n \nabla f_i(\bm{w}_i(k))  \right\|_2^2 \right] \leq C ,
\end{align}
for some $K > 0$ and some positive constant $C < \infty$. Now dividing both sides of this inequality by $\sum\limits_{k=0}^{K} \, \alpha_k\,$ yields
\begin{align}
    \frac{1}{\sum_{k=0}^{K} \, \alpha_k\,}\sum_{k=0}^{K} \, \alpha_k\, \mathbb{E}\left[ \left\| \sum_{i=1}^n \nabla f_i(\bm{w}_i(k))  \right\|_2^2 \right] \leq \frac{C}{\sum_{k=0}^{K} \, \alpha_k\,}.
\end{align}
Notice that
\begin{align}
    \sum\limits_{k=0}^{K} \, \alpha_k\, &= \sum\limits_{k=0}^{K} \, \frac{a}{(k+1)^{\delta_2}}\,\geq \int_0^K\,\frac{a}{(x+1)^{\delta_2}}\,dx.
\end{align}
Note $\int_0^K\,\frac{a}{(x+1)^{\delta_2}}\,dx = a\log(K+1)$ for $\delta_2 = 1$ and $\int_0^K\,\frac{a}{(x+1)^{\delta_2}}\,dx= \frac{a}{1-\delta_2}((K+1)^{1-\delta_2}-1)$ if $\delta_2 \in (0.5, \,\, 1)$. Thus when $\delta_2 = 1$, we have
\begin{align}
    \sum_{k=0}^{K} \,  \frac{\alpha_k}{\sum_{j=0}^{K} \, \alpha_j\,} \, \mathbb{E}\left[ \left\| \sum_{i=1}^n \nabla f_i(\bm{w}_i(k))  \right\|_2^2 \right] \leq \frac{\hat{C}}{\log(K+1)},
\end{align}
where $\hat{C} < \infty$ is a positive constant. We therefore can show a weak convergence result, i.e.,
\begin{align}
    \min_{k\in\{0,1,\ldots,K\}} \,\, \mathbb{E}\left[ \left\| \sum_{i=1}^n \nabla f_i(\bm{w}_i(k))  \right\|_2^2 \right] \overset{K\rightarrow \infty}{\longrightarrow}0.
\end{align}
Sample a parameter $\mathbf{z}^{\tiny{K}}$ from $\{ \mathbf{w}_k\}_{k=0}^{K}$ for $k=0,1,\ldots,K$ with probability $\mathbb{P}\left( \mathbf{z}^{\tiny{K}} = \mathbf{w}_k\right) = \frac{\alpha_k}{\sum_{j=0}^{K} \, \alpha_j\,}$.  This gives
\begin{align}
    \mathbb{E}\left[ \left\| \sum_{i=1}^n \nabla f_i(\bm{z}_i^K)  \right\|_2^2 \right] = \sum_{k=0}^{K} \,  \frac{\alpha_k}{\sum_{j=0}^{K} \, \alpha_j\,} \, \mathbb{E}\left[ \left\| \sum_{i=1}^n \nabla f_i(\bm{w}_i(k))  \right\|_2^2 \right].
\end{align}
Therefore for $\delta_2=1$ we have
\begin{align}
    \mathbb{E}\left[ \left\| \sum_{i=1}^n \nabla f_i(\bm{z}_i^K)  \right\|_2^2 \right]  = O \left(\frac{1}{\log(K+1)}\right),
\end{align}
and for $\delta_2 \in (0.5, \,\, 1)$ we have
\begin{align}
    \mathbb{E}\left[ \left\| \sum_{i=1}^n \nabla f_i(\bm{z}_i^K)  \right\|_2^2 \right]  =O \left(\frac{1}{(K+1)^{1-\delta_2}}\right).
\end{align}
This concludes the proof of Theorem \ref{Theorem:ConvergenceRate}.

\section{Proof of Theorem~\ref{Theorem:OptCond}}\label{Appdx:F}
Define $G(\mathbf{w}_k) \triangleq \left\| \overline{\nabla F} (\mathbf{w}_k) \right\|^2_2$. Thus we have
\begin{align}
    \nabla G(\mathbf{w}) &= 2\nabla^2 F(\mathbf{w}) \mathcal{J}\, \nabla F(\mathbf{w}),\label{GradG}
\end{align}
where $\mathcal{J}\, = \left( \frac{1}{n} \left(\mathbf{1}_n \mathbf{1}_n^\top \otimes I_{d_w} \right) \right)$ and $\mathcal{J}^2 = \mathcal{J}\,$. Since $F(\cdot)$ is twice continuously differentiable and $\nabla F(\cdot)$ is Liptschitz continuous with constant $L$, we have $ \nabla^2 F(\mathbf{w}) \leq L I_{nd_w}$. Therefore $\forall \,\mathbf{w}_a,\,\mathbf{w}_b\in\mathbb{R}^{n d_w}$,
\begin{align}
    \nabla G(\mathbf{w}_{a}) - \nabla G(\mathbf{w}_b) & = 2\nabla^2 F(\mathbf{w}_{a}) \mathcal{J}\, \nabla F(\mathbf{w}_{a}) - 2\nabla^2 F(\mathbf{w}_b) \mathcal{J}\nabla F(\mathbf{w}_b) +     2\nabla^2 F(\mathbf{w}_{a}) \mathcal{J}\, \nabla F(\mathbf{w}_{b})   - 2\nabla^2 F(\mathbf{w}_{a}) \mathcal{J}\, \nabla F(\mathbf{w}_{b}) \\
    & = 2\nabla^2 F(\mathbf{w}_{a}) \mathcal{J}\, \left(\nabla F(\mathbf{w}_{a}) - \nabla F(\mathbf{w}_{b}) \right) + 2\left( \nabla^2 F(\mathbf{w}_{a}) - \nabla^2 F(\mathbf{w}_{b}) \right) \mathcal{J}\, \nabla F(\mathbf{w}_{b})
\end{align}
Since $\nabla^2 F(\mathbf{w}_{a})$ is Lipschitz continuous with constant $L_H$, and $\nabla F(\mathbf{w}_{b}) \leq \mu_F$, we have
\begin{align}
    &\left\| \nabla G(\mathbf{w}_{a}) - \nabla G(\mathbf{w}_b) \right\|_2 \leq 2L^2 \left\| \mathbf{w}_{a} - \mathbf{w}_{b} \right\|_2   + 2\mu_F L_H \left\| \mathbf{w}_{a} - \mathbf{w}_{b} \right\|_2\leq L_G \left\| \mathbf{w}_{a} - \mathbf{w}_{b} \right\|_2,
\end{align}
where $L_G \geq 2L^2 + 2\mu_F L_H$. Thus $\nabla G(\mathbf{w})$ is Lipschitz continuous and from Lemma~\ref{Lemma:Lipz} we have
\begin{align}
    G(\mathbf{w}_{k+1}) \leq  G(\mathbf{w}_k) &+ \nabla G(\mathbf{w}_k)^\top \left( \mathbf{w}_{k+1} - \mathbf{w}_{k} \right) + \frac{1}{2}L_G \left\| \mathbf{w}_{k+1} - \mathbf{w}_{k} \right\|_2^2
\end{align}
Now substituting \eqref{GradG} and taking the conditional expectation $\mathbb{E}_{\xi}[\,\cdot\,]$ yields
\begin{align}
    \mathbb{E}_{\xi}\left[\,G(\mathbf{w}_{k+1}) \,\right] \leq  &G(\mathbf{w}_k) + 2\nabla F(\mathbf{w})^\top \mathcal{J}\, \nabla^2 F(\mathbf{w}) \mathbb{E}_{\xi}\left[\, \mathbf{w}_{k+1} - \mathbf{w}_{k} \,\right] + \frac{1}{2}L_G \mathbb{E}_{\xi}\left[\,\left\| \mathbf{w}_{k+1} - \mathbf{w}_{k} \right\|_2^2 \,\right]
\end{align}
Since $\nabla F(\mathbf{w})^\top \mathcal{J}\, = \nabla V(\gamma_k,\mathbf{w})^\top \mathcal{J}$, substituting \eqref{Eq:DSG2b} yields
\begin{align}
\begin{split}
   &\mathbb{E}_{\xi}\left[\,G(\mathbf{w}_{k+1}) \,\right]  \leq  G(\mathbf{w}_k)  + \frac{1}{2}L_G \mathbb{E}_{\xi}\left[\,\left\| \mathbf{w}_{k+1} - \mathbf{w}_{k} \right\|_2^2 \,\right] - 2\alpha_k \,\nabla V(\gamma_k,\mathbf{w})^\top \mathcal{J}\, \nabla^2 F(\mathbf{w}) \nabla V(\gamma_k,\mathbf{w})\\
    &\qquad\qquad\qquad\qquad\qquad\qquad\qquad\qquad \qquad\qquad + 2\beta_k \,\nabla V(\gamma_k,\mathbf{w})^\top \mathcal{J}\, \nabla^2 F(\mathbf{w}) \left(\mathcal{L}\otimes I_{d_w}\right)\mathbf{e}_k
    \end{split}\\
    &\leq G(\mathbf{w}_k)  + \frac{1}{2}L_G \mathbb{E}_{\xi}\left[\,\left\| \mathbf{w}_{k+1} - \mathbf{w}_{k} \right\|_2^2 \,\right] + 2\alpha_k L \left\| \nabla V(\gamma_k,\mathbf{w}) \right\|^2_2
    + \alpha_k\beta_k  \mu_{\upsilon}  \left\| \nabla V(\gamma_k,\mathbf{w}) \right\|_2 ,
\end{align}
where $\mu_{\upsilon} = 2 n\upsilon_0 L \sigma_{\max}(\mathcal{L}) $. Now taking the total expectation yields
\begin{align}\label{Eqn:DiffG}
\begin{split}
    \mathbb{E}\left[\,G(\mathbf{w}_{k+1}) \,\right]
    \leq \mathbb{E}\left[\, G(\mathbf{w}_k) \,\right] + \frac{1}{2}L_G \mathbb{E}\left[\,\left\| \mathbf{w}_{k+1} - \mathbf{w}_{k} \right\|_2^2 \,\right] &+ 2\alpha_k L \mathbb{E}\left[\,\left\| \nabla V(\gamma_k,\mathbf{w}) \right\|^2_2 \,\right]\\ &+ \alpha_k\beta_k  \mu_{\upsilon}  \mathbb{E}\left[\,\left\| \nabla V(\gamma_k,\mathbf{w}) \right\|_2 \,\right]
\end{split}
\end{align}
From \eqref{Eqn:SummableGrad} and \eqref{InEq:DSG3}, we know that $\alpha_k \displaystyle \mathbb{E}\left[\,\left\| \nabla V(\gamma_k,\mathbf{w}) \right\|^2_2 \,\right]$ and $\displaystyle \mathbb{E}\left[\,\left\| \mathbf{w}_{k+1} - \mathbf{w}_{k} \right\|_2^2 \,\right]$ are summable. Since $\alpha_k\sqrt{\beta_k}$ is summable and $\sqrt{\beta_k}\displaystyle\mathbb{E}\left[\,\left\| \nabla V(\gamma_k,\mathbf{w}) \right\|_2 \,\right]$ is bounded, \eqref{Eqn:DiffG} can be written in the form of \eqref{Eq:Robbins} and it follows from Lemma~\ref{Lemma:Robbins} that $\displaystyle \mathbb{E}\left[\, G(\mathbf{w}_k) \,\right]$ converges. Since $ \displaystyle \mathbb{E}\left[\, G(\mathbf{w}_k) \,\right] = \displaystyle \mathbb{E}\left[\, \left\| \overline{\nabla F} (\mathbf{w}_k) \right\|^2_2 \,\right]$ it follows from Theorem~\ref{Theorem:SummableGrad} that $\displaystyle\mathbb{E}\left[\, G(\mathbf{w}_k) \,\right]$ must converge to zero.

\end{document}